\newcommand{\e}{\varepsilon}
\newcommand{\Z}{\mathds{Z}}
\newcommand{\N}{\mathds{N}}
\newcommand{\R}{\mathds{R}}
\newcommand{\Pz}{\mathds{P}}
\newcommand{\q}{\quad}
\newtheorem{thm}{Theorem}[section]
\newtheorem{lem}[thm]{Lemma}
\newtheorem{kor}[thm]{Corollary}
\newtheorem{conj}[thm]{Conjecture}
\newtheorem{prop}[thm]{Proposition}
\newtheorem*{bp}{Bertrand's Postulate}
\newtheorem{thmE}{Theorem}
\newcounter{tmp}
\theoremstyle{remark}
\newtheorem*{rema}{Remark}
\theoremstyle{definition}
\newtheorem*{defi}{Definition}
\title{On generalized Ramanujan primes}
\author{Christian Axler}
\begin{document}

\maketitle

\begin{abstract}
In this paper we establish several results concerning the generalized Ramanujan primes. For $n\in\N$ and $k \in \R_{> 1}$ we give estimates for the $n$th
$k$-Ramanujan prime which lead both to generalizations and to improvements of the results presently in the literature. Moreover, we obtain results about the
distribution of $k$-Ramanujan primes. In addition, we find explicit formulae for certain $n$th $k$-Ramanujan primes. As an application, we prove that a
conjecture of Mitra, Paul and Sarkar \cite{mps} concerning the number of primes in certain intervals holds for every sufficiently large positive integer.
\end{abstract}

\section{Introduction}

Ramanujan primes, named for the Indian mathematician Srinivasa Ramanujan, were introduced by Sondow \cite{so3} in 2005 and have their origin in Bertrand's
postulate.

\begin{bp}
For each $n \in \mathds{N}$ there is a prime number $p$ with $n < p \leq 2n$.
\end{bp}

\noindent
Bertrand's postulate was proved, for instance, by Tchebychev \cite{tch} and by Erdös \cite{er2}. In 1919, Ramanujan \cite{ram} proved an extension of
Bertrand's postulate by showing that
\begin{displaymath}
\pi(x) - \pi \left( \frac{x}{2} \right) \geq 1 \; (\text{respectively} \; 2,3,4,5,\ldots)
\end{displaymath}
for every
\begin{displaymath}
x \geq 2 \; ( \text{respectively} \; 11,17,29,41, \ldots).
\end{displaymath}
Motivated by the fact $\pi(x) - \pi(x/2) \rightarrow \infty$ as $x \rightarrow \infty$ by the Prime Number Theorem (PNT), Sondow \cite{so3} defined the
number $R_n \in \N$ for each $n \in \N$ as the smallest positive integer such that the inequality $\pi(x) - \pi(x/2) \geq n$ holds for every $x \geq R_n$.
He called the number $R_n$ the \emph{$n$th Ramanujan prime}, because $R_n \in \Pz$ for every $n\in \N$, where $\Pz$ denotes the set of prime numbers.

This can be generalized as follows. Let $k \in (1,\infty)$. Again, the PNT implies that $\pi(x) - \pi(x/k) \to \infty$ as $x \to \infty$ and Shevelev
\cite{sh} introduced the $n$th $k$-Ramanujan prime as follows.

\begin{defi}
Let $k>1$ be real. For every $n \in \N$, let
\begin{displaymath}
R_n^{(k)} = \min \{ m \in \N \mid \pi(x) - \pi(x/k) \geq n \;\, \text{for every real} \; x \geq m \}.
\end{displaymath}
This number is prime and it is called the \emph{$n$th $k$-Ramanujan prime}. Since $R_n^{(2)} = R_n$ for every $n \in \N$, the numbers $R_n^{(k)}$ are also
called \emph{generalized Ramanujan primes}.
\end{defi}

\begingroup
\setcounter{tmp}{\value{thmE}}
\setcounter{thmE}{0}
\renewcommand\thethmE{\Alph{thmE}}

\noindent
In 2009, Sondow \cite{so} showed that
\begin{equation} \label{101}
R_n \sim p_{2n} \q\q (n \to \infty),
\end{equation}
where $p_n$ denotes the $n$th prime number. Further, he proved that
\begin{equation} \label{102}
R_n > p_{2n}
\end{equation}
for every $n \geq 2$. In 2011, Amersi, Beckwith, Miller, Ronan and Sondow \cite{abmrs} generalized the asymptotic formula \eqref{101} to $k$-Ramanujan
primes by showing that
\begin{equation} \label{103}
R_n^{(k)} \sim p_{\lceil kn/(k-1) \rceil} \q\q (n \to \infty).
\end{equation}
In view of \eqref{103}, one may ask whether the inequality \eqref{102} can also be generalized to $k$-Ramanujan primes. We prove that this is indeed the
case. In fact, we derive further inequalities concerning the $n$th $k$-Ramanujan prime, by constructing explicit constants $n_0, n_1, n_2, n_3 \in \N$
depending on a series of parameters including $k$ (see \eqref{312}, \eqref{333}, Theorem \ref{t404}, Theorem \ref{t503}, respectively), such that the
following theorems hold.

\begin{thmE}[see Theorem 3.1] \label{t001}
Let $t \in \Z$ with $t > - \lceil k/(k-1) \rceil$. Then for every $n \geq n_0$,
\begin{equation} \label{104}
R_n^{(k)} > p_{\lceil kn/(k-1) \rceil + t}.
\end{equation}
\end{thmE}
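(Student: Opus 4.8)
The plan is to turn the statement into the search for a single counterexample point. Write $N=\lceil kn/(k-1)\rceil$. By the definition of $R_n^{(k)}$ as the least $m$ for which $\pi(x)-\pi(x/k)\ge n$ holds for all $x\ge m$, an integer $m$ has this property if and only if $m\ge R_n^{(k)}$; hence $R_n^{(k)}>p_{N+t}$ is equivalent to the existence of some real $x\ge p_{N+t}$ with $\pi(x)-\pi(x/k)<n$. I would produce such a point explicitly, taking the candidate $x=p_{N+t}$. Since $\pi(p_{N+t})=N+t$ and, using the identity $\lceil kn/(k-1)\rceil=n+\lceil n/(k-1)\rceil$, the inequality $\pi(x)-\pi(x/k)<n$ at $x=p_{N+t}$ is exactly
\[
\pi\!\left(\frac{p_{N+t}}{k}\right)\ge\left\lceil\frac{n}{k-1}\right\rceil+t+1 .
\]
Everything then reduces to establishing this single inequality for $n\ge n_0$.

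To bound the left-hand side from below I would feed explicit prime estimates into one another. First a Dusart-type lower bound such as $p_m\ge m(\ln m+\ln\ln m-1)$ gives a lower bound for $p_{N+t}/k$; then an explicit lower bound of the shape $\pi(y)\ge\frac{y}{\ln y}\bigl(1+\frac1{\ln y}\bigr)$, valid once $y$ exceeds a fixed constant (absorbed into $n_0$), is applied at $y=p_{N+t}/k$. Both bounds are increasing in their arguments, so their composition yields a completely explicit elementary lower bound for $\pi(p_{N+t}/k)$ as a function of $n$ and $k$, and it remains to check that this function is at least $\lceil n/(k-1)\rceil+t+1$.

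The heart of the matter, and the main obstacle, is that this comparison is genuinely second-order: inserting $N\approx kn/(k-1)$ shows that the leading terms of the two sides coincide, both being $\sim n/(k-1)$ — this is precisely the asymptotic \eqref{103} — so the main term carries no information and the sign of the difference is decided by the secondary term. A short expansion shows that the surplus $\pi(p_{N+t}/k)-\lceil n/(k-1)\rceil-t$ is governed by a positive term of order $\frac{m\ln k}{(k-1)\ln m}\sim\frac{kn\ln k}{(k-1)^2\ln n}$, which tends to infinity because $k>1$; hence for every fixed admissible $t$ the required inequality holds once $n$ is large, and solving the explicit inequality $\frac{m\ln k}{(k-1)\ln m}>t$ for $n$ (while carefully tracking the $\ln\ln$ contributions and the rounding in both ceilings) is what fixes the threshold recorded in \eqref{n_0}. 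Finally, the restriction $t>-\lceil k/(k-1)\rceil$ merely delimits the interesting range: using $\lceil a\rceil-\lceil b\rceil\le\lceil a-b\rceil$ one sees that $t\le-\lceil k/(k-1)\rceil$ forces $N+t\le\lceil k(n-1)/(k-1)\rceil$, so there the claim already follows from the case applied to $n-1$ together with the monotonicity $R_{n-1}^{(k)}\le R_n^{(k)}$, and needs no separate argument.
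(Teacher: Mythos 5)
Your proposal is correct in substance, and its first move coincides with the paper's: Theorem \ref{t301} also takes the single witness $x=p_{\lceil kn/(k-1)\rceil+t}$ and reduces \eqref{104} to a lower bound for $\pi\bigl(p_{\lceil kn/(k-1)\rceil+t}/k\bigr)$. Where you genuinely diverge is in how that lower bound is produced. The paper never invokes an estimate for $p_M$ at all: it posits a parametric lower bound $\pi(x)>x/(\log x-1-A(x))+r$ (valid for $x\geq Y_r$, with $r=(t+1)(k-1)/k$) and a parametric upper bound $\pi(x)<x/(\log x-1-B(x))$ (valid for $x\geq X_0$), and imposes the single extra hypothesis \eqref{309}, $\log k-B(kx)+A(x)\geq 0$, which says precisely that the lower-bound denominator at $x$ is at most the upper-bound denominator at $kx$. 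This yields $\pi(x)>\pi(kx)/k+r$ for all $x\geq X_2/k$, and evaluating at $x=p_{\lceil kn/(k-1)\rceil+t}/k$, where $\pi(kx)=\lceil kn/(k-1)\rceil+t$ is known \emph{exactly}, finishes in two lines; the ``genuinely second-order'' cancellation you rightly identify as the heart of the matter is absorbed wholesale into \eqref{309} -- the surplus is exactly the $\log k$ gained in the denominator -- and no $\log\log$ bookkeeping ever appears. Your route instead composes Dusart's bound $p_m\geq m(\log m+\log\log m-1)$ with $\pi(y)\geq(y/\log y)(1+1/\log y)$; this does work (the surplus comes out to $(1+o(1))\,m\log k/(k\log m)$, positive for every $k>1$), but it is delicate: with the cruder $\pi(y)\geq y/\log y$ the second-order term is about $(\log k-1)m/(k\log m)$, which has the wrong sign for $k<e$, and with $p_m\geq m\log m$ a term $-m\log\log m/(k\log m)$ ruins it for every $k$, so exactly the two refined bounds you chose are needed -- this is worth making explicit if you write the argument out. (Your stated order $m\log k/((k-1)\log m)$ has the harmless inaccuracy that the constant should be $\log k/k$.) What the paper's formulation buys is modularity and clean constants: the same $(A,B)$-scheme is re-instantiated to prove Proposition \ref{p305}, Theorem \ref{t307} and Theorem \ref{t318}, and the threshold is the closed form \eqref{n_0}, $n_0=\tfrac{k-1}{k}(\pi(X_2)-t+1)$, whereas yours emerges from solving a transcendental inequality with $\log\log$ terms. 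Your closing observation about $t\leq-\lceil k/(k-1)\rceil$ is correct but tangential; in the paper that hypothesis is simply what keeps the index $\lceil kn/(k-1)\rceil+t$ positive for all $n\geq 1$.
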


\noindent
Another problem which arises is to find a minimal bound $m = m(k,t)$ such that the inequality \eqref{104} holds for all $n \geq m$. For the case $t=0$, we
introduce the following

\begin{defi}
For $k > 1$ let
\begin{equation} \label{Nk05}
N(k) = \min \{ m \in \N \mid R_n^{(k)} > p_{\lceil kn/(k-1) \rceil} \; \text{for every} \; n \geq m\}.
\end{equation}
\end{defi}

\noindent
In Section 3.2 we prove the following theorem giving an explicit formula for $N(k)$.

\begin{thmE}[see Corollary 3.11]
If $k \geq 745.8$, then
\begin{displaymath}
N(k) = \pi(3k) - 1.
\end{displaymath}
\end{thmE}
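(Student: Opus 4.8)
The plan is to prove the two inequalities $N(k)\le \pi(3k)-1$ and $N(k)\ge\pi(3k)-1$ separately. Throughout write $f(x)=\pi(x)-\pi(x/k)$ and use the elementary reformulation: for a prime $p$ one has $R_n^{(k)}>p$ if and only if there is some $x\ge p$ with $f(x)<n$. Indeed $R_n^{(k)}$ is by definition the least $m$ with $f\ge n$ on $[m,\infty)$, and $f$ is a step function that increases only at primes and decreases only at the points $kq$ with $q$ prime, so its local minima sit at the points $kp_j$ with value $\pi(kp_j)-j$.

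First I would settle the lower bound $N(k)\ge\pi(3k)-1$ by showing that the inequality \eqref{104} with $t=0$ \emph{fails} at $n:=\pi(3k)-2$. The key observation is that $\pi(3)=2$, so $f(3k)=\pi(3k)-\pi(3)=\pi(3k)-2=n$ exactly. I would then prove $f(x)\ge\pi(3k)-2$ for all $x\ge 3k$; by the location of the local minima this reduces to $\pi(kp_j)-\pi(3k)\ge j-2$ for every $j\ge 2$, i.e.\ the dilated interval $(3k,kp_j]$ contains at least as many primes as $(3,p_j]$. Together with the immediate estimate $f(x)=\pi(x)-1\ge\pi(3k)-2$ on $[p_{\pi(3k)-1},3k)$ (where $\pi(x/k)=1$), this yields $f\ge n$ on $[p_{\pi(3k)-1},\infty)$, hence $R_n^{(k)}\le p_{\pi(3k)-1}$. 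Finally, since $n\ge 1$ and $k>1$ give $kn/(k-1)>n$, we get $\lceil kn/(k-1)\rceil\ge n+1=\pi(3k)-1$, so $p_{\lceil kn/(k-1)\rceil}\ge p_{\pi(3k)-1}\ge R_n^{(k)}$ and \eqref{104} fails at this $n$, forcing $N(k)\ge\pi(3k)-1$.

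For the upper bound $N(k)\le\pi(3k)-1$ I must show $R_n^{(k)}>p_{\lceil kn/(k-1)\rceil}$ for every $n\ge\pi(3k)-1$. By the reformulation it suffices, for each such $n$, to exhibit a witness $x\ge p_L$ (with $L=\lceil kn/(k-1)\rceil$) satisfying $f(x)<n$. I would take $x=kp_r$ where $r=\min\{i:kp_i\ge p_L\}$, so that $x\ge p_L$ automatically and $f(x)=\pi(kp_r)-r$, reducing the claim to the explicit inequality $\pi(kp_r)<n+r$. At the boundary $n=\pi(3k)-1$ one checks directly that $L=\pi(3k)$, $r=2$ and $x=3k$, recovering $f(3k)=\pi(3k)-2<n$; for larger $n$ the same witness is controlled by the explicit bounds on $\pi(x)$ and $p_n$ collected in Section~2. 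This is the case $t=0$ of Theorem~\ref{t001}, here sharpened so that the validity threshold can be pushed all the way down to $\pi(3k)-1$.

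The main obstacle is the uniformity of the prime estimates in both directions: the no-dip claim $f(x)\ge\pi(3k)-2$ for all $x\ge 3k$ in the lower bound, and the witness inequality $\pi(kp_r)<n+r$ for all $n\ge\pi(3k)-1$ in the upper bound. For large $j$ (respectively large $n$) both are comfortable, since the prime count of the $k$-dilated interval dwarfs the comparison count; but over the moderate ranges the margins are thin, and it is precisely the hypothesis $k\ge 745.8$ that renders the available explicit bounds for $\pi(x)$ sharp enough to close these gaps. I expect the bulk of the write-up to be the careful bookkeeping required to verify these two inequalities uniformly from the constants assembled earlier.
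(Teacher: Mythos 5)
Your lower-bound half ($N(k)\ge\pi(3k)-1$) is essentially the paper's own argument (Theorem \ref{t311}): you make the defining inequality fail at $n=\pi(3k)-2$ by showing $R_{\pi(3k)-2}^{(k)}\le p_{\pi(3k)-1}\le p_{\lceil k(\pi(3k)-2)/(k-1)\rceil}$, and your reduction of the no-dip claim to $\pi(kp_j)-\pi(3k)\ge j-2$ for all $j\ge 2$ is correct. The paper proves exactly this (for every $k>1$, not just $k\ge 745.8$) via the Ishikawa-type inequality $\pi(m)+\pi(n)\le\pi(mn/3)$ of Proposition \ref{p310}, which rests on explicit Rosser--Schoenfeld/Trost bounds plus a finite verification. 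Leaving that inequality as ``careful bookkeeping'' is a deferral rather than a proof, but the plan is sound.

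The upper-bound half, however, contains a genuine error. Your witness $x=kp_r$ with $r=\min\{i:kp_i\ge p_L\}$ does not satisfy $f(x)<n$ beyond the single boundary case $n=\pi(3k)-1$. Take $n=\pi(3k)$, so $L=n+1$ (using $\pi(3k)+1\le k$), $p_L$ is the first prime exceeding $3k$, and $r=3$: your reduction then demands $\pi(5k)<\pi(3k)+3$, i.e.\ at most two primes in $(3k,5k]$ --- absurd for $k\ge 745.8$, where that interval contains on the order of $2k/\log(5k)\approx 180$ primes. The source of the error is your structural claim that the minima of $f$ sit at the points $kp_j$: true for local minima on the whole line, but on the ray $[p_L,\infty)$ the infimum can also be attained at the left endpoint, since on $[p_L,kp_r)$ one has $f(x)=\pi(x)-(r-1)$, minimized at $x=p_L$. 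It is precisely this endpoint value that works: $f(p_L)=L-\pi(p_L/k)$, which for $n=\pi(3k)$ equals $\pi(3k)+1-2<n$ because $p_L/k>3$. This is what the paper does in Theorem \ref{t307}: for $n\in\{\pi(3k)-1,\pi(3k)\}$ it uses $R_n^{(k)}>p_{n+1}$ (inequality \eqref{311}, bootstrapped from $f(3k)=\pi(3k)-2$ by strict monotonicity and primality of the sequence $R_n^{(k)}$) together with $\lceil kn/(k-1)\rceil=n+1$; and for $n\ge\pi(3k)+1$ it takes the witness $x=p_L$ and proves $\pi(p_L)-\pi(p_L/k)<n$ analytically via Theorem \ref{t301}, with the explicit choices $A(x)=-7.1/\log x$, $B(x)=1.17/\log x$ and the verification that $k\widetilde r(k)\le 3k$ for $k\ge 745.8$. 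Your upper bound must be rebuilt around the endpoint witness $p_L$ (the content then being a lower bound for $\pi(p_L/k)$, which is where the hypothesis $k\ge 745.8$ and the explicit $\pi$-estimates actually enter); as written, the inequality $\pi(kp_r)<n+r$ you reduce to is false throughout the critical range just above $\pi(3k)-1$ and only becomes true again for very large $n$.
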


\noindent
Theorem \ref{t001} is supplemented by the following upper bound for $n$th $k$-Ramanujan prime.

\begin{thmE}[see Theorem 3.21] \label{thm056}
Let $\e_1\geq 0$, $\e_2 \geq 0$ and $\e_1+\e_2 \neq 0$. Then for every $n \geq n_1$,
\begin{displaymath}
R_n^{(k)} \leq (1+\e_1)p_{\lceil (1+\e_2)kn/(k-1) \rceil}.
\end{displaymath}
\end{thmE}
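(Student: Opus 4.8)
The plan is to convert the asserted inequality into a statement about the prime counting function and then verify it by explicit estimates. Put $m := \lceil (1+\e_2)kn/(k-1) \rceil$. By the definition of $R_n^{(k)}$ it suffices to show that
\[
\pi(x) - \pi(x/k) \geq n \qquad \text{for every } x \geq (1+\e_1)p_m,
\]
since then $(1+\e_1)p_m$ is an admissible threshold and $R_n^{(k)} \leq (1+\e_1)p_m$ follows. Here $R_n^{(k)}$ being a prime, hence an integer, lets one pass from the real threshold to the conclusion; the at-most-unit gap between $(1+\e_1)p_m$ and the minimizing integer threshold is harmless, being dwarfed by the order-$n$ surplus obtained below (so that in fact the slightly smaller threshold $\lfloor (1+\e_1)p_m \rfloor$ already works).

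First I would fix explicit estimates of the shape $\pi(t) \geq L(t)$ for $t \geq x_0$ and $\pi(t) \leq U(t)$ for $t \geq y_0$, each comparable to $\tfrac{t}{\log t}\left(1 + \tfrac{1}{\log t}\right)$ with an explicit third-order correction. Setting $f(x) := L(x) - U(x/k)$, one has $\pi(x) - \pi(x/k) \geq f(x)$ whenever $x \geq \max(x_0, k y_0)$, and the leading term of $f$ equals $\tfrac{k-1}{k}\cdot\tfrac{x}{\log x}$. A short computation of $f'$ shows $f'(x) > 0$ for all sufficiently large $x$, so $f$ is increasing beyond an explicit point; hence it is enough to verify $f\bigl((1+\e_1)p_m\bigr) \geq n$.

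In the decisive step I would substitute an explicit upper bound for $p_m$ of the form $p_m \leq m(\log m + \log\log m)$ together with $m \geq (1+\e_2)kn/(k-1)$, and use $\log\bigl((1+\e_1)p_m\bigr) = \log m + O(\log\log m)$. After simplification the dominant term of $f\bigl((1+\e_1)p_m\bigr)$ becomes $(1+\e_1)(1+\e_2)n$. Since $\e_1, \e_2 \geq 0$ with $\e_1 + \e_2 \neq 0$, the factor $(1+\e_1)(1+\e_2)$ is strictly greater than $1$, so this dominant term exceeds the target $n$ by a surplus of order $n$.

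The crux of the argument — and the point at which $n_1$ is pinned down — is the explicit control of all error terms in this last step. I would collect the contributions coming from the correction terms in $L$ and $U$, from the bound on $p_m$, from the approximation $\log\bigl((1+\e_1)p_m\bigr) \approx \log m$, and from the ceiling in the definition of $m$, and bound their total by an explicit function of $n$ (depending on $k,\e_1,\e_2$) that is eventually dominated by the linear surplus $\bigl((1+\e_1)(1+\e_2)-1\bigr)n$. Defining $n_1$ as the least integer from which this domination holds — chosen large enough that $(1+\e_1)p_m$ also exceeds $\max(x_0, k y_0)$ and the monotonicity threshold — closes the proof. The main obstacle is thus bookkeeping rather than conceptual: making every error term explicit and uniform for $x \geq (1+\e_1)p_m$ so that a clean closed form for $n_1$ emerges.
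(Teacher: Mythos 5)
Your skeleton coincides with the paper's own proof of this statement (Theorem \ref{t321}): the same reduction via the definition of $R_n^{(k)}$, the same sandwich $\pi(x)-\pi(x/k) \geq L(x)-U(x/k)$ built from an explicit lower bound for $\pi(x)$ and upper bound for $\pi(x/k)$ (this is the paper's $\Upsilon_k$, Proposition \ref{p319}), the same monotonicity claim for that bound beyond an explicit point (Proposition \ref{p320}), and the same reduction to the single evaluation $f\bigl((1+\e_1)p_m\bigr) \geq n$; your handling of the integer-threshold subtlety is also fine.

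However, the decisive step as you state it does not go through. You propose to lower-bound $f\bigl((1+\e_1)p_m\bigr)$, whose main term is $\tfrac{k-1}{k}\cdot\tfrac{(1+\e_1)p_m}{\log((1+\e_1)p_m)}$, by substituting the \emph{upper} bound $p_m \leq m(\log m + \log\log m)$. Substituting an upper bound for $p_m$ into this increasing expression can only produce an \emph{upper} bound on the main term — it shows the term is at most about $(1+\e_1)(1+\e_2)n$, not at least. The relation $\log\bigl((1+\e_1)p_m\bigr) = \log m + O(\log\log m)$ does control the denominator, but for the numerator it only gives $p_m \geq m(\log m)^{-O(1)}$, which is far too weak. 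The missing ingredient is an explicit \emph{lower} bound of the correct order, e.g. Rosser's theorem $p_m > m\log m$; with it your error accounting does close, since the resulting $O(n\log\log n/\log n)$ losses are dominated by the linear surplus $\bigl((1+\e_1)(1+\e_2)-1\bigr)n$. It is worth noting that the paper sidesteps this $\log\log$ bookkeeping entirely: since $p_t > X_0$, the upper estimate $\pi(y) < y/(\log y - 1 - b_1/\log y)$ applied at $y = p_t$ gives $t = \pi(p_t) < p_t/(\log p_t - 1 - b_1/\log p_t)$, i.e. $p_t > t(\log p_t - 1 - b_1/\log p_t)$ (this is the role of $X_{11}$); inserting this into the numerator lets the logarithmic factors cancel against the denominator up to a ratio controlled by one explicit constant ($T$), so no asymptotics for $\log p_t$ in terms of $\log t$ are ever needed.
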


\noindent
By \cite{abmrs}, there exists a positive constant $\beta_1 = \beta_1(k)$ such that for every sufficiently large $n$,
\begin{equation} \label{106}
| R_n^{(k)} - p_{\lfloor kn/(k-1) \rfloor} | < \beta_1 n \log \log n.
\end{equation}
In Theorem \ref{t001}, we actually obtain a lower bound for $R_n^{(k)} - p_{\lceil kn/(k-1) \rceil}$ improving the lower bound given in \eqref{106}. The
next theorem yields an improvement of the upper bound.

\begin{thmE}[see Theorem 4.4]
There exists a positive constant $\gamma$, depending on a series of parameters including $k$, such that for every $n \geq n_2$,
\begin{displaymath}
R_n^{(k)} - p_{\lceil kn/(k-1) \rceil} < \gamma n.
\end{displaymath}
\end{thmE}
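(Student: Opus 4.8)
The plan is to bypass the upper bound of Theorem~\ref{thm056}, which for any admissible pair $\e_1,\e_2$ only produces a discrepancy of order $n\log n$, and instead to locate $R_n^{(k)}$ directly by means of sharp explicit estimates for the prime-counting function; this is what will replace the factor $\log\log n$ in \eqref{105} by a constant. Writing $m=\lceil kn/(k-1)\rceil$, the inequality $R_n^{(k)}-p_m<\gamma n$ will follow once an explicit upper bound for $R_n^{(k)}$ and an explicit lower bound for $p_m$ are shown to differ by only a linear amount.

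First I would fix two-term Dusart-type inequalities, say $\pi(x)\ge \frac{x}{\log x}\bigl(1+\tfrac{1}{\log x}\bigr)$ and $\pi(x)\le \frac{x}{\log x}\bigl(1+\tfrac{1}{\log x}+\tfrac{2}{\log^2 x}\bigr)$ valid beyond an effective threshold, and combine them into a lower bound
\[
\pi(x)-\pi(x/k)\ \ge\ h(x),
\]
where $h$ is an explicit function that is increasing for all sufficiently large $x$ and tends to infinity. Letting $u=u(n)$ be the unique solution of $h(u)=n$ beyond that threshold, monotonicity gives $\pi(x)-\pi(x/k)\ge n$ for every real $x\ge u$, hence for every integer $x\ge\lceil u\rceil$, and therefore $R_n^{(k)}\le\lceil u\rceil$ by the defining minimality of $R_n^{(k)}$.

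Next I would invert $h(u)=n$ with explicit control of the error. Since $h(x)\sim \frac{x(k-1)}{k\log x}$, a standard iteration of the logarithm yields
\[
u \;=\; \frac{kn}{k-1}\Bigl(\log\tfrac{kn}{k-1}+\log\log\tfrac{kn}{k-1}+C_1\Bigr)+o(n),
\]
where $C_1$ is an explicit constant produced by the secondary terms of $h$. On the other hand, with $m=\frac{kn}{k-1}+O(1)$ the classical lower bound $p_m\ge m(\log m+\log\log m-1)$ gives
\[
p_m \;\ge\; \frac{kn}{k-1}\Bigl(\log\tfrac{kn}{k-1}+\log\log\tfrac{kn}{k-1}-1\Bigr)+o(n).
\]
Subtracting, the two leading terms $\frac{kn}{k-1}\log\frac{kn}{k-1}$ and, crucially, the two secondary terms $\frac{kn}{k-1}\log\log\frac{kn}{k-1}$ cancel, leaving
\[
R_n^{(k)}-p_m \;\le\; \lceil u\rceil - p_m \;\le\; \frac{k}{k-1}(C_1+1)\,n + o(n).
\]
Taking $\gamma$ slightly larger than $\frac{k}{k-1}(C_1+1)$ and letting $n_2$ absorb every effective threshold then yields $R_n^{(k)}-p_m<\gamma n$ for all $n\ge n_2$.

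I expect the main obstacle to be making the inversion step fully effective: solving the transcendental equation $h(u)=n$ with a remainder that is genuinely $O(n)$ after the subtraction, and in particular verifying that the $\log\log$ contributions to $u$ and to $p_m$ agree up to $o(n)$ rather than $O(n\log\log n)$. This cancellation is exactly where the two-term estimates for $\pi$ are indispensable---the single-term bound underlying Theorem~\ref{thm056} reproduces only the leading term and hence cannot beat the $n\log\log n$ of \eqref{105}---and it is also where the explicit value of $\gamma$ and the threshold $n_2$ are pinned down.
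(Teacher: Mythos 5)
Your proposal is correct in outline, and its key insight --- use two-term prime-counting estimates so that the $\log\log$ contributions cancel, leaving a genuinely linear discrepancy --- is exactly the insight the paper exploits; but your mechanism is genuinely different from the paper's. You bound $R_n^{(k)}$ by building a monotone explicit minorant $h$ of $\pi(x)-\pi(x/k)$, solving the transcendental equation $h(u)=n$, inverting it effectively, and then subtracting Dusart's bound $p_m\ge m(\log m+\log\log m-1)$ \cite{pd3}. The paper never inverts anything: in Proposition \ref{p401} it substitutes $x=R_n^{(k)}$ into two-sided Dusart bounds and uses the identity $\pi(R_n^{(k)})-\pi(R_n^{(k)}/k)=n$ (Proposition \ref{p207}) to bound the pivot quantity $\frac{kn}{k-1}\log R_n^{(k)}-R_n^{(k)}$ from below; then in the proof of Theorem \ref{t404} it uses the coarse multiplicative bound $R_n^{(k)}\le(1+\e_1)p_{\lceil kn/(k-1)\rceil}$ (Theorem \ref{t321}) only to convert $\log R_n^{(k)}$ into $\log p_{\lceil kn/(k-1)\rceil}$ up to an additive constant, after which the Rosser--Schoenfeld upper bound \eqref{436} played against Dusart's lower bound for $p_m$ produces precisely the $\log\log$ cancellation you describe. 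What your route buys: it is self-contained (it does not need Theorem \ref{t321} at all, which the paper invokes twice, once in Proposition \ref{p401} and once in Theorem \ref{t404}), and it transparently pins down the optimal constant --- with any Dusart-type pair one finds $C_1=\frac{\log k}{k-1}-1$, so your $\gamma$ can approach $\frac{k\log k}{(k-1)^2}$, which is also the limit of the paper's \eqref{gamma} as all the $\e$'s and $\delta$'s tend to $0$. What the paper's route buys is exactly the removal of your acknowledged obstacle: evaluating inequalities at the point $R_n^{(k)}$ instead of solving $h(u)=n$ makes every threshold and the constant $\gamma$ fully explicit, with no $o(n)$ remainders left to effectivize. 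Two details to tighten if you carry your plan out: the definition of $R_n^{(k)}$ quantifies over all real $x\ge m$, so conclude $R_n^{(k)}\le\lceil u\rceil$ from the inequality holding for all real $x\ge u$ (checking integers alone does not suffice); and you should verify $C_1+1>0$ (true, since $C_1+1=\frac{\log k}{k-1}$) so that your $\gamma$ is indeed a positive constant as the statement requires.
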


\noindent
Let $\pi_k(x)$ be the number of $k$-Ramanujan primes less than or equal to $x$. Using PNT, Amersi, Beckwith, Miller, Ronan and Sondow \cite{abmrs} proved
that there exists a positive constant $\beta_2 = \beta_2(k)$ such that for every sufficiently large $n$,
\begin{equation} \label{107}
\left | \frac{k-1}{k} - \frac{\pi_k(n)}{\pi(n)} \right| \leq \frac{\beta_2 \log \log n}{\log n}.
\end{equation}
In Section 4 we prove the following two theorems which lead to an improvement of the lower and upper bound in \eqref{107}.

\begin{thmE}[see Proposition 5.1]
If $x \geq R_{N(k)}^{(k)}$ with $N(k)$ defined above, then
\begin{displaymath}
\frac{\pi_k(x)}{\pi(x)} < \frac{k-1}{k}.
\end{displaymath}
\end{thmE}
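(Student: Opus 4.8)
The plan is to translate the statement about the counting function $\pi_k$ directly into the defining inequality of $N(k)$. First I would fix $x \geq R_{N(k)}^{(k)}$ and set $m = \pi_k(x)$, the number of $k$-Ramanujan primes not exceeding $x$. Using that the sequence $(R_n^{(k)})_{n \geq 1}$ is strictly increasing, so that $\pi_k(x) = m$ is equivalent to $R_m^{(k)} \leq x < R_{m+1}^{(k)}$, I obtain two facts at once: that $R_m^{(k)} \leq x$, and that $m \geq N(k)$, since $x \geq R_{N(k)}^{(k)}$ forces at least $N(k)$ of the $k$-Ramanujan primes to lie below $x$.

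Next I would invoke the definition of $N(k)$. Because $m \geq N(k)$, it gives $R_m^{(k)} > p_{\lceil km/(k-1) \rceil}$. Here the crucial point is that $R_m^{(k)}$ is itself a prime: a prime strictly larger than $p_{\lceil km/(k-1) \rceil}$ must be at least the next prime, so $R_m^{(k)} \geq p_{\lceil km/(k-1) \rceil + 1}$. Combining this with $R_m^{(k)} \leq x$ yields $x \geq p_{\lceil km/(k-1) \rceil + 1}$, whence $\pi(x) \geq \lceil km/(k-1) \rceil + 1 > km/(k-1)$.

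Finally I would rearrange: from $\pi(x) > km/(k-1)$ and $m = \pi_k(x)$ one gets $(k-1)\pi(x) > k\,\pi_k(x)$, which is exactly $\pi_k(x)/\pi(x) < (k-1)/k$. The delicate point — and the reason the conclusion is strict rather than merely $\leq$ — is the upgrade from $R_m^{(k)} > p_{\lceil km/(k-1)\rceil}$ to $R_m^{(k)} \geq p_{\lceil km/(k-1)\rceil+1}$; without exploiting the primality of $R_m^{(k)}$ one would only reach $\pi(x) \geq \lceil km/(k-1)\rceil \geq km/(k-1)$, which fails to be strict precisely when $km/(k-1)$ is an integer. The other ingredient I rely on is the strict monotonicity of $(R_n^{(k)})_n$, needed to identify the index of the largest $k$-Ramanujan prime below $x$ with $\pi_k(x)$; this is what lets the single defining inequality of $N(k)$ do all the work.
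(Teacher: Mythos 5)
Your proof is correct and follows essentially the same route as the paper: identify $m=\pi_k(x)$ via the strict monotonicity of $(R_n^{(k)})_n$, note $m \geq N(k)$, and apply the defining inequality of $N(k)$ to get $\pi(x) > km/(k-1)$. In fact you make explicit a detail the paper glosses over — the upgrade from $R_m^{(k)} > p_{\lceil km/(k-1)\rceil}$ to $R_m^{(k)} \geq p_{\lceil km/(k-1)\rceil+1}$ via primality of $R_m^{(k)}$, which is exactly what guarantees strictness when $km/(k-1)$ is an integer.
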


\begin{thmE}[see Theorem 5.3]
There exists a positive constant $c$, depending on a series of parameters including $k$, such that for every $x \geq n_3$,
\begin{displaymath}
\frac{k-1}{k} - \frac{\pi_k(x)}{\pi(x)} < \frac{c}{\log x}.
\end{displaymath}
\end{thmE}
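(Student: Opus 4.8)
The plan is to reduce the claimed bound on $\frac{k-1}{k} - \frac{\pi_k(x)}{\pi(x)}$ to a lower bound for $\pi_k(x)$, and to obtain the latter from the sharp upper estimate $R_n^{(k)} - p_{\lceil kn/(k-1)\rceil} < \gamma n$ supplied by the preceding theorem. First I would fix $x \geq n_3$ (with $n_3$ to be determined) and set $n = \pi_k(x)$. Since the sequence $(R_n^{(k)})_{n\ge 1}$ is strictly increasing, this means $R_n^{(k)} \le x < R_{n+1}^{(k)}$. Writing $M = \lceil k(n+1)/(k-1)\rceil$ and applying the $\gamma$-estimate at the index $n+1$ gives $x < R_{n+1}^{(k)} < p_M + \gamma(n+1)$, hence $p_M > x - \gamma(n+1)$.

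Next I would convert this into a lower bound for $M$ in terms of $\pi(x)$. From $p_M > x - \gamma(n+1)$ one gets $\pi(x - \gamma(n+1)) < M$, so
\[
M > \pi(x) - \bigl(\pi(x) - \pi(x - \gamma(n+1))\bigr).
\]
The crucial observation is that the additive error $\gamma(n+1)$ is only of relative size $O(1/\log x)$: since $M \ge k(n+1)/(k-1)$ forces $n+1 \le \tfrac{k-1}{k}M$, and $p_M \asymp M\log M \asymp x$, one has $\gamma(n+1) = O(x/\log x)$. Consequently the interval $(x - \gamma(n+1), x]$ has length $O(x/\log x)$ and lies above $x/2$ for large $x$, so an explicit short-interval prime bound of the form $\pi(x) - \pi(y) \le C(x-y)/\log x$ shows it contains at most $O(x/\log^2 x) = O(\pi(x)/\log x)$ primes. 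This yields $M > \pi(x)\bigl(1 - O(1/\log x)\bigr)$.

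Finally, $M = \lceil k(n+1)/(k-1)\rceil \le k(n+1)/(k-1) + 1$ gives $n+1 > \tfrac{k-1}{k}(M-1)$, so that
\[
\pi_k(x) = n > \frac{k-1}{k}\,\pi(x)\Bigl(1 - O\bigl(\tfrac{1}{\log x}\bigr)\Bigr) - O(1).
\]
Rearranging and dividing by $\pi(x)$ — and using $\pi(x) \gg x/\log x$, so that the $O(1)$ term contributes $O(1/\pi(x)) \ll 1/\log x$ — produces a bound of the shape $\tfrac{k-1}{k} - \pi_k(x)/\pi(x) < c/\log x$ for a constant $c$ depending on $k$, $\gamma$ and the remaining parameters. (That the left-hand side is in fact nonnegative for $x \ge R_{N(k)}^{(k)}$ is the content of the preceding theorem, but is not needed here, since a negative left-hand side satisfies the inequality trivially.)

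The main obstacle is entirely in making everything explicit: one must pin down a concrete $n_3$ and constant $c$, which requires inserting explicit versions of each ingredient — an explicit upper estimate for $p_M$ in terms of $M$, an explicit short-interval prime bound valid for all $x \ge n_3$, and the explicit values of $\gamma$ and the threshold $n_2$ inherited from the earlier theorem — and then checking that the accumulated error stays below $c/\log x$ from $n_3$ onwards. The asymptotic argument sketched above is routine; the genuine work lies in this uniform bookkeeping and in choosing the parameters so as to keep $c$ reasonably small.
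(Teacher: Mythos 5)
Your argument is correct in substance and rests on exactly the same two pillars as the paper's own proof of Theorem \ref{t503}: the estimate $R_m^{(k)} < p_{\lceil mk/(k-1)\rceil} + \gamma m$ from Theorem \ref{t404}, and a Montgomery--Vaughan short-interval prime bound (which the paper packages as Lemma \ref{l502}). The arrangement, however, is genuinely different. The paper first proves the bound at prime arguments $x = p_n$: it chooses $m$ with $\lceil mk/(k-1)\rceil = n$, deduces $R_m^{(k)} < p_n + \gamma n$, hence $\pi_k(p_n) \geq m - (\pi(p_n + \gamma n) - \pi(p_n))$, counts primes in the interval \emph{above} $p_n$, turns the resulting $1/m$ term into $O(1/\log p_n)$, and finally extends to arbitrary $x$ via Bertrand's postulate. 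You instead work at general $x$ directly, set $n = \pi_k(x)$, apply the $\gamma$-estimate at index $n+1$, and count primes in the interval \emph{below} $x$, namely $(x - \gamma(n+1), x]$. This buys two simplifications: you never need to invert the map $m \mapsto \lceil mk/(k-1)\rceil$ (which is in fact a sore point in the paper's write-up: for $k = 2$ the value $\lceil 2m \rceil$ is always even, so odd $n$ have no preimage and the paper's choice of $m$ requires a small repair), and you avoid the separate Bertrand step at the end. One link in your chain does need fixing: you justify $\gamma(n+1) = O(x/\log x)$ by asserting $p_M \asymp M \log M \asymp x$, but at that point you only know the lower bound $p_M > x - \gamma(n+1)$, so this reasoning is circular. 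The repair is immediate and does not disturb anything else: since every $k$-Ramanujan prime is prime, $n = \pi_k(x) \leq \pi(x) = O(x/\log x)$, which gives both the required bound on the interval length and, at the end, the absorption of the $O(1)$ term via $1/\pi(x) = O(1/\log x)$.
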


\noindent
In 2009, Mitra, Paul and Sarkar \cite{mps} stated a conjecture concerning the number of primes in certain intervals, namely that
\begin{displaymath}
\pi(mn) - \pi(n) \geq m-1
\end{displaymath}
for all $m,n \in \N$ with $n \geq \lceil 1.1\log(2.5m) \rceil$. In Section 5 we confirm the conjecture for large $m$.

\begin{thmE}[see Corollary 6.3]
If $m$ is sufficiently large and $n \geq \lceil 1.1\log(2.5m) \rceil$, then $\pi(mn) - \pi(n) \geq m-1$.
\end{thmE}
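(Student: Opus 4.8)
The plan is to recognize the count $\pi(mn)-\pi(n)$ as an instance of the shape $\pi(x)-\pi(x/k)$ and to feed it into the Ramanujan-prime machinery of the earlier sections, now with the parameter $k$ itself taken equal to $m$. Setting $k=m$ and $x=mn$ we have $\pi(mn)-\pi(n)=\pi(x)-\pi(x/k)$, and by the Definition of the $n$th $k$-Ramanujan prime the inequality $\pi(x)-\pi(x/m)\geq m-1$ holds for every $x\geq R_{m-1}^{(m)}$. Hence $\pi(mn)-\pi(n)\geq m-1$ follows as soon as $mn\geq R_{m-1}^{(m)}$, and the whole theorem reduces to proving that, for large $m$, the hypothesis $n\geq\lceil 1.1\log(2.5m)\rceil$ already forces $mn\geq R_{m-1}^{(m)}$.

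Next I would bound $R_{m-1}^{(m)}$ from above by means of Theorem \ref{thm056}. Choosing the admissible parameters $\e_1=1/20$ and $\e_2=0$ (so that $\e_1\geq 0$, $\e_2\geq 0$, $\e_1+\e_2\neq 0$, and $(1+\e_1)(1+\e_2)=21/20<11/10$) and applying the theorem with $k=m$ and Ramanujan index $m-1$, the quantity $(1+\e_2)kn/(k-1)$ collapses to $m(m-1)/(m-1)=m$, giving
\begin{displaymath}
R_{m-1}^{(m)} \leq (1+\e_1)\,p_{m} = \frac{21}{20}\,p_{m}.
\end{displaymath}
It therefore suffices to guarantee $mn\geq \frac{21}{20}p_m$, i.e. $n\geq \frac{21}{20}p_m/m$; in particular it is enough to verify $\lceil 1.1\log(2.5m)\rceil \geq \frac{21}{20}p_m/m$ for all large $m$.

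To establish this last inequality I would insert the classical Rosser--Schoenfeld upper bound $p_j<j(\log j+\log\log j)$, valid for $j\geq 6$, with $j=m$. This yields $\frac{21}{20}p_m/m<\frac{21}{20}(\log m+\log\log m)$, whereas the left-hand side satisfies $\lceil 1.1\log(2.5m)\rceil\geq 1.1\log m+1.1\log 2.5$. Since $1.1-\tfrac{21}{20}=\tfrac{1}{20}>0$, the desired inequality reduces to $\frac{21}{20}\log\log m\leq \frac{1}{20}\log m + 1.1\log 2.5$, which holds for every sufficiently large $m$ because $\log\log m=o(\log m)$. Combining this with the reduction of the first paragraph completes the argument.

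The main obstacle is not the size estimate itself, for the factor $1.1$ leaves a genuine $\tfrac{1}{20}\log m$ of room to absorb the secondary $\log\log m$ term; it is rather the verification that Theorem \ref{thm056} is legitimately applicable along the diagonal $k=m$, $n=m-1$. That theorem holds only for Ramanujan indices $n\geq n_1$, and the threshold $n_1$ of \eqref{n_1} depends on the parameter $k$; applying it at $k=m$ and index $m-1$ thus requires confirming that $n_1$, evaluated at $k=m$, does not exceed $m-1$ once $m$ is large. This forces one to track the growth of $n_1$ in $k$ and to make the phrase ``sufficiently large'' quantitatively consistent both with $m-1\geq n_1$ and with the dominance of $\log m$ over $\log\log m$. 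Once the growth of $n_1$ is controlled, the remaining computations are routine.
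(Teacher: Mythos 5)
Your reduction and your endgame coincide with the paper's own proof: the paper (Proposition \ref{p602} together with Corollary \ref{k603}) likewise applies Theorem \ref{t321} on the diagonal $k=m$, $n=m-1$ with $\e_2=0$ and a small fixed $\e_1$ (there $0<\e<0.1$, you take $\e_1=1/20$), obtains $R_{m-1}^{(m)} \leq (1+\e)p_m$, notes that $mn \geq R_{m-1}^{(m)}$ forces $\pi(mn)-\pi(n)\geq m-1$ by the very definition of $R_{m-1}^{(m)}$, and finishes with the Rosser--Schoenfeld bound on $p_m$ exactly as you do; your final comparison $\frac{21}{20}(\log m+\log\log m) \leq 1.1\log m + 1.1\log 2.5$ for large $m$ is correct.

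The gap is precisely the step you set aside: verifying that the threshold $n_1$ of \eqref{n_1}, evaluated at $k=m$, does not exceed $m-1$ once $m$ is large. This is not routine bookkeeping if Theorem \ref{thm056} is kept as a black box, because $n_1$ contains $\pi(X_{12})$, and $X_{12}$ contains $X_{10}$, which is supplied by Proposition \ref{p320} purely existentially (``for every sufficiently large $x$'') with no stated dependence on $k$; its growth in $k$ therefore cannot be tracked from the statement at all. The paper closes this by abandoning the generic parameters: in Proposition \ref{p602} it fixes $A(x)=0$, $B(x)=b_1/\log x$, $Y_0=5393$ (Dusart), and replaces $X_{10}$ by the \emph{explicit} monotonicity threshold $X_{13}(k)=\max\{kX_{14}, e^{2.547}, 5.43k\}$ of Lemma \ref{l322}. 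With these choices every entry of the resulting bound $X_{27}(k)$ is explicit and grows only linearly in $k$ (the quantities $S$, $T$ and $X_{14}$ remain bounded as $k\to\infty$), so $\pi(X_{27}(m))+1 = O(m/\log m)$, and ``$m$ sufficiently large'' can legitimately be taken to mean $m \geq \pi(X_{27}(m,b_1,\e))+1$. Your proposal needs this explicit-parameter argument (or an equivalent one) inserted where you wrote ``once the growth of $n_1$ is controlled''; as it stands, the applicability of Theorem \ref{thm056} at $k=m$, $n=m-1$ --- which you yourself identify as the main obstacle --- is asserted rather than proved.
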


\endgroup

\section{Some simple properties of $k$-Ramanujan primes}

We begin with

\begin{prop} \label{p201}
The following three properties hold for $R_n^{(k)}$.
\begin{enumerate}
 \item[\emph{(i)}] Let $k_1, k_2 \in \R$ with $k_2 > k_1 > 1$. Then $R_n^{(k_1)} \geq R_n^{(k_2)}$.
 \item[\emph{(ii)}] $R_n^{(k)} \geq p_n$ for every $n \in \N$ and every $k > 1$.
 \item[\emph{(iii)}] For each $k$, the sequence $(R_n^{(k)})_{n \in \N}$ is strictly increasing.
\end{enumerate}
\end{prop}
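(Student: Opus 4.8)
The plan is to carry out everything in terms of the step function $f_k(x) := \pi(x) - \pi(x/k)$, using that $R_n^{(k)}$ is by definition the least integer from which $f_k$ remains $\ge n$. Part (i) is then immediate: if $k_2 > k_1 > 1$ then $x/k_2 < x/k_1$ for every $x$, so $\pi(x/k_2) \le \pi(x/k_1)$ and hence $f_{k_2}(x) \ge f_{k_1}(x)$ pointwise. Consequently $f_{k_2}(x) \ge n$ holds on the whole ray $[R_n^{(k_1)}, \infty)$, so $R_n^{(k_1)}$ already belongs to the set defining $R_n^{(k_2)}$; passing to the minimum gives $R_n^{(k_2)} \le R_n^{(k_1)}$. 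Part (ii) is equally short: evaluating the defining inequality at $x = R_n^{(k)}$ and dropping the nonnegative term $\pi(R_n^{(k)}/k)$ yields $\pi(R_n^{(k)}) \ge n$, and since $p_n$ is the smallest integer $m$ with $\pi(m) \ge n$, we get $R_n^{(k)} \ge p_n$.

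For (iii) the non-strict direction is another inclusion of defining sets: any $m$ with $f_k \ge n+1$ on $[m,\infty)$ satisfies $f_k \ge n$ there as well, so the set defining $R_{n+1}^{(k)}$ sits inside the one defining $R_n^{(k)}$, whence $R_n^{(k)} \le R_{n+1}^{(k)}$. To force a strict inequality I would establish the sharp identity $f_k(R_n^{(k)}) = n$. The cleanest route is to put $L := \sup\{x : f_k(x) < n\}$, which is finite because $f_k(x) \to \infty$ by the PNT. Since $f_k$ is right-continuous, every $x > L$ gives $f_k(x) \ge n$ and the right limit forces $f_k(L) \ge n$; on the other hand points with $f_k < n$ accumulate at $L$ from below, so the left limit satisfies $f_k(L^-) \le n-1$.

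The decisive observation is that $f_k$ changes by at most $+1$ at any point, because crossing a prime raises $\pi(x)$ by $1$ while crossing a $k$-multiple of a prime raises $\pi(x/k)$ by $1$, so the net jump always lies in $\{-1,0,+1\}$. As $f_k$ rises from $\le n-1$ to $\ge n$ across $L$, this jump must be exactly $+1$; hence $L$ is prime and, crucially, $f_k(L) = n$. One then checks that $L = R_n^{(k)}$: it is an integer from which $f_k$ stays $\ge n$, and no smaller integer qualifies since points with $f_k < n$ accumulate at $L$ from the left. Thus $f_k(R_n^{(k)}) = n$. Strictness is now immediate: were $R_n^{(k)} = R_{n+1}^{(k)}$, the defining property of $R_{n+1}^{(k)}$ evaluated at $x = R_n^{(k)}$ would give $f_k(R_n^{(k)}) \ge n+1$, contradicting $f_k(R_n^{(k)}) = n$.

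I expect the only genuine obstacle to lie in the careful bookkeeping behind (iii): one must justify that $f_k$ is right-continuous, that the accumulation of points with $f_k < n$ really forces $f_k(L^-) \le n-1$ rather than merely a strict inequality at isolated arguments, and that the jump bound survives the coincidence $p = kq$ (where a prime and a $k$-multiple of a prime meet and the net jump is $0$, still consistent with the argument at $L$). Parts (i) and (ii) present no difficulty, and as a pleasant byproduct the analysis of $L$ reproves the claim in the definition that every $R_n^{(k)}$ is prime.
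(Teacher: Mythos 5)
Your proposal is correct, and it is essentially the paper's approach: the paper's entire proof is ``The assertions follow directly from the definition of $R_n^{(k)}$,'' and your argument is exactly the definition-chasing that this one-liner suppresses, with (i) and (ii) handled by the same set-inclusion and evaluation-at-$R_n^{(k)}$ observations. The identity $\pi(R_n^{(k)}) - \pi(R_n^{(k)}/k) = n$ that you establish for strictness in (iii) is precisely the paper's Proposition \ref{p207} (also stated there as following easily from the definition), and your jump analysis additionally recovers the primality of $R_n^{(k)}$ asserted in the definition.
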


\begin{proof}
The assertions follow directly from the definition of $R_n^{(k)}$.
\end{proof}

\begin{prop} \label{p202}
Let $k>1$ and let $n \in \N$ so that $R_n^{(k)} = p_n$. Then $R_m^{(k)} = p_m$ for every $m \leq n$.
\end{prop}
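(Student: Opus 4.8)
The plan is to exploit the three structural facts collected in Proposition \ref{p201}, together with the fact that each $R_m^{(k)}$ is itself a prime number. By Proposition \ref{p201}(ii) we have $R_m^{(k)} \geq p_m$ for every $m$, and by Proposition \ref{p201}(iii) the sequence $(R_m^{(k)})_m$ is strictly increasing. The key observation is that these two constraints squeeze each $R_m^{(k)}$ between $p_m$ from below and $R_n^{(k)} = p_n$ from above, while forcing it to be prime; I expect this to leave no room for $R_m^{(k)}$ to be anything other than $p_m$.

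First I would reduce the statement to a single descending step: it suffices to show that whenever $n \geq 2$ and $R_n^{(k)} = p_n$, one has $R_{n-1}^{(k)} = p_{n-1}$. Applying this repeatedly (downward induction on $m$, the base case $m=n$ being the hypothesis) then yields $R_m^{(k)} = p_m$ for every $m \leq n$.

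For the single step, consider $R_{n-1}^{(k)}$. By Proposition \ref{p201}(ii) we have $R_{n-1}^{(k)} \geq p_{n-1}$, and by Proposition \ref{p201}(iii) we have $R_{n-1}^{(k)} < R_n^{(k)} = p_n$. Hence $R_{n-1}^{(k)}$ is a prime lying in the interval $[p_{n-1}, p_n)$. Since $p_{n-1}$ and $p_n$ are consecutive primes, the only prime in this interval is $p_{n-1}$ itself, so $R_{n-1}^{(k)} = p_{n-1}$, which closes the step.

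I do not anticipate any genuine obstacle: the result is a direct consequence of the monotonicity in Proposition \ref{p201}(iii) and the lower bound in Proposition \ref{p201}(ii), and the only point requiring care is to keep in mind that each $R_m^{(k)}$ is a prime, which is precisely what pins it down once it is trapped at least at $p_{n-1}$ and strictly below the next prime $p_n$. An equivalent, non-inductive phrasing would note that $R_m^{(k)}, \dots, R_n^{(k)}$ are $n-m+1$ distinct primes lying in $[p_m, p_n]$, an interval containing exactly the $n-m+1$ primes $p_m, \dots, p_n$; since the sequence is increasing this forces $R_j^{(k)} = p_j$ for all $m \leq j \leq n$.
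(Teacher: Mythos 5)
Your proof is correct and follows essentially the same route as the paper, whose own proof is just the one-line remark that the claim follows from the primality of the $R_m^{(k)}$ together with Proposition \ref{p201}; your squeeze argument (trapping the prime $R_{n-1}^{(k)}$ in $[p_{n-1}, p_n)$) is exactly the intended expansion of that remark.
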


\begin{proof}
Let $n \in \N$ be such that $R_n^{(k)} = p_n$. By Proposition \ref{p201}(iii), we get $R_{n-1}^{(k)} < R_n^{(k)} = p_n$; i.e. $R_{n-1}^{(k)} \leq p_{n-1}$.
Using Proposition \ref{p201}(ii), we obtain $R_{n-1}^{(k)} = p_{n-1}$ and our proposition follows by induction.
\end{proof}

\begin{prop} \label{p204}
If $k \geq 2$, then
\begin{displaymath}
R_{\pi(k)}^{(k)} = p_{\pi(k)}.
\end{displaymath}
\end{prop}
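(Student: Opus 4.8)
The plan is to establish the two inequalities $R_{\pi(k)}^{(k)} \geq p_{\pi(k)}$ and $R_{\pi(k)}^{(k)} \leq p_{\pi(k)}$ separately. The first is immediate from Proposition \ref{p201}(ii), which gives $R_n^{(k)} \geq p_n$ for all $n$; taking $n = \pi(k)$ yields $R_{\pi(k)}^{(k)} \geq p_{\pi(k)}$. Thus the whole content of the proposition lies in the reverse inequality, and by the definition of $R_n^{(k)}$ it suffices to show that $\pi(x) - \pi(x/k) \geq \pi(k)$ holds for every $x \geq p_{\pi(k)}$.

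Here I would first record the elementary but essential observation that, for $k \geq 2$, the number $p_{\pi(k)}$ is precisely the largest prime not exceeding $k$, so that $p_{\pi(k)} \leq k$ and $\pi(p_{\pi(k)}) = \pi(k)$. Writing $P = p_{\pi(k)}$, this splits the verification into the two ranges $P \leq x < k$ and $x \geq k$. For $P \leq x < k$ the inequality is trivial: since $x \geq P$ we have $\pi(x) \geq \pi(P) = \pi(k)$, while $x/k < 1$ forces $\pi(x/k) = 0$, so $\pi(x) - \pi(x/k) \geq \pi(k)$.

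For $x \geq k$ I would apply Lemma \ref{l203} to the integers $a = \lfloor x/k \rfloor$ and $b = \lfloor k \rfloor$. Since $x \geq k \geq 2$, both are genuine natural numbers, and the lemma gives $\pi(a) + \pi(b) \leq \pi(ab)$. The point is then that $ab = \lfloor x/k \rfloor \lfloor k \rfloor \leq (x/k)\cdot k = x$, so that $\pi(ab) \leq \pi(x)$; combined with $\pi(a) = \pi(x/k)$ and $\pi(b) = \pi(k)$ this chains into $\pi(x/k) + \pi(k) \leq \pi(x)$, which is exactly the desired bound. Putting the two ranges together shows $\pi(x) - \pi(x/k) \geq \pi(k)$ for all $x \geq P$, hence $R_{\pi(k)}^{(k)} \leq P = p_{\pi(k)}$, and the two inequalities give equality.

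I expect the only real subtlety to be bookkeeping at the boundary: one must be careful that Lemma \ref{l203} requires $a, b \in \N$, which fails once $x < k$ (there $a = \lfloor x/k\rfloor = 0$), forcing the separate elementary treatment of that range, and that the passage from the real arguments $x/k$ and $k$ to their floors is harmless because $\pi$ only sees integer arguments. The crucial idea—recognizing that the multiplicative relation $\lfloor x/k\rfloor\lfloor k\rfloor \leq x$ is precisely what lets Lemma \ref{l203} convert a statement about the single product $x$ into the sum $\pi(x/k) + \pi(k)$—is the heart of the argument, which is why Lemma \ref{l203} was isolated beforehand.
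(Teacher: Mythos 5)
Your proof is correct and takes essentially the same route as the paper: both arguments apply Lemma \ref{l203} to the pair $\lfloor x/k\rfloor$, $\lfloor k\rfloor$ (the paper's $m$ with $mk\leq x<(m+1)k$ and $t=\lfloor k\rfloor$ are exactly these) to obtain $\pi(x)-\pi(x/k)\geq\pi(k)$ for $x\geq k$, and then invoke Proposition \ref{p201}(ii). The only cosmetic difference is the endgame: the paper concludes $R_{\pi(k)}^{(k)}\leq k<p_{\pi(k)+1}$ and uses the primality of $R_{\pi(k)}^{(k)}$ to pin down equality, whereas you verify the defining inequality on the extra range $p_{\pi(k)}\leq x<k$ so as to get $R_{\pi(k)}^{(k)}\leq p_{\pi(k)}$ directly, without appealing to primality.
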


\begin{proof}
By \cite[Satz 33, p.\ 60]{is}, we have $\pi(m)+\pi(n)<\pi(mn)$ for every $m,n \in \N$ with $m,n \geq 2$ and $\max \{m,n\} \ge 6$. Now it is easy to check
that the inequality
\begin{equation} \label{208}
\pi(m) + \pi(n) \leq \pi(mn)
\end{equation}
holds for every $m,n \in \N$. Let $t = \lfloor k \rfloor$ and $x \geq k$. Let $m \in \N$ be such that $mk \leq x < (m+1)k$. Using \eqref{208}, we get
\begin{displaymath}
\pi(x) - \pi \left( \frac{x}{k} \right) \geq \pi(mt) - \pi(m) \geq \pi(t) = \pi(k),
\end{displaymath}
i.e. $R_{\pi(k)}^{(k)} \leq k < p_{\pi(k)+1}$. Using Proposition \ref{p201}(ii), we obtain the required equality.
\end{proof}

\begin{kor} \label{k205}
If $k \geq 2$, then $R_n^{(k)} = p_n$ for every $n=1,\ldots, \pi(k)$.
\end{kor}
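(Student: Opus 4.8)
The plan is to derive this directly from the two preceding results, since the corollary is precisely the combination of Proposition \ref{p204} and Proposition \ref{p202}. First I would invoke Proposition \ref{p204}, which applies because $k \geq 2$, to obtain the single equality
\begin{displaymath}
R_{\pi(k)}^{(k)} = p_{\pi(k)}.
\end{displaymath}
This pins down one index, namely $n = \pi(k)$, at which the $n$th $k$-Ramanujan prime attains its smallest possible value $p_n$; recall from Proposition \ref{p201}(ii) that the inequality $R_n^{(k)} \geq p_n$ holds for every $n$, so equality at a single index is the substantive information.

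The second and final step is to propagate this equality downward to all smaller indices. For this I would apply Proposition \ref{p202} with $n = \pi(k)$: since we have just established $R_{\pi(k)}^{(k)} = p_{\pi(k)}$, that proposition immediately yields $R_m^{(k)} = p_m$ for every $m \leq \pi(k)$, which is exactly the assertion for $m = 1, \ldots, \pi(k)$.

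There is no genuine obstacle here, as the content of the corollary resides entirely in the two propositions it rests upon. The only minor point worth noting is that the hypothesis $k \geq 2$ guarantees $\pi(k) \geq 1$, so that the index range $1, \ldots, \pi(k)$ is nonempty and the statement is not vacuous.
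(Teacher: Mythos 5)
Your proof is correct and matches the paper's own argument exactly: the paper also derives the corollary by combining Proposition \ref{p204} (equality at $n = \pi(k)$) with the downward propagation given by Proposition \ref{p202}. Your additional remark that $k \geq 2$ ensures $\pi(k) \geq 1$ is a sensible observation, though the paper leaves it implicit.
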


\begin{proof}
The claim follows from Proposition \ref{p202} and Proposition \ref{p204}.
\end{proof}

\noindent
For $1 < k < 2$ we can give more information on $R_n^{(k)}$.

\begin{prop} \label{p206}
We have:
\begin{enumerate}
 \item[\emph{(i)}] If $1 < k< 5/3$, then $R_n^{(k)} > p_n$ for every $n \in \N$.
 \item[\emph{(ii)}] If $5/3 \leq k < 2$, then $R_n^{(k)} = p_n$ if and only if $n=1$.
\end{enumerate}
\end{prop}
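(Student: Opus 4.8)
The plan is to reduce both parts to statements about $R_1^{(k)}$ and $R_2^{(k)}$ alone. By Proposition \ref{p201}(ii) we always have $R_n^{(k)} \geq p_n$, and by Proposition \ref{p202} the set $S = \{n \in \N \mid R_n^{(k)} = p_n\}$ is an initial segment of $\N$: if $n \in S$ then $m \in S$ for every $m \leq n$. Hence (i) amounts to showing $1 \notin S$, i.e. $R_1^{(k)} > p_1 = 2$, while (ii) amounts to showing $1 \in S$ and $2 \notin S$, i.e. $R_1^{(k)} = 2$ and $R_2^{(k)} > p_2 = 3$. Throughout I would argue directly from the definition, using that $\pi(x) - \pi(x/k)$ counts the primes lying in the interval $(x/k, x]$, so that it suffices to exhibit (or exclude) real $x$ for which this count is too small.

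For part (i), let $1 < k < 5/3$. Then $3 < 3k < 5$, so for any real $x$ with $3k \leq x < 5$ the interval $(x/k, x] \subseteq (3,5)$ contains no prime, and therefore $\pi(x) - \pi(x/k) = 0 < 1$. Since such an $x$ satisfies $x \geq 3k > 2$, the value $m = 2$ fails in the definition of $R_1^{(k)}$, whence $R_1^{(k)} > 2 = p_1$. Thus $1 \notin S$, so $S = \emptyset$ and $R_n^{(k)} > p_n$ for every $n$.

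For part (ii), let $5/3 \leq k < 2$. First I would show $R_1^{(k)} = 2$, i.e. that $(x/k, x]$ contains a prime for every real $x \geq 2$. If this failed, then for the largest prime $p \leq x$ we would have $p \leq x/k$; writing $q$ for the next prime, this forces $pk \leq x < q$, hence $q > pk \geq (5/3)p$. So it suffices to prove $p_{n+1}/p_n \leq 5/3$ for every $n$, since then $q \leq (5/3)p \leq pk$, a contradiction. Next I would show $R_2^{(k)} > 3$: because $5/3 \leq k < 2$ gives $5 \leq 3k < 6 < 7 < 5k$, any $x$ with $3k \leq x < 7$ satisfies $x/k \geq 3$ and $x/k < 7/k < 5 \leq x < 7$, so the only prime in $(x/k, x]$ is $5$. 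Hence $\pi(x) - \pi(x/k) = 1 < 2$ for this $x \geq 3k > 3$, so $m = 3$ fails and $R_2^{(k)} > 3 = p_2$. Consequently $1 \in S$ and $2 \notin S$, giving $S = \{1\}$, which is precisely the assertion.

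The only nonelementary ingredient, and the main obstacle, is the bound $p_{n+1}/p_n \leq 5/3$ for all $n$ needed in part (ii). I would establish it by combining an explicit prime-gap result for large primes — for instance Nagura's theorem, which gives a prime in $(y, 6y/5)$ for every $y \geq 25$ and hence $p_{n+1} < (6/5)p_n < (5/3)p_n$ once $p_n \geq 25$ — with a direct inspection of the finitely many ratios $p_{n+1}/p_n$ for $p_n < 25$, whose maximum is $5/3$, attained at $(3,5)$. Everything else reduces to the routine interval analysis carried out above.
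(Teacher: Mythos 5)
Your proof is correct, but it takes a genuinely more self-contained route than the paper, above all in part (ii). For (i) the paper does essentially what you do --- exhibit an $x$ for which $(x/k,x]$ is prime-free --- except that it splits into two subcases ($x=2k$ for $1<k<3/2$ and $x=3k$ for $3/2\leq k<5/3$) and then propagates to all $n$ via the strict monotonicity of $(R_n^{(k)})_n$ (Proposition \ref{p201}(iii)) rather than via the initial-segment property of Proposition \ref{p202}; your single choice $x=3k$, valid on all of $1<k<5/3$, is marginally cleaner. The real divergence is in (ii): the paper uses monotonicity in $k$ (Proposition \ref{p201}(i)) to sandwich $p_1 \leq R_1^{(k)} \leq R_1^{(5/3)} = p_1$, taking the value $R_1^{(5/3)} = p_1$ as known, and gets $R_2^{(k)} \geq R_2^{(2)} > p_2$ from Sondow's inequality \eqref{102}, whereas you prove both facts from scratch: $R_2^{(k)}>p_2$ by an explicit count of primes in $(x/k,x]$ for $3k\leq x<7$, and $R_1^{(k)}=2$ by reducing it to the gap bound $p_{n+1} \leq \tfrac{5}{3}\,p_n$ for all $n$, which you obtain from Nagura's theorem (a prime in $(y,6y/5)$ for $y\geq 25$) plus a finite check of the small ratios. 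It is worth noting that the paper's unjustified assertion $R_1^{(5/3)}=p_1$ is exactly equivalent to that gap bound, so your argument fills in the one genuinely nontrivial point the paper glosses over; the price is importing an explicit prime-gap result that the paper never cites, while the paper's route, leaning on monotonicity in $k$ and prior results, is correspondingly shorter.
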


\begin{proof}
(i) If $1<k<3/2$, we set $x=2k$ and obtain $\pi(x) - \pi(x/k) = 0$, i.e. $R_1^{(k)} > 2k > p_1$. It remains to use Proposition \ref{p201}(iii). If $3/2\leq
k<5/3$, we set $x=3k$ and proceed as before.

(ii) Let $n=1$ and $5/3 \leq k < 2$. By Proposition \ref{p201}(ii) we get $p_1 \leq R_1^{(5/3)} = p_1$, i.e. $R_1^{(k)} = p_1$. Let $n \geq 2$. Then
$R_2^{(k)} \geq R_2^{(2)} = 11 > p_2$ and we use Proposition \ref{p201}(iii) as in the previous case.
\end{proof}

\noindent
The following property will be useful in Section 4.

\begin{prop} \label{p207}
For every $n$ and $k$,
\begin{displaymath}
\pi(R_n^{(k)}) - \pi \left( \frac{R_n^{(k)}}{k} \right) = n.
\end{displaymath}
\end{prop}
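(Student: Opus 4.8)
The plan is to read off both inequalities $\pi(R) - \pi(R/k) \geq n$ and $\pi(R) - \pi(R/k) \leq n$ directly from the minimality in the definition of $R := R_n^{(k)}$, writing $f(x) = \pi(x) - \pi(x/k)$. The lower bound is immediate: since $R$ is the smallest element of the set $S = \{m \in \N \mid f(x) \geq n \text{ for all } x \geq m\}$, we have in particular $R \in S$, so $f(x) \geq n$ for every $x \geq R$; taking $x = R$ gives $f(R) \geq n$.

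For the reverse inequality I would exploit that $R-1 \notin S$. By Proposition \ref{p201}(ii) we have $R \geq p_n \geq 2$, so $R-1 \in \N$ is a genuine candidate, and its failure to lie in $S$ produces some real $x_0 \geq R-1$ with $f(x_0) < n$. Since $f(x) \geq n$ holds for all $x \geq R$, this witness must in fact satisfy $R-1 \leq x_0 < R$.

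The key step is to transport the inequality $f(x_0) \leq n-1$ across the point $R$. On the interval $[R-1,R)$ the integer part $\lfloor x \rfloor = R-1$ is constant, so $f(x) = \pi(R-1) - \pi(\lfloor x/k \rfloor)$ is nonincreasing there; hence the left limit $f(R^-) = \lim_{x \to R^-} f(x)$ is at most $f(x_0) \leq n-1$. It then remains to bound the jump of $f$ at $R$. Writing this jump as the difference between the jump of $\pi$ at $R$ and the jump of $\pi$ at $R/k$, each of which is $0$ or $1$, one obtains $f(R) - f(R^-) \leq 1$, so that $f(R) \leq f(R^-) + 1 \leq n$. Combined with $f(R) \geq n$ this forces $f(R) = n$, which is the assertion.

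The main obstacle is purely the bookkeeping of the step function $f$ near $R$: because $k$ need not be rational, one must argue about $\pi(x/k)$ and its left limit with some care, and justify that the only possible upward contribution to $f$ at $R$ comes from $R$ being prime while $R/k$ is not (which incidentally reproves that $R_n^{(k)} \in \Pz$). No deeper analytic input is needed: everything follows from the monotonicity of $\pi$ and the minimality built into the definition of $R_n^{(k)}$.
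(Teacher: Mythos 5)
Your proof is correct and takes essentially the same route as the paper, whose proof of this proposition is simply the remark that it ``easily follows from the definition of $R_n^{(k)}$''; your argument is a careful write-up of exactly that one-line idea, with the minimality of $R_n^{(k)}$ yielding $\geq n$ directly and the failure of $R_n^{(k)}-1$ to lie in the defining set, together with the jump analysis of $\pi(x)-\pi(x/k)$, yielding $\leq n$.
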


\begin{proof}
This easily follows from the definition of $R_n^{(k)}$.
\end{proof}

\noindent
Finally, we formulate an interesting property of the $k$-Ramanujan primes.

\begin{prop} \label{p208}
If $p \in \Pz \setminus \{2\}$, then for every $n \in \N$
\begin{displaymath}
R_n^{(k)} \neq kp-1.
\end{displaymath}
\end{prop}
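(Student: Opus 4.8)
The plan is to argue by contradiction, exploiting the step-function behaviour of $f(x) := \pi(x) - \pi(x/k)$ together with the two facts already available: that $R_n^{(k)}$ is prime and that $f(R_n^{(k)}) = n$ \emph{exactly} (Proposition \ref{p207}). First I would dispose of a trivial case. Since $R_n^{(k)}$ is prime, it is in particular a positive integer, so if $kp - 1 \notin \N$ there is nothing to prove. Hence I may assume $R_n^{(k)} = kp - 1 =: m$ with $m$ prime, which forces $kp = m + 1 \in \N$, and I aim to derive a contradiction.

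The heart of the argument is to compare $f(m)$ and $f(kp)$. Writing $m/k = p - 1/k$ and using $p - 1 < p - 1/k < p$ (valid precisely because $k > 1$, so $1/k < 1$), one evaluates $\pi(m/k) = \pi(p) - 1$; Proposition \ref{p207} then gives $f(m) = \pi(m) - \pi(p) + 1 = n$, that is, $\pi(m) - \pi(p) = n - 1$. On the other hand, $f(kp) = \pi(kp) - \pi(p) = \pi(m+1) - \pi(p)$. Since $kp = m+1 > m = R_n^{(k)}$, the defining property of $R_n^{(k)}$ forces $f(kp) \geq n$, whence $\pi(m+1) \geq \pi(p) + n = \pi(m) + 1$. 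Because $\pi(m+1) - \pi(m) \in \{0,1\}$, this yields $\pi(m+1) = \pi(m) + 1$, i.e. $m + 1 = kp$ is itself prime.

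The final step is then purely arithmetic: $m$ and $m+1$ are consecutive integers both prime, so $\{m, m+1\} = \{2,3\}$, giving $kp = m+1 = 3$. But $p \neq 2$ means $p \geq 3$, and with $k > 1$ this gives $kp > 3$, the desired contradiction. This also explains the hypothesis $p \neq 2$: for $p = 2$ the equation $kp = 3$ is solved by $k = 3/2$, so the concluding strict inequality $kp > 3$ genuinely needs $p \geq 3$. I expect the only delicate point to be the correct bookkeeping at $x = kp$, namely the evaluation $\pi(m/k) = \pi(p) - 1$ and the observation that no prime counted by $\pi(x/k)$ can intervene strictly between $p - 1/k$ and $p$; this is exactly where the sub-unit gap $1/k < 1$ coming from $k > 1$ is used. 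Everything else reduces to the two quoted properties of $R_n^{(k)}$ and the elementary fact about consecutive primes.
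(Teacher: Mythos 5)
Your proof is correct and is essentially the paper's argument rearranged: both rest on Proposition \ref{p207}, the evaluation $\pi((kp-1)/k) = \pi(p) - 1$, and the defining property of $R_n^{(k)}$ applied just beyond $R_n^{(k)}$ at $x = kp$, with the parity of consecutive integers supplying the final contradiction. The only difference is the order of deductions: the paper first notes $kp$ is composite (since $kp-1$ is an odd prime) and concludes $\pi(kp+r) - \pi((kp+r)/k) = n-1 < n$, violating the definition, whereas you let the definition force $kp$ to be prime and then kill that with the same parity fact.
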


\begin{proof}
It suffices to consider the case $kp \in \N$. Assume $R_n^{(k)} = kp-1$ for some $n \in \N$. Since $kp-1 > 2$, we obtain $kp \not\in \mathds{P}$. Let $r \in
\R$ with $0 \leq r < 1$. Using Proposition \ref{p207}, we get
\begin{displaymath}
\pi(kp+r) - \pi \left( \frac{kp+r}{k} \right) = \pi(R_n^{(k)}) - \pi \left( \frac{R_n^{(k)}}{k} \right) - 1 = n-1,
\end{displaymath}
which contradicts the definition of $R_n^{(k)}$.
\end{proof}

\section{Estimates for the $n$th $k$-Ramanujan prime}

From here on, we use the following notation. Let $m_1 \in \N$ and $s, a_1, \ldots, a_{m_1} \in \R$ with $s \geq 0$. We define
\begin{displaymath}
A(x) = \sum_{j=1}^{m_1} \frac{a_j}{\log^j x}
\end{displaymath}
and $Y_s = Y_s(a_1, \ldots, a_{m_1})$ so that
\begin{equation} \label{309}
\pi(x) > \frac{x}{\log x - 1 - A(x)} + s
\end{equation}
for every $x \geq Y_s$. Further, for $m_2 \in \N$ and $b_1, \ldots, b_{m_2} \in \R_{\geq 0}$ we define
\begin{displaymath}
B(x) = \sum_{j=1}^{m_2}\frac{b_j}{\log^j x}
\end{displaymath}
and $X_0 = X_0(b_1, \ldots, b_{m_2})$ so that
\begin{equation} \label{310}
\pi(x) < \frac{x}{\log x - 1 - B(x)}
\end{equation}
for every $x \geq X_0$. In addition, let $X_1 = X_1(k, a_1, \ldots, a_{m_1}, b_1, \ldots, b_{m_2})$ be such that
\begin{equation} \label{311}
\log k - B(kx) + A(x) \geq 0
\end{equation}
for every $x \geq X_1$.

\begin{rema}
From \cite{pan3} it follows directly that there exist parameters $s,m_1, m_2, a_1, \ldots, a_{m_{1}}, b_1, \ldots, b_{m_{2}}, Y_s$ and $X_0$ such that the
inequalities \eqref{309} and \eqref{310} are fulfilled.
\end{rema}

\begin{rema}
It is clear that $B(x) > A(x)$ for every $x \geq \max\{Y_s,X_0\}$. Hence $b_1 > a_1$.
\end{rema}

\subsection{On a lower bound for the $n$th $k$-Ramanujan prime}

\subsubsection{A lower bound for the $n$th $k$-Ramanujan prime}

The theorem below implies that the inequality \eqref{102} can be generalized, in view of \eqref{103}, to $k$-Ramanujan primes. 

\begin{thm} \label{t301}
Let $t \in \Z$ with $t > - \lceil k/(k-1) \rceil$. Then
\begin{displaymath}
R_n^{(k)} > p_{\lceil kn/(k-1) \rceil + t}
\end{displaymath}
for every $n \in \N$ with
\begin{equation} \label{312}
n \geq n_0 = \frac{k-1}{k} ( \pi(X_2) - t + 1),
\end{equation}
where $r \geq (t+1)(k-1)/k$ and
\begin{equation} \label{X2}
X_2 = X_2(k, r, m_1, m_2, a_1, \ldots, a_{m_{1}}, b_1, \ldots, b_{m_{2}}) = \max \{ X_0, kX_1, kY_r \}.
\end{equation}
\end{thm}

\begin{proof}
Let $x \geq X_2/k$. Then the inequality \eqref{311} is equivalent to
\begin{displaymath}
\frac{x}{\log x - 1 - A(x)} \geq \frac{x}{\log(kx) - 1 - B(kx)}
\end{displaymath}
and we get
\begin{equation}  \label{314}
\pi(x) > \frac{x}{\log x - 1 -A(x)} + r \geq \frac{x}{\log kx - 1 - B(kx)} + r > \frac{\pi(kx)}{k} + r.
\end{equation}
By setting $x=p_{\lceil kn/(k-1) \rceil + t}/k \geq X_2/k$ in \eqref{314}, we obtain
\begin{displaymath}
\pi \left( \frac{1}{k} \, p_{\lceil kn/(k-1) \rceil + t} \right) > \frac{1}{k} \left( \left\lceil \frac{kn}{k-1} \right\rceil + t \right) + r.
\end{displaymath}
Hence,
\begin{displaymath}
\pi(p_{\lceil kn/(k-1)\rceil + t}) - \pi \left( \frac{1}{k} \, p_{\lceil kn/(k-1)\rceil + t} \right) < \frac{k-1}{k} \left\lceil \frac{kn}{k-1} \right\rceil
+ t - \frac{t}{k} - r \leq n,
\end{displaymath}
and we apply the definition of $R_n^{(k)}$.
\end{proof}

\begin{kor} \label{k302}
We have
\begin{displaymath}
\liminf_{n \to \infty} (R_n^{(k)} - p_{\lceil kn/(k-1) \rceil})= \infty.
\end{displaymath}
\end{kor}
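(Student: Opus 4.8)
The plan is to read off the corollary from Theorem~\ref{t301} by letting the shift parameter $t$ grow. Recall that $\liminf_{n\to\infty}(R_n^{(k)}-p_{\lceil kn/(k-1)\rceil})=\infty$ means precisely that for every $N\in\N$ there is an index beyond which the difference exceeds $N$. So I would fix an arbitrary $N\in\N$ and aim to show that $R_n^{(k)}-p_{\lceil kn/(k-1)\rceil}>N$ holds for all sufficiently large $n$.

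To do this, I would apply Theorem~\ref{t301} with $t=N$. Since $t=N\geq 1>-\lceil k/(k-1)\rceil$, the hypothesis on $t$ is satisfied, and the theorem supplies a threshold $n_0=n_0(N)$ such that $R_n^{(k)}>p_{\lceil kn/(k-1)\rceil+N}$ for every $n\geq n_0$. Writing $j=\lceil kn/(k-1)\rceil$ for brevity, it then remains to estimate $p_{j+N}-p_j$ from below. The primes $p_{j+1},\dots,p_{j+N}$ are $N$ distinct integers, all strictly larger than $p_j$ and strictly increasing, so $p_{j+N}\geq p_j+N$. Combining the two facts, for every $n\geq n_0$ I obtain $R_n^{(k)}-p_j>p_{j+N}-p_j\geq N$, which is exactly the inequality sought.

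The argument has essentially no hard part: the entire content already resides in Theorem~\ref{t301}, and the only point to watch is that the threshold $n_0$ depends on the chosen value $t=N$. This dependence is harmless, because the statement $\liminf=\infty$ only requires, separately for each $N$, that the difference eventually remains above $N$; no threshold uniform in $N$ is needed. Letting $N\to\infty$ then yields the claim.
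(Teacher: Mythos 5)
Your proposal is correct and follows essentially the same route as the paper: apply Theorem~\ref{t301} with an arbitrarily large shift $t$ and bound $p_{\lceil kn/(k-1)\rceil+t}-p_{\lceil kn/(k-1)\rceil}$ from below by counting the intermediate primes. The only cosmetic difference is that the paper uses the bound $\geq 2t$ (prime gaps of size at least $2$) where you use $\geq N$, which is equally sufficient.
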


\begin{proof}
From Theorem \ref{t301}, it follows that for every $t \in \N$ there is an $N_0 \in \N$ such that for every $n \geq N_0$,
\begin{displaymath}
R_n^{(k)} - p_{\lceil kn/(k-1) \rceil} \geq p_{\lceil kn/(k-1) \rceil + t} - p_{\lceil kn/(k-1) \rceil} \geq 2t.
\end{displaymath}
This proves our corollary.
\end{proof}

\begin{rema}
In 2013, Sondow \cite{so2} raised the question whether the sequence $(R_n - p_{2n})_{n \in \N}$ is unbounded. Corollary \ref{k302} implies that this is
indeed the case.
\end{rema}

\begin{kor} \label{k303}
If $n \geq \max \{ 2, (k-1)\pi(X_2)/k \}$, where $X_2$ is defined by \eqref{X2}, then
\begin{displaymath}
R_n^{(k)} - p_{\lceil kn/(k-1) \rceil} \geq 6.
\end{displaymath}
\end{kor}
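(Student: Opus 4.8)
The plan is to invoke Theorem~\ref{t301} with the specific choice $t = 1$, for which $r = 2(k-1)/k$ and the threshold in \eqref{n_0} becomes $n_0 = \frac{k-1}{k}(\pi(X_2) - 1 + 1) = \frac{k-1}{k}\pi(X_2)$, matching the second entry in the hypothesis $n \geq \max\{2, (k-1)\pi(X_2)/k\}$. For every such $n$, Theorem~\ref{t301} yields $R_n^{(k)} > p_{\lceil kn/(k-1)\rceil + 1}$. Writing $m = \lceil kn/(k-1)\rceil$ and using that $R_n^{(k)}$ is a prime strictly larger than $p_{m+1}$, I would conclude $R_n^{(k)} \geq p_{m+2}$, so that $R_n^{(k)} - p_m \geq p_{m+2} - p_m$. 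It then remains to bound the gap $p_{m+2} - p_m$ from below by $6$.

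The condition $n \geq 2$ enters precisely here: since $k/(k-1) > 1$ we have $kn/(k-1) > n \geq 2$, whence $m = \lceil kn/(k-1)\rceil \geq 3$ and therefore $p_m \geq 5$. The remaining step is the elementary observation that $p_{m+2} - p_m \geq 6$ whenever $p_m \geq 5$. I would argue as follows: the three consecutive primes $p_m, p_{m+1}, p_{m+2}$ are all odd, so $p_{m+2} - p_m$ is even and at least $4$; if it equalled $4$, then necessarily $p_{m+1} = p_m + 2$ and $p_{m+2} = p_m + 4$, making $p_m, p_m + 2, p_m + 4$ three integers in arithmetic progression with common difference $2$, one of which is divisible by $3$ — impossible for three primes with $p_m \geq 5$. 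Hence $p_{m+2} - p_m \geq 6$.

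Combining the two parts gives $R_n^{(k)} - p_{\lceil kn/(k-1)\rceil} \geq p_{m+2} - p_m \geq 6$ for all $n \geq \max\{2, (k-1)\pi(X_2)/k\}$, as claimed; here $X_2$ is understood with the value $t = 1$ fixed in Theorem~\ref{t301}. I do not expect a genuine obstacle in this argument. The only points requiring care are the correct choice of $t$ (so that the threshold in \eqref{n_0} reduces exactly to $(k-1)\pi(X_2)/k$), the passage from the strict inequality $R_n^{(k)} > p_{m+1}$ to $R_n^{(k)} \geq p_{m+2}$ via the primality of $R_n^{(k)}$, and the divisibility-by-$3$ argument that rules out a prime gap of size $4$ over two consecutive steps.
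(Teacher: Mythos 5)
Your proposal is correct and follows essentially the same route as the paper: set $t=1$ in Theorem~\ref{t301} (so that the threshold \eqref{n_0} becomes $(k-1)\pi(X_2)/k$), use primality of $R_n^{(k)}$ to upgrade $R_n^{(k)} > p_{\lceil kn/(k-1)\rceil+1}$ to $R_n^{(k)} \geq p_{\lceil kn/(k-1)\rceil+2}$, and conclude via the nonexistence of prime triples $(p,p+2,p+4)$ for $p>3$. You merely spell out the divisibility-by-$3$ argument behind that last fact and make explicit where the hypothesis $n \geq 2$ is needed (to guarantee $p_{\lceil kn/(k-1)\rceil} \geq 5$), both of which the paper leaves implicit.
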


\begin{proof}
We set $t=1$ in Theorem \ref{t301}. Then for every $n \geq (k-1)\pi(X_2)/k$ we obtain
\begin{displaymath}
R_n^{(k)} \geq p_{\lceil kn/(k-1) \rceil + 2} \geq p_{\lceil kn/(k-1) \rceil + 1} + 2 \geq p_{\lceil kn/(k-1) \rceil} + 4.
\end{displaymath}
Since there is no prime triple of the form $(p,p+2,p+4)$ for $p>3$, we are done.
\end{proof}

\noindent
Now we find an explicit value for $X_2$ in the case $t=0$.

%

\begin{prop} \label{p305}
Let $X_3 = X_3(k) = \max \{470077k, kr(k)\}$, where
\begin{displaymath}
r(k) = \frac{1}{k} \, \exp \left( \sqrt{ \max \left\{ \frac{3.83}{\log k} -1, 0 \right\} } \right).
\end{displaymath}
Then
\begin{displaymath}
R_n^{(k)} > p_{\lceil kn/(k-1) \rceil}
\end{displaymath}
for every
\begin{displaymath}
n \geq \frac{k-1}{k} ( \pi(X_3) + 1).
\end{displaymath}
\end{prop}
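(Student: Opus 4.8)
The plan is to derive Proposition \ref{p305} as the special case $t = 0$ of Theorem \ref{t301}, by choosing explicit admissible data $A$ and $B$ and then showing that the resulting $X_2$ coincides with $X_3$. Setting $t = 0$ makes $r = (k-1)/k$ and turns the threshold $n_0 = \frac{k-1}{k}(\pi(X_2) - t + 1)$ into $\frac{k-1}{k}(\pi(X_2) + 1)$, so it remains only to identify $X_2 = \max\{X_0, kX_1, kY_r\}$ with $X_3 = \max\{470077k, kr(k)\}$.

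First I would fix the lower-bound data $m_1 = 1$, $a_1 = 1$, so that $A(x) = 1/\log x$. Since $r = (k-1)/k < 1$, Lemma \ref{l304} gives $\pi(x) > \frac{x}{\log x - 1 - A(x)} + 1 > \frac{x}{\log x - 1 - A(x)} + r$ for $x \ge 470077$; thus \eqref{307} holds with $Y_r = 470077$, giving $kY_r = 470077k$. For the upper bound I would take $m_2 = 2$, $b_1 = 1$, $b_2 = 3.83$, so that $B(x) = 1/\log x + 3.83/\log^2 x$, invoking the corresponding bound of \cite{ca2} to guarantee that \eqref{308} holds for all $x \ge X_0$ with $X_0 \le 470077$; in particular $X_0 \le 470077k$.

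The crux is to verify \eqref{309}, i.e. that $\log k - B(kx) + A(x) \ge 0$ for all $x \ge r(k)$, which then yields $X_1 \le r(k)$. Writing the two reciprocal-logarithm terms as $\frac{1}{\log x} - \frac{1}{\log kx} = \frac{\log k}{\log x \, \log kx}$, one reduces the inequality to $\log k\,\log^2(kx) + \frac{\log k\,\log(kx)}{\log x} \ge 3.83$. Because $\frac{\log(kx)}{\log x} = 1 + \frac{\log k}{\log x} > 1$ for $x > 1$, the left-hand side exceeds $\log k\,(\log^2(kx) + 1)$, so it suffices that $\log^2(kx) \ge 3.83/\log k - 1$; allowing for the case where the right side is negative, this is exactly $\log(kx) \ge \sqrt{\max\{3.83/\log k - 1, 0\}}$, i.e. $x \ge r(k)$. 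Hence $X_1 = r(k)$ is admissible, and since $X_0 \le 470077k$ and $kY_r = 470077k$, we obtain $X_2 = \max\{X_0, kr(k), 470077k\} = \max\{470077k, kr(k)\} = X_3$. Theorem \ref{t301} with $t = 0$ then gives $R_n^{(k)} > p_{\lceil kn/(k-1)\rceil}$ for every $n \ge \frac{k-1}{k}(\pi(X_3) + 1)$.

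I expect the main obstacle to be the algebraic verification of \eqref{309}, specifically isolating the clean sufficient condition $\log^2(kx) \ge 3.83/\log k - 1$ from the mixed reciprocal-logarithm terms: the ``$-1$'' in $r(k)$ comes precisely from the cross term $\frac{\log k\,\log(kx)}{\log x} > \log k$, and keeping this positive contribution (rather than discarding it) is exactly what produces the sharp exponent. A secondary point requiring care is confirming that the chosen upper bound $B$ is valid from a starting value $X_0 \le 470077$, so that $X_0$ does not inflate $X_2$ beyond $X_3$.
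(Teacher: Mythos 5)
Your proposal is correct and takes essentially the same route as the paper: choose $t=0$ in Theorem \ref{t301} so that $r=(k-1)/k$, take $A(x)=1/\log x$ with $Y_r=470077$ via Lemma \ref{l304}, take $B(x)=1/\log x + 3.83/\log^2 x$ with a small $X_0$ via Corollary 3.9 of \cite{ca2}, and check that $x \geq r(k)$ gives \eqref{309}, so that $X_2$ collapses to $X_3$. The only difference is that you actually carry out the algebra behind \eqref{309}, which the paper dismisses as ``easy to show''; your verification assumes $x>1$, but this is harmless since in the proof of Theorem \ref{t301} the inequality \eqref{309} is only invoked for $x \geq X_2/k \geq Y_r = 470077$.
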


\begin{proof}
By Corollary 3.6 of \cite{ca2}, it is easy to see that the inequality
\begin{equation} \label{315}
\pi(x) > \frac{x}{\log x - 1 - \frac{1}{\log x}} + 1
\end{equation}
holds for every $x \geq 38168363$. A computer check shows that \eqref{315} also holds for every $470077 \leq x \leq 38168363$. We choose $t=0$ in Theorem
\ref{t301}. Then $r = (k-1)/k$. We set $A(x) = 1/\log x$ and $Y_r = 470077$. By \eqref{315}, we get that the inequality \eqref{309} holds for every $x \geq
Y_r$. By choosing $b_1 = 1$, $b_2 = 3.83$ and $X_0 = 9.25$, we can use the third inequality in Corollary 3.5 of \cite{ca2}. Let $x \geq r(k)$. Then it is
easy to show that the inequality \eqref{311} holds. Now our proposition follows from Theorem \ref{t301}.
\end{proof}

\noindent
For Ramanujan primes we yield the following result.

\begin{kor} \label{k306}
If $n \geq 4$, then
\begin{displaymath}
R_n - p_{2n} \geq 6.
\end{displaymath}
\end{kor}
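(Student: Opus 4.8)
The plan is to read off the statement from Corollary \ref{k303} in the classical case $k=2$. When $k=2$ one has $\lceil kn/(k-1)\rceil=\lceil 2n\rceil=2n$, so Corollary \ref{k303} asserts exactly $R_n-p_{2n}\geq 6$ as soon as $n\geq\max\{2,\pi(X_2)/2\}$, once an admissible explicit value of $X_2$ is supplied for the parameter choice $t=1$. Thus the proof splits into two tasks: first, pin down such an $X_2$; second, dispose of the finitely many $n$ lying below the resulting threshold.

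For the first task I would reuse the exact parameters from the proof of Proposition \ref{p305}. Set $A(x)=1/\log x$; by Lemma \ref{l304} the inequality \eqref{307} then holds with $s=1$ for every $x\geq 470077$. Take $b_1=1$, $b_2=3.83$ and $X_0=9.25$ for the upper bound \eqref{308}, and recall that \eqref{309} holds for $x\geq X_1=r(2)$, where $r(2)=\tfrac12\exp(\sqrt{3.83/\log 2-1})<4.2$. For $t=1$ we have $r=(t+1)(k-1)/k=1\leq s$, so Lemma \ref{l304} still provides $Y_r=470077$, and the quantity $X_2=\max\{X_0,2X_1,2Y_r\}$ from Theorem \ref{t301} equals $940154$ — precisely the value $X_3$ obtained in Proposition \ref{p305}. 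Hence Corollary \ref{k303} yields $R_n-p_{2n}\geq 6$ for every $n\geq\pi(940154)/2$.

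For the second task it remains to verify $R_n-p_{2n}\geq 6$ directly for the integers $n$ with $4\leq n<\pi(940154)/2$, by tabulating $R_n$ and $p_{2n}$. The boundary behaviour $R_4=29=p_{10}$ against $p_8=19$ explains why the bound can only begin at $n=4$ (for $n\leq 3$ the difference equals $-1,4,4$ respectively, all below $6$). I expect this finite check to be the sole genuine obstacle: the analytic threshold $\pi(940154)/2$ is of order $3.7\cdot 10^4$, so the verification ranges over tens of thousands of values of $n$ and needs the Ramanujan primes up to about $10^6$. This is routine on a computer but unavoidable, since the gap between the empirical onset $n=4$ and the threshold delivered by Corollary \ref{k303} is very large; all of the analytic content is inherited from the already-proved Corollary \ref{k303}.
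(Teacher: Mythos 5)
Your proposal is correct and follows essentially the same route as the paper: it invokes Corollary \ref{k303} with $t=1$, $k=2$, reuses the parameters $A(x)=1/\log x$, $b_1=1$, $b_2=3.83$, $X_0=9.25$, $Y_r=470077$ from the proof of Proposition \ref{p305} (noting, as one must, that Lemma \ref{l304} covers $r=1$), arrives at the same threshold $X_2=940154$, i.e.\ $n\geq\pi(940154)/2=37098$, and finishes with the same computer verification for $4\leq n\leq 37097$.
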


\begin{proof}
We set $t = 1$ and $k=2$ in Theorem \ref{t301}. Then $r=1$. From Corollary \ref{k303} and from the proof of Proposition \ref{p305}, if follows that $R_n -
p_{2n} \geq 6$ for all $n \geq \pi(X_3(2))/2 = \pi(\max \{940154, 2r(2)\})/2 = 37098$. We check with a computer that the inequality $R_n - p_{2n} \geq 6$
also holds for every $4 \leq n \leq 37097$.
\end{proof}

\begin{rema}
Since $R_2 - p_{4} = 4$ and $R_3 - p_{6} = 4$, Corollary \ref{k306} gives a positive answer to the question raised by Sondow \cite{so2}, whether $\min \{
R_n - p_{2n} \mid n \geq 2 \} = 4$.
\end{rema}

\subsubsection{An explicit formula for $N(k)$}

In the introduction we defined $N(k)$ to be the smallest positive integer so that
\begin{displaymath}
R_n^{(k)} > p_{\lceil kn/(k-1) \rceil}
\end{displaymath}
for every $n \geq N(k)$. By Proposition \ref{p305}, we get
\begin{displaymath} 
N(k) \leq \left \lceil \frac{k-1}{k} (\pi( \max \{470077k, kr(k)\}) + 1) \right \rceil
\end{displaymath}
for every $k > 1$. We can significantly improve this inequality in the following case.

\begin{thm} \label{t307}
If $k \geq 745.8$, then
\begin{displaymath}
N(k) \leq \pi(3k)-1.
\end{displaymath}
\end{thm}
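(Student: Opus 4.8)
The plan is to unwind the definition of $N(k)$ and reduce everything to a single, flexible application of the threshold property of $R_n^{(k)}$. It suffices to prove that $R_n^{(k)} > p_{\lceil kn/(k-1)\rceil}$ for every integer $n \geq \pi(3k)-1$. Write $M = \lceil kn/(k-1)\rceil$ and recall the identity $M - n = \lceil kn/(k-1)\rceil - n = \lceil n/(k-1)\rceil$. The basic device, already used in the proof of Theorem \ref{t301}, is this: if some real $x \geq p_M$ satisfies $\pi(x) - \pi(x/k) < n$, then $p_M$ cannot serve as a threshold in the definition of $R_n^{(k)}$, and hence $R_n^{(k)} > p_M$. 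The point of the present theorem is that Theorem \ref{t301} always uses $x = p_M$, whereas near the boundary a better choice of $x$ is available; exploiting this is exactly what pushes the admissible starting index all the way down to $\pi(3k)-1$, far below the bound furnished by Proposition \ref{p305}.

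First I would dispose of the boundary value $n = \pi(3k)-1$ by hand. Since $\pi(3k) < k$ for $k \geq 745.8$, we have $0 < n/(k-1) < 1$, so $\lceil n/(k-1)\rceil = 1$ and therefore $M = n+1 = \pi(3k)$; in particular $p_M = p_{\pi(3k)} \leq 3k$. Choosing $x = 3k \geq p_M$ then gives
\[
\pi(x) - \pi(x/k) = \pi(3k) - \pi(3) = \pi(3k) - 2 = n - 1 < n,
\]
which yields $R_n^{(k)} > p_M$. Note that the naive choice $x = p_M$ returns the value $n$ here, not something smaller, so the substitution $x = 3k$ is genuinely necessary at the boundary.

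For $n \geq \pi(3k)$ I would revert to $x = p_M$. Then $\pi(p_M) - \pi(p_M/k) = M - \pi(p_M/k)$, so the condition $\pi(x) - \pi(x/k) < n$ is equivalent to $\pi(p_M/k) > M - n = \lceil n/(k-1)\rceil$. Setting $j = \lceil n/(k-1)\rceil$, this becomes the clean prime inequality
\[
p_M \geq k\,p_{j+1}.
\]
For $j = 1$ it holds automatically: $n \geq \pi(3k)$ forces $M \geq \pi(3k)+1$, whence $p_M \geq p_{\pi(3k)+1} > 3k = k\,p_2$. For $j \geq 2$ I would combine the lower bound $M > k(j-1)$ (coming from $n > (j-1)(k-1)$) with explicit lower bounds for $p_M$ and upper bounds for $p_{j+1}$ of the type recorded in \cite{ca2}; the resulting difference $p_M - k\,p_{j+1}$ then exceeds a positive quantity of order $k(j-1)\log k$, so the inequality holds with a margin that grows with $j$.

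The hard part will be making $p_M \geq k\,p_{j+1}$ hold uniformly over the small values $j \geq 2$ and simultaneously pinning down the exact threshold $k \geq 745.8$. For such $j$ the index $M$ is only of size $\approx k$, so the sharpest asymptotic estimates for $p_M$ and $\pi$ may lie outside their ranges of validity and must be replaced by cruder, everywhere-valid bounds or by a direct check; meanwhile $p_{j+1}$ is a fixed small constant, so there is little slack. I expect the precise cutoff $745.8$ to be forced by the interplay between these finitely many low-$j$ inequalities and the validity ranges of the cited $\pi$- and $p_m$-estimates, all larger $j$ being comfortable. If one prefers, the large-$n$ tail can instead be absorbed by Proposition \ref{p305} (valid for $n \geq n_0$), leaving only the finite band $\pi(3k) \leq n < n_0$ to be verified against the same explicit estimates.
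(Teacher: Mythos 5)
Your reduction is sound and genuinely different from the paper's: where the paper gets the range $n\geq\pi(3k)+1$ from Theorem \ref{t301} with the explicit choices $A(x)=-7.1/\log x$, $B(x)=1.17/\log x$ and the function $\widetilde r(k)$, shown to satisfy $\widetilde r(k)\leq 3$ for $k\geq 745.8$ (so that $X_4=3k$ in \eqref{312}), you convert the problem, via the test point $x=p_M$, into the family of prime inequalities $p_M\geq k\,p_{j+1}$ with $j=\lceil n/(k-1)\rceil$ and $M=n+j$. Your boundary case $n=\pi(3k)-1$ (test point $x=3k$) is exactly the paper's opening observation $R_{\pi(3k)-1}^{(k)}>3k$, and your $j=1$ case is correct; together these cover $\pi(3k)-1\leq n\leq k-1$, which the paper disposes of in one stroke via \eqref{311} and Proposition \ref{p202}.

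The gap is that the case $j\geq 2$ --- i.e.\ all $n\geq k$ --- is where the entire quantitative content of the theorem lies, and you do not prove it: you explicitly defer it as ``the hard part.'' Moreover, your sketch misjudges where the danger is. The range-of-validity worry is a non-issue, but the claim that ``all larger $j$ are comfortable'' is false under lazy estimates: with a crude everywhere-valid bound such as Rosser's $p_M>M\log M$ and $M>(j-1)k$, the sufficient condition becomes $(j-1)(\log(j-1)+\log k)\geq p_{j+1}$, and since $p_{j+1}$ grows like $(j+1)(\log(j+1)+\log\log(j+1))$, this sufficient condition \emph{fails} once $\log\log j$ exceeds roughly $\log k$ --- an astronomically large but real range of $j$. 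A correct execution needs a lower bound carrying the doubly logarithmic term, e.g.\ $p_M\geq M(\log M+\log\log M-1)$ for all $M\geq 2$ (Dusart \cite{pd3}), played against $p_{j+1}\leq (j+1)(\log(j+1)+\log\log(j+1))$; then the $\log\log$ terms cancel and the surplus is about $(j-1)(\log k-1)-2\log j-2\log\log j>0$ for $\log k\geq 6.6$, with the few small $j$ checked directly. Your proposed fallback does not shrink this work: Proposition \ref{p305} only takes over at $n$ of order $\pi(470077k)$, so the remaining ``finite band'' is enormous and $k$-dependent and requires the same analysis. (A minor point: the cutoff $745.8$ is not forced by low-$j$ interplay; in the paper it is where $\widetilde r(k)\leq 3$ and $\pi(3k)+1\leq k$ both take hold, and in your scheme the binding constraints would likewise be global ones, not the small-$j$ cases.)
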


\begin{proof}
Since $\pi(3k) - \pi(3k/k) = \pi(3k) - 2 < \pi(3k) - 1$, we have
\begin{equation} \label{316}
R_{\pi(3k)-1}^{(k)} > 3k \geq p_{\pi(3k)}
\end{equation}
By induction and Proposition \ref{p201}, we obtain that
\begin{equation} \label{317}
R_n^{(k)} > p_{n+1}
\end{equation}
for every $n \geq \pi(3k)-1$. We set $A(x) = - 7.1/\log x$, $s=1$ and $Y_1 = 3$. Then, as in the proof of \eqref{315}, we obtain that the inequality
\eqref{309} holds for every $x \geq Y_1$. By setting $B(x)=1.17/\log x$ and $X_0 = 5.43$ and using Corollary 3.5 of \cite{ca2}, we see that the inequality
\eqref{310} holds. Let
\begin{displaymath}
\widetilde r(k) = \exp \left( \sqrt{7.1 + \frac{1}{4}\left( \log k - \frac{8.27}{\log k} \right)^2} - \frac{1}{2} \left( \log k - \frac{8.27}{\log k}
\right) \right).
\end{displaymath}
It is easy to see that $x \geq \widetilde r(k)$ implies the inequality
\eqref{311}. By Theorem \ref{t301}, we obtain
\begin{equation}  \label{318}
N(k) \leq \left \lceil \frac{k-1}{k} ( \pi(X_4) + 1) \right \rceil,
\end{equation}
where $X_4 = X_4(k) = \max\{ 5.43, 3k, k\widetilde r(k)\}$. Since $\widetilde r(k)$ is decreasing, from $\widetilde r(745.8) \leq 2.999966$ we get that
$\widetilde r(k) \leq 3$ and therefore $X_4 = 3k$ for every $k \geq 745.8$. Since $\pi(3k)+1 \leq k$ for every $k \geq 745.8$, we obtain $N(k) \leq
\pi(3k) + 1$ by \eqref{318}. Finally, we apply \eqref{317}.
\end{proof}

\begin{rema}
Similarly to the proof of \eqref{316}, we obtain in general that for every real $r \geq 2/k$,
\begin{displaymath}
R_{\pi(rk) - \pi(r) + 1}^{(k)} > p_{\pi(rk)}.
\end{displaymath}
\end{rema}

\noindent
Next, we find a lower bound for $N(k)$.

\begin{prop} \label{p308}
For every $k> 1$,
\begin{displaymath}
N(k) > \pi(k).
\end{displaymath}
\end{prop}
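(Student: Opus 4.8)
The plan is to exhibit a single index $n \geq \pi(k)$ at which the defining inequality for $N(k)$ fails; by the very definition of $N(k)$ as a minimal threshold, this forces $N(k) > \pi(k)$. Concretely, recall that $N(k) = \min\{m \in \N \mid R_n^{(k)} > p_{\lceil kn/(k-1)\rceil} \text{ for every } n \geq m\}$, which is finite for all $k>1$ by Proposition \ref{p305}. If I can produce some $n^\ast \geq \pi(k)$ with $R_{n^\ast}^{(k)} \leq p_{\lceil kn^\ast/(k-1)\rceil}$, then no admissible threshold $m$ can satisfy $m \leq \pi(k) \leq n^\ast$, since $n^\ast$ would lie in the range $[m,\infty)$ and violate the required inequality. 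Hence every admissible $m$ obeys $m > \pi(k)$, giving $N(k) \geq \pi(k)+1$.

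For the main case $k \geq 2$ I would take $n^\ast = \pi(k)$, which is a legitimate choice since then $\pi(k) \geq 1$. By Corollary \ref{k205} we have the explicit value $R_{\pi(k)}^{(k)} = p_{\pi(k)}$. It remains only to compare the subscript $\pi(k)$ with $\lceil k\pi(k)/(k-1)\rceil$. Because $k/(k-1) > 1$ and $\pi(k) \geq 1$, one has $k\pi(k)/(k-1) = \pi(k) + \pi(k)/(k-1) > \pi(k)$, and since $\pi(k)$ is an integer the ceiling jumps at least one step, so $\lceil k\pi(k)/(k-1)\rceil \geq \pi(k)+1$. Monotonicity of $(p_j)_j$ then yields
\begin{displaymath}
R_{\pi(k)}^{(k)} = p_{\pi(k)} < p_{\pi(k)+1} \leq p_{\lceil k\pi(k)/(k-1)\rceil},
\end{displaymath}
so the defining inequality indeed fails at $n^\ast = \pi(k)$, and the reduction above gives $N(k) > \pi(k)$.

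The remaining range $1 < k < 2$ is trivial: here $\pi(k) = 0$, while $N(k)$ is a positive integer, so $N(k) \geq 1 > 0 = \pi(k)$. I do not expect any genuine obstacle in this argument; the only point demanding care is the reduction step (interpreting failure of the inequality at one index as a lower bound on the minimal threshold) together with the elementary verification that the ceiling strictly increases the prime index. Everything else is immediate from Corollary \ref{k205}, so the proof is essentially as short as the statement.
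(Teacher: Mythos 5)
Your proof is correct and is essentially the paper's own argument: for $k \geq 2$ the paper likewise invokes $R_{\pi(k)}^{(k)} = p_{\pi(k)}$ (Proposition \ref{p204}, of which Corollary \ref{k205} is the immediate consequence you cite) to conclude that the defining inequality fails at $n = \pi(k)$, and dismisses $1 < k < 2$ as trivial since $\pi(k)=0$. You merely spell out two details the paper leaves implicit --- the ceiling estimate $\lceil k\pi(k)/(k-1)\rceil \geq \pi(k)+1$ and the reduction from a single failing index to the lower bound on the minimal threshold --- which is fine.
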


\begin{proof}
First, let $k \geq 2$. Using Proposition \ref{p204}, we get
\begin{equation} \label{319}
R_{\pi(k)}^{(k)} < p_{\lceil k\pi(k)/(k-1) \rceil}.
\end{equation}
Hence, $N(k) > \pi(k)$ for every $k \geq 2$. The asserted inequality clearly holds for every $1<k<2$.
\end{proof}

\noindent
In order to prove a sharper lower bound for $N(k)$, see Theorem \ref{t311}, we need the following lemma.

\begin{lem} \label{l309}
Let $r,s \in \R$ with $r > s > 0$. If $t \geq s/r \cdot R_{\pi(r)}^{(r/s)}$, then
\begin{displaymath}
\pi(r) + \pi(t) \leq \pi \left( \frac{rt}{s} \right).
\end{displaymath}
\end{lem}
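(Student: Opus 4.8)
The plan is to deduce the inequality directly from the defining property of the generalized Ramanujan prime, after a single change of variable. First I would introduce $k = r/s$; this is legitimate since $r > s > 0$ guarantees $k > 1$, so that the symbol $R_{\pi(r)}^{(k)} = R_{\pi(r)}^{(r/s)}$ appearing in the hypothesis is well defined. Multiplying the assumption $t \geq (s/r)R_{\pi(r)}^{(r/s)}$ through by $r/s = k$ rewrites it as $rt/s \geq R_{\pi(r)}^{(k)}$, which is the form I actually want to use.

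Next I would apply the definition of $R_{\pi(r)}^{(k)}$ to the point $x = rt/s$. Because $x \geq R_{\pi(r)}^{(k)}$, the definition of the $\pi(r)$th $k$-Ramanujan prime gives $\pi(x) - \pi(x/k) \geq \pi(r)$. Finally I would substitute the two elementary identities $x = rt/s = kt$ and $x/k = (rt/s)/(r/s) = t$, so that this last inequality becomes $\pi(rt/s) - \pi(t) \geq \pi(r)$; rearranging yields exactly the claimed bound $\pi(r) + \pi(t) \leq \pi(rt/s)$.

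I do not expect any genuine difficulty here: the whole content of the lemma is the recognition that the hypothesis on $t$ is precisely the threshold past which the definition of $R_{\pi(r)}^{(r/s)}$ forces the interval $(x/k, x] = (t, rt/s]$ to contain at least $\pi(r)$ primes. The only points deserving a line of verification are the algebraic identities $rt/s = kt$ and $(rt/s)/k = t$, both immediate from $k = r/s$, and, if one wishes to be exhaustive, the degenerate case $\pi(r) = 0$ (i.e. $r < 2$), in which the claim reduces to the monotonicity statement $\pi(t) \leq \pi(rt/s)$ and holds trivially because $rt/s \geq t$.
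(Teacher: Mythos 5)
Your proof is correct and is essentially the paper's own argument: the paper likewise rewrites the hypothesis as $rt/s \geq R_{\pi(r)}^{(r/s)}$ and invokes the definition of $R_n^{(k)}$ with $x = rt/s$, $k = r/s$, $n = \pi(r)$; you have merely spelled out the substitutions that the paper leaves implicit. The aside about $\pi(r)=0$ is harmless but moot, since $R_{\pi(r)}^{(r/s)}$ appearing in the hypothesis already presupposes $\pi(r) \in \N$.
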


\begin{proof}
Since $rt/s \geq R_{\pi(r)}^{(r/s)}$, the claim follows from the definition of $R_n^{(k)}$.
\end{proof}

\begin{prop} \label{p310}
If $m,n \in \N$ with $m,n \geq 5$ and $\max \{m,n\} \geq 18$, then
\begin{displaymath}
\pi(m) + \pi(n) \leq \pi \left( \frac{mn}{3} \right).
\end{displaymath}
\end{prop}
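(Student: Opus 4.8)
The statement $\pi(m)+\pi(n)\le\pi(mn/3)$ is a strengthening of Lemma~\ref{l203} with the product $mn$ replaced by $mn/3$, and the natural engine to prove it is Lemma~\ref{l309} with the specific choice $r=3$, $s=1$, so that $r/s=3$ is exactly the modulus appearing in the claim. The plan is to reduce the proposition to a single application of this lemma together with a finite verification for the remaining small cases.

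First I would apply Lemma~\ref{l309} with $r=3$ and $s=1$. Its hypothesis requires $t\ge (s/r)R_{\pi(r)}^{(r/s)}=R_{\pi(3)}^{(3)}/3=R_2^{(3)}/3$, and its conclusion is precisely $\pi(3)+\pi(t)\le\pi(3t/3)=\pi(t)$\,—\,wait, this degenerates, so the symmetric roles of $m$ and $n$ must be exploited instead. The correct route is to fix whichever of $m,n$ is large (say $n\ge 18$) and set $r=m$, $s=3/n\cdot$\,(something); more cleanly, I would instead take $r=m$ and $s$ chosen so that $r/s$ and the threshold match. Concretely, apply Lemma~\ref{l309} with $r=m$ and $s=3m/n$, giving $r/s=n/3$ and the conclusion $\pi(m)+\pi(n)\le\pi(mn/3)$, valid once $n\ge (3m/n)^{-1}\!\cdot$\,\dots; the bookkeeping here is what must be gotten right, but the essential point is that for fixed small $m\in\{5,\dots,17\}$ one obtains an explicit finite threshold on $n$ from the known value of the relevant $k$-Ramanujan prime.

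The main steps are therefore: (i) split according to which variable is at least $18$, using symmetry to assume $n\ge 18$; (ii) invoke Lemma~\ref{l309} to obtain the inequality for all $n$ beyond an explicit bound depending on $m$, where the bound comes from a Ramanujan-prime value $R_{\pi(r)}^{(r/s)}$ that can be read off from tables or from the earlier propositions (Corollary~\ref{k205}, Proposition~\ref{p204}); and (iii) verify the finitely many remaining pairs $(m,n)$ with $5\le m,n$, $\max\{m,n\}\ge 18$, and $n$ below the threshold by direct computation of $\pi$.

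The hard part will be step (ii): arranging the parameters $r,s$ in Lemma~\ref{l309} so that the conclusion reads exactly $\pi(m)+\pi(n)\le\pi(mn/3)$ while keeping the hypothesis $t\ge (s/r)R_{\pi(r)}^{(r/s)}$ effective and finite. In particular one needs $r/s$ to be a legitimate value $>1$ for which the Ramanujan prime $R_{\pi(r)}^{(r/s)}$ is known or bounded, and this forces a careful matching; since $\pi(r)$ must be a positive integer, the cleanest choice is to keep $r$ an integer (e.g.\ one of the endpoints) so that $R_{\pi(r)}^{(\cdot)}$ is directly accessible. Once the threshold is pinned down, the residual finite check is routine, since both $\pi(m)+\pi(n)$ and $\pi(mn/3)$ are computed for only a bounded box of pairs.
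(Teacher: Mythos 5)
Your instinct to build the proof on Lemma~\ref{l309} matches one half of the paper's argument, but the proposal fails in two places, one of them fatal. First, the parameter bookkeeping you yourself flag as ``what must be gotten right'' is in fact broken: in Lemma~\ref{l309} the conclusion is $\pi(r)+\pi(t)\leq\pi(rt/s)$, so with your concrete choice $r=m$, $s=3m/n$ one gets $rt/s=tn/3$; taking $t=n$ yields $\pi(m)+\pi(n)\leq\pi(n^2/3)$, which is weaker than the claim once $n\geq m$, while taking $t=m$ yields only $2\pi(m)\leq\pi(mn/3)$, whose left-hand side falls short of $\pi(m)+\pi(n)$. No choice of $t$ produces the desired inequality. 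The workable choice is the plain one, $s=3$ with $\{r,t\}=\{m,n\}$, which is exactly what the paper uses. Note also that the thresholds $\frac{3}{r}R_{\pi(r)}^{(r/3)}$ cannot be ``read off'' from Corollary~\ref{k205} or Proposition~\ref{p204}: those results evaluate $R_j^{(k)}$ only for indices $j\leq\pi(k)$, whereas here the index $\pi(r)$ far exceeds $\pi(r/3)$, so the values must be computed directly from the definition (this is what the paper's small table for $r=18,19,20$ records).

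The genuine gap is the region where \emph{both} variables are large. After your symmetry reduction, $m$ still ranges over all integers $\geq 5$, including all $m\geq 18$; your plan (one application of Lemma~\ref{l309} for each small value $m\in\{5,\dots,17\}$, plus a finite box check) covers only pairs with $\min\{m,n\}\leq 17$. Each application of Lemma~\ref{l309} fixes $r$, hence fixes $\pi(r)$, and therefore yields the inequality only along the one-dimensional set $\{r\}\times[T_r,\infty)$; finitely many such applications together with any bounded computation can never exhaust the two-dimensionally unbounded region $\{(m,n)\colon m,n\geq 18\}$. Some estimate uniform in both variables is indispensable, and this is precisely where the paper's proof departs from yours: for $m\geq n\geq 20$ it invokes Trost's bound $\pi(x)<8x/(5\log x)$ \cite{tr} and the Rosser--Schoenfeld lower bound for $\pi(x)$ \cite{rs} to get
\begin{displaymath}
\pi\left(\frac{mn}{3}\right)-\pi(m) \geq \frac{mn}{3\log(mn/3)}-\frac{8m}{5\log m} \geq \frac{8m}{5\log m} > \pi(n),
\end{displaymath}
and only then handles the remaining pairs with Lemma~\ref{l309}. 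Your proposal contains no analytic estimate for $\pi$ at all, so as described it cannot be completed.
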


\begin{proof}
Without loss of generality, let $m \geq n$. First, we consider the case $m \geq n \geq 20$. By \cite[p.\ 60]{tr}, we have $\pi(x) < 8x/(5\log x)$ for every
$x > 1$. Using the estimate $\pi(x) > x/\log x$ from \cite{rs}, we get
\begin{displaymath}
\pi \left( \frac{mn}{3} \right) - \pi(m) \geq \frac{mn}{3\log(mn/3)} - \frac{8m}{5\log m} \geq \frac{20m}{6\log m} - \frac{8m}{5\log m} =
\frac{52m}{30\log m} \geq \frac{8n}{5\log n} > \pi(n).
\end{displaymath}
So the proposition is proved, when $m \geq n \geq 20$. To obtain the required inequality for every $m \geq 18$ and $\min \{ m, 20 \} \geq n \geq 5$, we
consider the following table:
\begin{center}
\begin{tabular}{|c||c|c|c|c|c|c|c|c|c|c|c|c|c|c|c|c|}
\hline
$r$                                                        &5&6&7&8&9&10&11&12&13&14&15&16&17&18&19&20\\ \hline
$\max \{ r, \lceil 3/r \cdot R_{\pi(r)}^{(r/3)} \rceil \}$ &18&9&9&8&9&10&11&12&13&14&15&16&17&18&19&20 \\ \hline
\end{tabular}.
\end{center}
We apply Lemma \ref{l309} with $s=3$, $r=n$ and $t=m$.
\end{proof}

\begin{thm} \label{t311}
For every $k > 1$,
\begin{displaymath}
N(k) \geq \pi(3k)-1.
\end{displaymath}
\end{thm}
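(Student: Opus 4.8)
The plan is to work directly from the definition of $N(k)$: since $N(k)$ is the least $m$ for which $R_n^{(k)} > p_{\lceil kn/(k-1)\rceil}$ holds for \emph{all} $n \geq m$, it suffices to exhibit one $n \geq \pi(3k)-2$ at which this strict inequality fails. I will take $n = \pi(3k)-2$ and show
\[
R_{\pi(3k)-2}^{(k)} \leq p_{\lceil k(\pi(3k)-2)/(k-1)\rceil},
\]
which forces $N(k) > \pi(3k)-2$, i.e. $N(k) \geq \pi(3k)-1$. The range $1 < k < 5/3$ is trivial, for there $\pi(3k)=2$ and the claim reads $N(k)\geq 1$, true by definition; so I may assume $k \geq 5/3$, whence $n = \pi(3k)-2 \geq 1$.

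The argument now splits into two independent parts. First, a cheap remark on the prime index: since $k/(k-1) > 1$ and $\pi(3k)-2$ is a positive integer, one has $\tfrac{k(\pi(3k)-2)}{k-1} > \pi(3k)-2$, and therefore $\lceil k(\pi(3k)-2)/(k-1)\rceil \geq \pi(3k)-1$. Consequently it is enough to prove the sharper estimate $R_{\pi(3k)-2}^{(k)} \leq p_{\pi(3k)-1}$, which by the definition of $R_n^{(k)}$ amounts to showing that $\pi(x) - \pi(x/k) \geq \pi(3k)-2$ for every $x \geq p_{\pi(3k)-1}$.

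I will verify this last inequality on three ranges. For $x \in [\,p_{\pi(3k)-1}, 3k)$ one has $\pi(x) \geq \pi(p_{\pi(3k)-1}) = \pi(3k)-1$, while $x/k < 3$ forces $\pi(x/k) \leq 1$, giving the difference $\geq \pi(3k)-2$. For $x \in [3k, 5k)$ one has $\pi(x) \geq \pi(3k)$ and $x/k \in [3,5)$ gives $\pi(x/k)=2$, again $\geq \pi(3k)-2$. For $x \geq 5k$ I invoke Proposition \ref{p310}: with $M = \lfloor 3k\rfloor$ and $j = \lfloor x/k\rfloor \geq 5$ one has $Mj/3 \leq kj \leq x$, so that
\[
\pi(x) \geq \pi(Mj/3) \geq \pi(M) + \pi(j) = \pi(3k) + \pi(x/k),
\]
whence $\pi(x) - \pi(x/k) \geq \pi(3k)$. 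Combining the three ranges yields $R_{\pi(3k)-2}^{(k)} \leq p_{\pi(3k)-1}$ and hence the theorem.

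The main obstacle is the third range, where Proposition \ref{p310} demands that both arguments be at least $5$ and their maximum at least $18$. The bound $j \geq 5$ is automatic from $x \geq 5k$, and $\max\{M,j\}\geq 18$ holds whenever $k \geq 6$ (then $M \geq 18$) or $x \geq 18k$ (then $j \geq 18$); the hypothesis fails only in the finite box $5/3 \leq k < 6$, $5k \leq x < 18k$. There $3k < 18$, so $\pi(3k) \leq 7$ and only a handful of small values $n = \pi(3k)-2$ arise with $x/k < 18$; I expect to settle these by direct computation of the relevant $R_n^{(k)}$, exactly in the spirit of the tabulated cases in the proof of Proposition \ref{p310}, since the cruder estimate $\pi(m)+\pi(n)\leq\pi(mn)$ of Lemma \ref{l203} is too weak to reach $\pi(3k)-2$ in this regime.
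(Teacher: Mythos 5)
Your proof follows the same route as the paper's: reduce the theorem to the single chain $R_{\pi(3k)-2}^{(k)} \leq p_{\pi(3k)-1} \leq p_{\lceil k(\pi(3k)-2)/(k-1)\rceil}$, verify $\pi(x)-\pi(x/k) \geq \pi(3k)-2$ on the three ranges $[p_{\pi(3k)-1},3k)$, $[3k,5k)$, $[5k,\infty)$, and invoke Proposition \ref{p310} on the last range. For large $k$ your argument is complete and even marginally cleaner than the paper's: by working with $M=\lfloor 3k\rfloor$ you cover every $k\geq 6$ rigorously, whereas the paper restricts to $k\geq 19/3$ and applies Proposition \ref{p310} to the real argument $3k$ without comment.

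The gap is the leftover region $5/3 \leq k < 6$, $5k \leq x < 18k$. You call it a ``finite box'' to be settled ``by direct computation of the relevant $R_n^{(k)}$'', but $k$ runs through a continuum there, so the family of numbers $R_{\pi(3k)-2}^{(k)}$ you propose to compute is infinite; as written this is not a finite verification, and nothing in your sketch says how to make it one. The missing ingredient is Proposition \ref{p201}(i), the monotonicity of $R_n^{(k)}$ in $k$: on each interval $p_i/3 \leq k < p_{i+1}/3$ (the values $i=3,\ldots,7$ cover $[5/3,6)$) both the index $n=\pi(3k)-2=i-2$ and the target $p_{\pi(3k)-1}=p_{i-1}$ are constant, and $R_{i-2}^{(k)} \leq R_{i-2}^{(p_i/3)}$, so it suffices to check the five concrete inequalities $R_{i-2}^{(p_i/3)} \leq p_{i-1}$ (for instance $R_1^{(5/3)}=p_1$ by Proposition \ref{p206}(ii), and $R_2^{(7/3)}=5=p_3$). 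This is exactly how the paper disposes of the range $1<k<19/3$: it checks $R_{\pi(3k)-2}^{(k)} \leq p_{\lceil k(\pi(3k)-2)/(k-1)\rceil}$ interval by interval, starting from $R_{\pi(3k)-2}^{(k)} \leq R_1^{(5/3)}$ on $[5/3,7/3)$. With that reduction added, your argument closes and proves the theorem.
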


\begin{proof}
For every $1 < k < 5/3$ the claim is obviously true. For every $5/3 \leq k < 7/3$, we have $\pi(3k)-2=1$. Hence,
\begin{displaymath}
R_{\pi(3k)-2}^{(k)} \leq R_1^{(5/3)} = p_1 < p_{\lceil k(\pi(3k)-2)/(k-1) \rceil};
\end{displaymath}
i.e., $N(k) > \pi(3k)-2$. Similarly, for every $p_i/3 \leq k < p_{i+1}/3$, where $i=4,\ldots,8$, we check that
\begin{displaymath}
R_{\pi(3k)-2}^{(k)} \leq p_{\lceil k(\pi(3k)-2)/(k-1) \rceil}.
\end{displaymath}
Hence our theorem is proved for every $1 < k < 19/3$. Now, let $k \geq 19/3$. For $p_{\pi(3k)-1} \leq x < 3k$ and for $3k \leq x < 5k$ it is easy to see
that $\pi(x) - \pi(x/k) \geq \pi(3k) - 2$. So let $x \geq 5k$ and let $m\in \N$ be such that $m \geq 5$ and $mk \leq x < (m+1)k$. Since $\lfloor 3k \rfloor
\geq 19$, we use Proposition \ref{p310} to get the inequality
\begin{displaymath}
\pi(x) - \pi \left( \frac{x}{k} \right) \geq \pi(mk) - \pi(m) \geq \pi \left( \frac{\lfloor 3k \rfloor m}{3} \right) - \pi(m) \geq \pi(\lfloor 3k \rfloor)
= \pi(3k).
\end{displaymath}
Hence, altogether we have
\begin{displaymath}
\pi(x) - \pi \left( \frac{x}{k} \right) \geq \pi(3k)-2
\end{displaymath}
for every $x \geq p_{\pi(3k)-1}$ and it follows
\begin{equation} \label{320}
R_{\pi(3k)-2}^{(k)} \leq p_{\pi(3k)-1} \leq p_{\lceil k(\pi(3k)-2)/(k-1) \rceil}.
\end{equation}
Therefore $N(k) > \pi(3k)-2$.
\end{proof}

\begin{rema}
The proof of Theorem \ref{t311} yields $R_{\pi(3k)}^{(k)} \leq p_{\pi(5k)}$ for every $k \geq 19/3$.
\end{rema}

\noindent
From Theorem \ref{t307} and Theorem \ref{t311}, we obtain the following explicit formula for $N(k)$.

\begin{kor} \label{k312}
If $k \geq 745.8$, then
\begin{displaymath}
N(k) = \pi(3k) - 1.
\end{displaymath}
\end{kor}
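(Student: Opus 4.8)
The plan is to obtain the claimed equality by sandwiching $N(k)$ between the matching upper and lower bounds already established in this section. The entire content of the corollary is that these two bounds coincide precisely on the range $k \geq 745.8$, so the proof is purely a matter of invoking the right results with the right hypotheses.

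First I would appeal to Theorem \ref{t307}, whose hypothesis is exactly $k \geq 745.8$, to secure the upper bound $N(k) \leq \pi(3k)-1$. Next I would apply Theorem \ref{t311}, which holds for every $k > 1$ and in particular for all $k \geq 745.8$, to obtain the reverse inequality $N(k) \geq \pi(3k)-1$. Combining the two displayed inequalities immediately forces $N(k) = \pi(3k)-1$ on the stated range, and the proof is complete.

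Since the substantive work has already been done, there is no real obstacle at this stage: the only point requiring any care is that the two theorems must be simultaneously applicable, and this is guaranteed because the threshold $k \geq 745.8$ of Theorem \ref{t307} lies inside the unrestricted range $k > 1$ of Theorem \ref{t311}. The genuine difficulty was concentrated earlier — namely in the upper bound of Theorem \ref{t307}, where one had to verify that the auxiliary quantity $\widetilde r(k)$ satisfies $\widetilde r(k) \leq 3$ for $k \geq 745.8$ (so that $X_4 = 3k$) and that $\pi(3k)+1 \leq k$ on the same range, thereby upgrading the generic bound \eqref{312} coming from Theorem \ref{t301} to the sharp value $\pi(3k)-1$. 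The matching lower bound in Theorem \ref{t311} relied in turn on the subadditivity-type estimate of Proposition \ref{p310}. With both one-sided bounds in hand, the corollary itself is an immediate consequence.
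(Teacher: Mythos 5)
Your proof is correct and matches the paper exactly: the corollary is obtained by combining the upper bound of Theorem \ref{t307} (valid precisely for $k \geq 745.8$) with the lower bound of Theorem \ref{t311} (valid for all $k > 1$). Your additional remarks about where the real work lies (the bound $\widetilde r(k) \leq 3$ and Proposition \ref{p310}) are accurate but not needed for the corollary itself.
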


\subsubsection{An explicit formula for $N_0(k)$}

By replacing ``$>$'' with ``$\geq$'' in the definition \eqref{Nk05} of $N(k)$, we get the following

\begin{defi}
For $k > 1$, let
\begin{displaymath}
N_0(k) = \min \{ m \in \N \mid R_n^{(k)} \geq p_{\lceil kn/(k-1) \rceil} \; \text{for every} \; n \geq m \}.
\end{displaymath}
\end{defi}

\noindent
Since $N_0(k) >\pi(k)$ for every $1<k<2$, it follows from \eqref{319} that
\begin{displaymath}
N_0(k) > \pi(k)
\end{displaymath}
is fulfilled for every $k > 1$. In the following case we obtain a sharper lower bound for $N_0(k)$.

\begin{thm} \label{t313}
If $k \geq 11/3$, then
\begin{displaymath}
N_0(k) \geq \pi(2k).
\end{displaymath}
\end{thm}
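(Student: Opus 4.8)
The plan is to negate the defining property of $N_0(k)$: to prove $N_0(k)\geq\pi(2k)$ it suffices to exhibit a single index $n\geq\pi(2k)-1$ for which $R_n^{(k)}<p_{\lceil kn/(k-1)\rceil}$, since then no $m\leq\pi(2k)-1$ can meet the definition of $N_0(k)$. The natural candidate is $n=\pi(2k)-1$. I would first dispose of the right-hand side. Writing $a=\pi(2k)-1$, we have $ka/(k-1)=a+a/(k-1)>a$ whenever $a\geq 1$, which holds for $k\geq 11/3$ because then $\pi(2k)\geq 2$. Hence $\lceil k(\pi(2k)-1)/(k-1)\rceil\geq\pi(2k)$, so $p_{\lceil k(\pi(2k)-1)/(k-1)\rceil}\geq p_{\pi(2k)}$, and the theorem reduces to the single estimate
\[
R_{\pi(2k)-1}^{(k)}\leq p_{\pi(2k)-1},
\]
because then $R_{\pi(2k)-1}^{(k)}\leq p_{\pi(2k)-1}<p_{\pi(2k)}\leq p_{\lceil k(\pi(2k)-1)/(k-1)\rceil}$.

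By the definition of $R_{\pi(2k)-1}^{(k)}$, the bound $R_{\pi(2k)-1}^{(k)}\leq p_{\pi(2k)-1}$ follows once I show that $\pi(x)-\pi(x/k)\geq\pi(2k)-1$ holds for every $x\geq p_{\pi(2k)-1}$. I would verify this by partitioning $[p_{\pi(2k)-1},\infty)$. For $p_{\pi(2k)-1}\leq x<2k$ one has $x/k<2$, hence $\pi(x/k)=0$ and $\pi(x)\geq\pi(2k)-1$. For $2k\leq x<3k$ one has $\pi(x/k)=1$ and $\pi(x)\geq\pi(2k)$, giving equality in the worst case. For $4k\leq x<5k$ one has $\pi(x/k)=2$, and Bertrand's postulate supplies a prime in $(2k,4k]$, so $\pi(x)-\pi(x/k)\geq\pi(4k)-2\geq\pi(2k)-1$. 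Finally, for $x\geq 5k$, writing $mk\leq x<(m+1)k$ with $m\geq 5$, I would bound $\pi(x)-\pi(x/k)\geq\pi(mk)-\pi(m)$ and invoke Proposition \ref{p310} with the two arguments $m$ and $3k$ to obtain $\pi(mk)-\pi(m)\geq\pi(3k)\geq\pi(2k)-1$. (The remaining range $3k\leq x<4k$ is treated next.)

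The main obstacle is precisely the range $3k\leq x<4k$, where $\pi(x/k)=2$ and the required inequality becomes $\pi(3k)-\pi(2k)\geq 1$, i.e. the existence of a prime in $(2k,3k]$. This is where the hypothesis $k\geq 11/3$ is forced: a prime fails to lie in $(2k,3k]$ exactly when the prime following $2k$ exceeds $3k$, which needs two consecutive primes $p<p'$ with $p'>\tfrac32 p$. The only such pairs are among the small primes, the largest being $(7,11)$, which obstructs $(2k,3k]$ precisely for $k\in[7/2,11/3)$; for $p\geq 11$ one has $p'\leq\tfrac32 p$ (checking $11\leq p<25$ by hand and using a Nagura-type bound for $p\geq 25$). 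Hence for $k\geq 11/3$ a prime always lies in $(2k,3k]$, which is exactly the threshold in the statement.

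The remaining bookkeeping point is that Proposition \ref{p310} requires $\max\{m,3k\}\geq 18$, so the argument for $x\geq 5k$ goes through verbatim only for $k\geq 6$, where $3k\geq 18$. For $11/3\leq k<6$ I would instead verify the finitely many configurations directly, partitioning by the breakpoints $p_i/2$ and $p_i/3$ as in the proof of Theorem \ref{t311}; here $\pi(mk)-\pi(m)\geq\pi(2k)-1$ is confirmed by a crude count (it is tightest near $m=5$ and $k=11/3$, where $\pi(5k)-\pi(5)\geq 4$ already suffices). Combining the two regimes completes the verification of $\pi(x)-\pi(x/k)\geq\pi(2k)-1$ for all $x\geq p_{\pi(2k)-1}$, hence the estimate $R_{\pi(2k)-1}^{(k)}\leq p_{\pi(2k)-1}$ and the theorem.
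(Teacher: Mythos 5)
Your proof is correct, and its skeleton is the same as the paper's: both exhibit the single index $n=\pi(2k)-1$, reduce the theorem to showing $\pi(x)-\pi(x/k)\geq\pi(2k)-1$ for every $x\geq p_{\pi(2k)-1}$ (the paper's inequality \eqref{315}), and conclude with $R_{\pi(2k)-1}^{(k)}\leq p_{\pi(2k)-1}<p_{\lceil k(\pi(2k)-1)/(k-1)\rceil}$, which is exactly the paper's \eqref{318}. The two places where you diverge are local. On $[3k,5k)$ the paper simply cites $\pi(3t)-\pi(2t)\geq 1$ for $t\geq 11/3=\tfrac{1}{3}R_1^{(3/2)}$, i.e.\ the known value $R_1^{(3/2)}=11$, whereas you re-derive this prime-in-$(2k,3k]$ statement from consecutive prime gaps; your pair analysis (extremal pair $(7,11)$, Nagura-type bound beyond) is a correct standalone proof of the same fact and makes transparent why $11/3$ is the exact threshold. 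On $[5k,\infty)$ the paper does \emph{not} invoke Proposition \ref{p310}; instead it derives the companion inequality $\pi(m)+\pi(n)\leq\pi(mn/2)$ for $m,n\geq 4$ with $\max\{m,n\}\geq 6$ (its \eqref{316}) and applies it to the pair $(l,2k)$, which covers all $k\geq 11/3$ at once because $2k>7$. You apply Proposition \ref{p310} to $(m,3k)$, which drags in the hypothesis $\max\{m,3k\}\geq 18$ and therefore a finite verification for $11/3\leq k<6$, $5\leq m\leq 17$; that check is genuinely finite and does succeed (there $\pi(2k)-1\leq 4$, and the tightest case is $m=5$, $k=11/3$, where $\pi(55/3)-\pi(5)=4\geq\pi(22/3)-1=3$), so your argument closes, but the paper's ``$mn/2$'' inequality buys uniformity and avoids the case check, at the cost of proving one more auxiliary estimate.
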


\begin{proof}
First, we show that
\begin{equation} \label{321}
\pi(x) - \pi \left( \frac{x}{k} \right) \geq \pi(2k)-1
\end{equation}
for every $x \geq p_{\pi(2k)-1}$. For $p_{\pi(2k)-1} \leq x < 2k$ and for $2k \leq x < 3k$, the inequality \eqref{321} is obviously true. Let $3k \leq x <
5k$. Since $\pi(3t) - \pi(2t) \geq 1$ for every $t \geq 11/3 = 1/3 \cdot R_1^{(3/2)}$, it follows $\pi(x) - \pi(x/k) \geq \pi(3k) - 2 \geq \pi(2k)-1$. So
let $x \geq 5k$ and let $l \in \N$ be such that $l \geq 5$ and $lk \leq x < (l+1)k$. Similarly to the proof of Proposition \ref{p310}, we get that
\begin{equation} \label{322}
\pi(m) + \pi(n) \leq \pi \left( \frac{mn}{2} \right)
\end{equation}
for every $m,n \geq 4$ with $\max \{ m,n \} \geq 6$. Since $k \geq 11/3$ we get $\lfloor 2k \rfloor > 7$, and, using \eqref{322}, we obtain the inequality
\begin{equation} \label{323}
\pi(x) - \pi \left( \frac{x}{k} \right) \geq \pi(lk) - \pi(l) \geq \pi \left( \frac{ \lfloor 2k \rfloor l}{2} \right) - \pi(l) \geq \pi(\lfloor 2k \rfloor)
= \pi(2k)
\end{equation}
for every $x \geq 5k$. Hence, we proved that the inequality \eqref{321} holds for every $x \geq p_{\pi(2k)-1}$. So, by the definition of $R_n^{(k)}$,
\begin{equation} \label{324}
R_{\pi(2k)-1}^{(k)} \leq p_{\pi(2k)-1} < p_{\lceil k(\pi(2k)-1)/(k-1) \rceil},
\end{equation}
which gives the required inequality.
\end{proof}

\noindent
Using \eqref{324}, we get an improvement of Corollary \ref{k205}.

\begin{kor} \label{k314}
If $k \geq 11/3$, then $R_n^{(k)} = p_n$ for every $1 \leq n \leq \pi(2k)-1$.
\end{kor}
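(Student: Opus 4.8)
The plan is to extract the single equality $R_{\pi(2k)-1}^{(k)} = p_{\pi(2k)-1}$ from what has already been established, and then let Proposition \ref{p202} do the rest. The decisive ingredient is the chain \eqref{318} proved in Theorem \ref{t313}, whose first inequality gives, for $k \geq 11/3$,
\begin{displaymath}
R_{\pi(2k)-1}^{(k)} \leq p_{\pi(2k)-1}.
\end{displaymath}
Thus the upper bound needed to pin down the $(\pi(2k)-1)$th $k$-Ramanujan prime is already in hand; no new estimate on $\pi(x)$ is required.

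Next I would invoke the universal lower bound from Proposition \ref{p201}(ii), namely $R_n^{(k)} \geq p_n$ for every $n \in \N$ and every $k > 1$. Applying this with $n = \pi(2k)-1$ yields $R_{\pi(2k)-1}^{(k)} \geq p_{\pi(2k)-1}$. Combining this with the upper bound of the previous step forces the equality
\begin{displaymath}
R_{\pi(2k)-1}^{(k)} = p_{\pi(2k)-1}.
\end{displaymath}

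Finally, I would propagate this single equality downward. Since $R_{\pi(2k)-1}^{(k)} = p_{\pi(2k)-1}$, Proposition \ref{p202} (applied with $n = \pi(2k)-1$) immediately gives $R_m^{(k)} = p_m$ for every $m \leq \pi(2k)-1$, which is exactly the assertion of the corollary. I do not expect any genuine obstacle here: the entire analytic difficulty has been absorbed into Theorem \ref{t313}, and this corollary is a clean two-sided squeeze followed by the monotonicity-type reduction of Proposition \ref{p202}. The only point demanding a moment's care is that the hypothesis $k \geq 11/3$ is precisely the range in which \eqref{318} was established, so the result is stated under that same restriction.
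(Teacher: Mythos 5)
Your proposal is correct and matches the paper's own proof exactly: the paper likewise combines Proposition \ref{p201}(ii), the left inequality in \eqref{318}, and Proposition \ref{p202} to obtain the result. Nothing to add.
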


\begin{proof}
Follows from Proposition \ref{p201}(ii), the left inequality in \eqref{324} and Proposition \ref{p202}.
\end{proof}

\noindent
To prove an upper bound for $N_0(k)$, the following proposition will be useful.

\begin{prop} \label{p315}
If $k \geq 29/3$, then
\begin{displaymath}
R_{\pi(2k)}^{(k)} = p_{\pi(2k)+1}.
\end{displaymath}
\end{prop}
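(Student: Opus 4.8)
The plan is to split the equality into the two bounds $R_{\pi(2k)}^{(k)} \ge p_{\pi(2k)+1}$ and $R_{\pi(2k)}^{(k)} \le p_{\pi(2k)+1}$; writing $N = \pi(2k)$ for brevity, their combination gives the assertion. The lower bound is immediate from the definition. Since $2k/k = 2$, evaluating the counting difference at $x = 2k$ gives $\pi(2k) - \pi(2k/k) = N - \pi(2) = N-1 < N$. Hence no admissible threshold $m$ can satisfy $m \le 2k$, so $R_N^{(k)} > 2k$; as $R_N^{(k)} \in \Pz$ and $p_{N+1}$ is the smallest prime exceeding $2k$, this forces $R_N^{(k)} \ge p_{N+1}$. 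This part needs nothing beyond the definition of $R_N^{(k)}$.

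For the upper bound I must show $\pi(x) - \pi(x/k) \ge N$ for every $x \ge p_{N+1}$, which yields $R_N^{(k)} \le p_{N+1}$. Mirroring the proof of Theorem \ref{t313}, I would decompose $[p_{N+1}, \infty)$ into the three ranges $[p_{N+1}, 3k)$, $[3k, 5k)$ and $[5k, \infty)$. On the first range $\pi(x) \ge \pi(p_{N+1}) = N+1$, while $2 < x/k < 3$ forces $\pi(x/k) = 1$, so the difference is at least $N$. On the second range $\pi(x) \ge \pi(3k)$ and $3 \le x/k < 5$ gives $\pi(x/k) = 2$, so the difference is at least $\pi(3k) - 2$. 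On the third range I would argue exactly as in Theorem \ref{t313}: for $l \ge 5$ with $lk \le x < (l+1)k$ one has $\pi(x) \ge \pi(lk)$ and $\pi(x/k) \le \pi(l)$, and inequality \eqref{316} applied with $m = 2k$ and $n = l$ (whose hypotheses $m,n \ge 4$, $\max\{m,n\} \ge 6$ hold since $2k \ge 58/3$) yields $\pi(2k) + \pi(l) \le \pi(kl)$, so the difference is again at least $\pi(2k) = N$.

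The linchpin, and the step I expect to be the main obstacle, is the estimate $\pi(3k) - \pi(2k) \ge 2$ for every $k \ge 29/3$; this is precisely $\pi(3t) - \pi(2t) \ge 2$ for all real $t \ge \tfrac{1}{3} R_2^{(3/2)}$, i.e. the value $R_2^{(3/2)} = 29$ (only $R_2^{(3/2)} \le 29$ is needed here). This single fact does double duty: it closes the second range, since $\pi(3k) - 2 \ge \pi(2k)$, and it guarantees that $(2k, 3k]$ contains at least two primes, so that $p_{N+1} < 3k$ strictly. The strict inequality is essential: were $p_{N+1} = 3k$ the endpoint $x = p_{N+1}$ would fall where $x/k = 3$ and spuriously give difference $N-1$, so the first range genuinely lies in $x/k < 3$ exactly because of the key estimate. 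I would establish $R_2^{(3/2)} = 29$ either by quoting the known values of the small generalized Ramanujan primes or, self-containedly, by noting that for large $t$ an explicit bound (or the PNT) produces at least two primes in $(2t, 3t]$, while the remaining bounded range is a finite verification in which the only candidate failures occur just after $2t$ passes a prime, i.e. at $t = p/2$, where one checks that $(p, 3p/2]$ still contains two primes for all primes $p \ge 23$. Handling the boundary points $x = 3k$ and $x = 5k$ consistently then completes the argument.
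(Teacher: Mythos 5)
Your proof is correct and follows essentially the same route as the paper's: the identical lower-bound argument from the definition, the same decomposition of $[p_{\pi(2k)+1},\infty)$ into the ranges $[p_{\pi(2k)+1},3k)$, $[3k,5k)$, $[5k,\infty)$, the same key estimate $\pi(3t)-\pi(2t)\geq 2$ for $t\geq 29/3=\tfrac{1}{3}R_2^{(3/2)}$ on the middle range, and the same use of \eqref{316} (via \eqref{317}) for the tail. The only differences are presentational: you spell out details the paper leaves implicit, such as why the key estimate also forces $p_{\pi(2k)+1}<3k$.
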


\begin{proof}
Since $\pi(2k) - \pi(2k/k) < \pi(2k)$ and $p_{\pi(2k)} \leq 2k < p_{\pi(2k)+1}$, we have $R_{\pi(2k)}^{(k)} \geq p_{\pi(2k)+1}$ for every $k>1$. To prove 
$R_{\pi(2k)}^{(k)} \leq p_{\pi(2k)+1}$, it suffices to show that
\begin{equation} \label{325}
\pi(x) - \pi \left( \frac{x}{k} \right) \geq \pi(2k)
\end{equation}
for every $x \geq p_{\pi(2k)+1}$. It is clear that \eqref{325} is true for every $p_{\pi(2k)+1} \leq x < 3k$. Let $3k \leq x < 5k$. We have $\pi(3t) -
\pi(2t) \geq 2$ for every $t \geq 29/3 = 1/3 \cdot R_2^{(3/2)}$ and thus \eqref{325} holds. By \eqref{323} we already have that the inequality \eqref{325}
also holds for every $x \geq 5k$.
\end{proof}

\noindent
We can show more than in Corollary \ref{k314} for the following case.

\begin{kor} \label{k316}
If $k \geq 29/3$, then:
\begin{enumerate}
 \item[\emph{(i)}] $R_n^{(k)} = p_n$ if and only if $1 \leq n \leq \pi(2k)-1$.
 \item[\emph{(ii)}] $R_n^{(k)} = p_{n+1}$ if and only if $\pi(2k) \leq n \leq \pi(3k)-2$.
\end{enumerate}
\end{kor}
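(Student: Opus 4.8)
The plan is to reduce both equivalences to the monotonicity of a single auxiliary sequence. Since every $R_n^{(k)}$ is prime, I would write $R_n^{(k)} = p_{f(n)}$ with $f(n) = \pi(R_n^{(k)}) \in \N$. By Proposition \ref{p201}(ii) we have $f(n) \geq n$, and by Proposition \ref{p201}(iii) the sequence $(R_n^{(k)})_n$ is strictly increasing, so $f(n+1) > f(n)$, i.e. $f(n+1) \geq f(n)+1$. Hence $g(n) := f(n) - n$ is a nonnegative, non-decreasing integer sequence. The point of this reformulation is that $R_n^{(k)} = p_n \iff g(n) = 0$ and $R_n^{(k)} = p_{n+1} \iff g(n) = 1$; since $g$ is non-decreasing, it then suffices to pin down $g$ at a few anchor points and invoke monotonicity.

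For (i), the forward implication is immediate from Corollary \ref{k314} (applicable because $k \geq 29/3 > 11/3$), which gives $g(n) = 0$ for $1 \leq n \leq \pi(2k)-1$. For the converse I would use Proposition \ref{p315}, which states $R_{\pi(2k)}^{(k)} = p_{\pi(2k)+1}$, i.e. $g(\pi(2k)) = 1$. Monotonicity of $g$ then yields $g(n) \geq 1$ for all $n \geq \pi(2k)$, so $R_n^{(k)} \neq p_n$ there; combined with the forward implication this gives the stated equivalence.

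For (ii) I would squeeze $g$ between two anchors. The lower anchor is again $g(\pi(2k)) = 1$ from Proposition \ref{p315}. For the upper anchor, inequality \eqref{314} from the proof of Theorem \ref{t311} (valid since $k \geq 29/3 > 19/3$) gives $R_{\pi(3k)-2}^{(k)} \leq p_{\pi(3k)-1}$, that is $g(\pi(3k)-2) \leq 1$. Because $\pi(2k) \leq \pi(3k)-2$ — which holds since $\pi(3k) - \pi(2k) \geq 2$ for $k \geq 29/3$, exactly the inequality $\pi(3t)-\pi(2t)\geq 2$ for $t \geq 29/3$ used in the proof of Proposition \ref{p315} — monotonicity forces $g(n) = 1$ for every $\pi(2k) \leq n \leq \pi(3k)-2$, giving $R_n^{(k)} = p_{n+1}$ on this range. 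For the converse I still need an upper cutoff: evaluating the counting function at $x = 3k$ gives $\pi(3k) - \pi(3) = \pi(3k) - 2 < \pi(3k)-1$, so the defining inequality for $R_{\pi(3k)-1}^{(k)}$ fails at $x = 3k$ and hence $R_{\pi(3k)-1}^{(k)} > 3k \geq p_{\pi(3k)}$. As $R_{\pi(3k)-1}^{(k)}$ is prime, this forces $R_{\pi(3k)-1}^{(k)} \geq p_{\pi(3k)+1}$, i.e. $g(\pi(3k)-1) \geq 2$, whence $g(n) \geq 2$ for $n \geq \pi(3k)-1$. Together with $g(\pi(2k)-1) = 0$ this confines $\{n : g(n) = 1\}$ exactly to $\pi(2k) \leq n \leq \pi(3k)-2$.

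The main obstacle is not any single hard estimate but rather assembling the correct anchor values and ensuring the interval $[\pi(2k), \pi(3k)-2]$ is nonempty; the only genuinely arithmetic input beyond the cited propositions is the elementary evaluation at $x = 3k$ (which mirrors the fact $R_{\pi(3k)-1}^{(k)} > 3k$ already noted in the proof of Theorem \ref{t307}) and the bound $\pi(3k)-\pi(2k) \geq 2$. Once those are in place, the non-decreasing monotonicity of $g$ does all the remaining work.
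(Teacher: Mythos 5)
Your proposal is correct and takes essentially the same route as the paper: the same four anchors (Corollary \ref{k314}, Proposition \ref{p315}, inequality \eqref{314}, and the evaluation giving $R_{\pi(3k)-1}^{(k)} > p_{\pi(3k)}$, the paper's \eqref{allg}) combined with strict monotonicity of $(R_n^{(k)})_n$, which your non-decreasing sequence $g(n) = \pi(R_n^{(k)}) - n$ merely makes explicit. If anything, your derivation of the last anchor is a cleaner rendering of the paper's: you evaluate at $x = 3k$, which is the point actually needed, whereas the paper's displayed intermediate inequality is phrased at $x = p_{\pi(3k)}$.
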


\begin{proof}
(i) From Proposition \ref{p315}, we get $R_n^{(k)} > p_n$ for every $n \geq \pi(2k)$. So, if $R_n^{(k)} = p_n$ then $1 \leq n \leq \pi(2k)-1$. On the other
hand, if $1 \leq n \leq \pi(2k)-1$, we use Corollary \ref{k314} to obtain $R_n^{(k)} = p_n$.

(ii) Let $\pi(2k) \leq n \leq \pi(3k)-2$. By (i), we get
\begin{equation} \label{326}
R_n^{(k)} \geq p_{n+1}.
\end{equation}
From \eqref{320} we have $R_{\pi(3k)-2}^{(k)} \leq p_{\pi(3k)-1}$. Using \eqref{326}, we obtain
\begin{equation} \label{327}
R_{\pi(3k)-2}^{(k)} = p_{\pi(3k)-1}.
\end{equation}
Now we prove by induction, that
\begin{displaymath}
R_{\pi(3k)-2-j}^{(k)} = p_{\pi(3k)-1-j}
\end{displaymath}
for every $j =0,\ldots,\pi(3k)-\pi(2k)-2$. For $j=0$, see \eqref{327}. Let $j \in \{ 0,\ldots,\pi(3k)-\pi(2k)- 3 \}$. By \eqref{326}, we have
$R_{\pi(3k)-2-(j+1)}^{(k)} \geq p_{\pi(3k)-1-(j+1)}$. Using Proposition \ref{p201}, we obtain
\begin{equation} \label{328}
p_{\pi(3k)-1-(j+1)} \leq R_{\pi(3k)-2-(j+1)}^{(k)} < R_{\pi(3k)-2-j}^{(k)}.
\end{equation}
By the induction hypothesis, we know $R_{\pi(3k)-2-j}^{(k)} = p_{\pi(3k)-1-j}$. Now we use \eqref{328} to get $R_{\pi(3k)-2-(j+1)}^{(k)} =
p_{\pi(3k)-1-(j+1)}$. So, we proved by induction that $R_n^{(k)} = p_{n+1}$ for every $\pi(2k) \leq n \leq \pi(3k)-2$.

Now, let $n \in \N$ be such that $R_n^{(k)} = p_{n+1}$.
From (i) and \eqref{317}, we obtain $\pi(2k) \leq n \leq \pi(3k)-2$.
\end{proof}

\begin{rema}
Corollary \ref{k316} implies that for every $k \geq 29/3$ the prime numbers $p_{\pi(2k)}$ and $p_{\pi(3k)-1}$ are not $k$-Ramanujan primes.
\end{rema}

\begin{kor}
For each $m \in \N$ there exists $k = k(m) \geq 29/3$ such that $p_{\pi(2k)+1}, \ldots, p_{\pi(2k)+m}$ are all $k$-Ramanujan primes.
\end{kor}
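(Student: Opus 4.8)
The plan is to read off the desired primes directly from Corollary \ref{k316}(ii), which for every $k \geq 29/3$ exhibits an entire block of consecutive primes as $k$-Ramanujan primes, and then to make this block long enough to contain $m$ of them.

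First I would fix $m \in \N$ and recall from Corollary \ref{k316}(ii) that, for $k \geq 29/3$, one has $R_n^{(k)} = p_{n+1}$ exactly when $\pi(2k) \leq n \leq \pi(3k)-2$. As $n$ ranges over this interval, the index $n+1$ ranges over all integers from $\pi(2k)+1$ to $\pi(3k)-1$; hence every prime $p_j$ with $\pi(2k)+1 \leq j \leq \pi(3k)-1$ equals $R_{j-1}^{(k)}$ and is therefore a $k$-Ramanujan prime. In particular the $m$ consecutive primes $p_{\pi(2k)+1}, \ldots, p_{\pi(2k)+m}$ are all $k$-Ramanujan primes as soon as their largest index does not leave this block, i.e. as soon as $\pi(2k)+m \leq \pi(3k)-1$, equivalently
\begin{displaymath}
\pi(3k) - \pi(2k) \geq m+1 .
\end{displaymath}

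It then remains only to exhibit a value $k = k(m) \geq 29/3$ satisfying this inequality. Setting $x = 3k$, I would write $\pi(3k) - \pi(2k) = \pi(x) - \pi(2x/3)$, which by the PNT tends to infinity as $x \to \infty$ (it is asymptotic to $(x/\log x)/3$), exactly as already used in the introduction for the parameter $k = 3/2$. Thus there is a threshold $x_0$ with $\pi(x) - \pi(2x/3) \geq m+1$ for all $x \geq x_0$, and taking any $k \geq \max\{29/3, x_0/3\}$ forces $\pi(3k) - \pi(2k) \geq m+1$, completing the argument. I do not expect a genuine obstacle: once Corollary \ref{k316}(ii) is invoked, the statement collapses to the elementary fact that the number of primes in $(2k,3k]$ grows without bound. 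The only point deserving care is the index bookkeeping — ensuring that the top index $\pi(2k)+m$ stays at or below $\pi(3k)-1$ so that the whole requested run of primes lies inside the block supplied by Corollary \ref{k316}(ii).
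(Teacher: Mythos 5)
Your proof is correct and follows exactly the paper's own argument: the paper's proof is simply ``By PNT, $\pi(3k)-2-\pi(2k) \to \infty$ as $k \to \infty$. Then use Corollary \ref{k316}(ii).'' You have merely spelled out the index bookkeeping (the condition $\pi(3k)-\pi(2k) \geq m+1$) that the paper leaves implicit.
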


\begin{proof}
By PNT, we obtain $\pi(3k)-2-\pi(2k) \to \infty$ as $k \to \infty$. Then use Corollary \ref{k316}(ii).
\end{proof}

\noindent
The next lemma provides an upper bound for $N_0(k)$.

\begin{lem} \label{l317}
Let $X_5 = X_5(k) = \max \{ X_0, kX_1, kY_0 \}$. Then the inequality
\begin{displaymath}
R_n^{(k)} \geq p_{\lceil kn/(k-1) \rceil}
\end{displaymath}
holds for every
\begin{displaymath}
n \geq \frac{k-1}{k}(\pi(X_5)+2).
\end{displaymath}
\end{lem}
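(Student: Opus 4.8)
The plan is to obtain this lemma as the special case $t = -1$ of Theorem \ref{t301}. First I would check that this choice of $t$ is admissible. Since $k/(k-1) > 1$ for every $k > 1$, we have $\lceil k/(k-1)\rceil \geq 2$, and hence $t = -1 > -\lceil k/(k-1)\rceil$, so Theorem \ref{t301} does apply with $t = -1$. (For $n$ at least as large as the stated threshold the index $\lceil kn/(k-1)\rceil - 1$ is a positive integer, so every prime index occurring below is legitimate.)

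Next I would trace the parameters through Theorem \ref{t301}. With $t = -1$ the quantity $r = (t+1)(k-1)/k$ collapses to $r = 0$, so that $Y_r = Y_0$ and consequently $X_2 = \max\{X_0, kX_1, kY_0\} = X_5$. Likewise the threshold of Theorem \ref{t301} becomes $n_0 = \frac{k-1}{k}(\pi(X_2) - t + 1) = \frac{k-1}{k}(\pi(X_5) + 2)$, which is exactly the bound appearing in the statement of the present lemma. Thus Theorem \ref{t301} already delivers, for every $n \geq \frac{k-1}{k}(\pi(X_5)+2)$, the strict inequality $R_n^{(k)} > p_{\lceil kn/(k-1)\rceil - 1}$.

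The only step requiring a moment's thought is converting this strict inequality between consecutive prime indices into the weak inequality $R_n^{(k)} \geq p_{\lceil kn/(k-1)\rceil}$ that is claimed. Here I would invoke the fact, recorded in the Definition, that $R_n^{(k)}$ is itself a prime number: a prime strictly exceeding $p_{\lceil kn/(k-1)\rceil - 1}$ must be at least the next prime, namely $p_{\lceil kn/(k-1)\rceil}$, which is precisely the assertion. I expect no genuine obstacle in this argument; the entire content is the observation that the $t = -1$ instance of Theorem \ref{t301} encodes the desired estimate once the primality of $R_n^{(k)}$ is used to upgrade the strict bound to the weak one.
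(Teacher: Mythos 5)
Your proposal is correct and follows exactly the paper's route: the paper's own proof is the one-line remark that one sets $t=-1$ in Theorem \ref{t301}, and your parameter tracing ($r=0$, $Y_r=Y_0$, $X_2=X_5$, threshold $\frac{k-1}{k}(\pi(X_5)+2)$) together with the upgrade from $R_n^{(k)} > p_{\lceil kn/(k-1)\rceil - 1}$ to $R_n^{(k)} \geq p_{\lceil kn/(k-1)\rceil}$ via primality of $R_n^{(k)}$ simply makes explicit what the paper leaves implicit.
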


\begin{proof}
We just set $t=-1$ in Theorem \ref{t301}.
\end{proof}

\noindent
The inequality in Theorem \ref{t313} becomes an equality in the following case.

\begin{thm} \label{t318}
If $k \geq 143.7$, then
\begin{displaymath}
N_0(k) = \pi(2k).
\end{displaymath}
\end{thm}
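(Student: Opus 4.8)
The plan is to use the lower bound $N_0(k) \geq \pi(2k)$ already supplied by Theorem \ref{t313} (applicable since $143.7 > 11/3$) and to prove the matching upper bound $N_0(k) \leq \pi(2k)$; the two together give the asserted equality. Thus everything reduces to establishing that $R_n^{(k)} \geq p_{\lceil kn/(k-1) \rceil}$ for every $n \geq \pi(2k)$ whenever $k \geq 143.7$. I would carry this out in two stages that parallel the architecture of the proof of Theorem \ref{t307}: an analytic bound via Lemma \ref{l317} covering all large $n$, followed by a short elementary argument disposing of the remaining small values.

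For the analytic stage I would apply Lemma \ref{l317}, i.e.\ Theorem \ref{t301} with $t = -1$ (so $r = 0$ and $s = 0$, legitimate since $\lceil k/(k-1)\rceil = 2$ forces only $t > -2$), which yields $R_n^{(k)} \geq p_{\lceil kn/(k-1)\rceil}$ for $n \geq \frac{k-1}{k}(\pi(X_5)+2)$ with $X_5 = \max\{X_0, kX_1, kY_0\}$. I would choose explicit coefficients for $A(x)$ and $B(x)$ in the style of Theorem \ref{t307}, now taking advantage of $s=0$ (which permits a milder $A$), and verify as in the proof of Lemma \ref{l304} that \eqref{307} holds with $Y_0 \leq 2$, that \eqref{308} holds with a small constant $X_0 \leq 2k$ by Corollary 3.9 of \cite{ca2}, and that the threshold $\widetilde r_0(k)$ for \eqref{309} (the analogue of $\widetilde r(k)$) satisfies $\widetilde r_0(k) \leq 2$. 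Since $\widetilde r_0$ is decreasing in $k$, it suffices to check this at the single point $k = 143.7$, and this is exactly where the stated constant originates. Taking $X_1 = \widetilde r_0(k) \leq 2$, $Y_0 \leq 2$ and $X_0 \leq 2k$ gives $X_5 \leq 2k$, whence $\pi(X_5) \leq \pi(2k)$ and $\lceil \frac{k-1}{k}(\pi(X_5)+2)\rceil \leq \lceil \frac{k-1}{k}(\pi(2k)+2)\rceil = \pi(2k)+2$, the last equality holding because $\pi(2k)+2 < k$ places the correction $\frac{\pi(2k)+2}{k}$ in $(0,1)$. This settles all $n \geq \pi(2k)+2$.

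It then remains to treat the two exceptional values $n = \pi(2k)$ and $n = \pi(2k)+1$, using the exact evaluations already available. Proposition \ref{p315} gives $R_{\pi(2k)}^{(k)} = p_{\pi(2k)+1}$ and Corollary \ref{k316}(ii) gives $R_{\pi(2k)+1}^{(k)} = p_{\pi(2k)+2}$, the latter requiring $\pi(2k)+1 \leq \pi(3k)-2$, i.e.\ at least three primes in $(2k,3k]$, which holds for $k \geq 143.7$. Writing $\lceil kn/(k-1)\rceil = n + \lceil n/(k-1)\rceil$, the two required inequalities $p_{\pi(2k)+1} \geq p_{\lceil k\pi(2k)/(k-1)\rceil}$ and $p_{\pi(2k)+2} \geq p_{\lceil k(\pi(2k)+1)/(k-1)\rceil}$ both reduce to $\lceil n/(k-1)\rceil \leq 1$, hence to $\pi(2k)+1 \leq k-1$; this follows for $k \geq 143.7$ from the elementary bound $\pi(2k) < 8k/(5\log(2k))$ of \cite{tr}. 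Combining the two stages establishes the inequality for every $n \geq \pi(2k)$, so $N_0(k) \leq \pi(2k)$, and together with Theorem \ref{t313} this yields $N_0(k) = \pi(2k)$.

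The hard part will be the analytic stage, specifically selecting coefficients $a_j, b_j$ for which all three thresholds cooperate and confirming $\widetilde r_0(k) \leq 2$ for exactly $k \geq 143.7$: this is a transcendental inequality in $k$ whose solution pins down the stated constant. By contrast, the gap conditions $\pi(2k)+1 \leq k-1$ and $\pi(3k)-\pi(2k)\geq 3$ hold well below $143.7$, and the remaining manipulations with the ceiling function and the already-established exact values of the relevant Ramanujan primes are routine.
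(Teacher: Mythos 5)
Your proposal is correct and follows essentially the same route as the paper's proof: the paper likewise applies Lemma \ref{l317} (Theorem \ref{t301} with $t=-1$) with explicit coefficients $A(x)=-3.3/\log x$, $s=0$, $Y_0=2$, $B(x)=1.17/\log x$, $X_0=5.43$, shows that the resulting threshold $z(k)$ for \eqref{309} satisfies $z(k)\leq 2$ for $k\geq 143.7$ so that $X_6=2k$ and hence $N_0(k)\leq \pi(2k)+2$, and then disposes of $n=\pi(2k)$ and $n=\pi(2k)+1$ via Proposition \ref{p315} combined with Theorem \ref{t313}. Your only deviation is cosmetic: for $n=\pi(2k)+1$ you cite Corollary \ref{k316}(ii) where the paper relies on Proposition \ref{p315} plus monotonicity, which amounts to the same thing.
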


\begin{proof}
We set $A(x) = - 3.3/\log x$, $s=0$ and $Y_0 = 2$. Similarly to the proof of Corollary 3.6 from \cite{ca2}, we get that \eqref{309} is fulfilled for every
$x \geq Y_0$. By setting $B(x) = 1.17/\log x$ and $X_0=5.43$ and using Corollary 3.5 from \cite{ca2}, we see that the inequality \eqref{310} is fulfilled
for every $x \geq X_0$. Let
\begin{displaymath}
z(k) = \exp \left( \sqrt{3.3 + \frac{1}{4}\left( \log k - \frac{4.47}{\log k} \right)^2} - \frac{1}{2} \left( \log k - \frac{4.47}{\log k} \right) \right).
\end{displaymath}
It is easy to show that $x \geq z(k)$ is equivalent to the inequality \eqref{311}. By setting $X_6 = X_6(k) = \max\{ 2k, 5.43, kz(k) \}$ and using Lemma
\ref{l317}, we get that
\begin{displaymath}
N_0(k) \leq \left \lceil \frac{k-1}{k} ( \pi(X_6) + 2) \right \rceil.
\end{displaymath}
In the proof of Theorem \ref{t307}, we showed that $\widetilde r(k) \leq 3$ for every $k \geq 745.8$. Analogously, we get $z(k) \leq 2$ for every $k \geq
143.7$. Hence we obtain $X_6 = 2k$ and therefore $N_0(k) \leq \pi(2k)+2$ for every $k \geq 143.7$. By using a suitable upper bound for $\pi(x)$, it is easy
to see, that $\pi(2k) \leq k-1$ and $\pi(2k)+1 \leq k-1$ for every $k \geq 143.7$. Hence, by Proposition \ref{p315},
\begin{displaymath}
R_{\pi(2k)}^{(k)} = p_{\pi(2k)+1} = p_{\lceil k\pi(2k)/(k-1) \rceil}
\end{displaymath}
and
\begin{displaymath}
R_{\pi(2k)+1}^{(k)} \geq p_{\pi(2k)+2} = p_{\lceil k(\pi(2k)+1)/(k-1) \rceil}
\end{displaymath}
for every $k \geq 143.7$. So, we obtain $N_0(k) \leq \pi(2k)$ for every $k \geq 143.7$. Theorem \ref{t313} finishes the proof.
\end{proof}

\subsection{An upper bound for the $n$th $k$-Ramanujan prime}

After finding a lower bound for the $n$th $k$-Ramanujan prime, we find an upper bound by using the following two propositions, where $\Upsilon_{k}(x)$ is
defined by
\begin{equation} \label{upsilon}
\Upsilon_{k}(x) = \Upsilon_{k, a_1, \ldots, a_{m_1}, b_1, \ldots, b_{m_2}}(x) = \frac{x}{\log x - 1 - A(x)} \left( 1 - \frac{1}{k} - \frac{1}{k}\,
\frac{\log k - A(x) + B(x/k)}{\log (x/k) - 1 - B(x/k)} \right).
\end{equation}

\begin{prop} \label{p319}
If $x \geq \max\{ Y_0, k X_0\}$, then
\begin{displaymath}
\pi(x) - \pi \left( \frac{x}{k} \right) > \Upsilon_{k}(x).
\end{displaymath}
\end{prop}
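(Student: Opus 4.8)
The plan is to bound $\pi(x)$ from below and $\pi(x/k)$ from above using the two approximations built into the hypotheses, and then to recognise the resulting difference as $\Upsilon_k(x)$ after a short algebraic rearrangement. Concretely, for $x \geq Y_0$ the inequality \eqref{307} with $s=0$ gives $\pi(x) > x/(\log x - 1 - A(x))$, while applying \eqref{308} at the point $x/k$ — which is legitimate as soon as $x/k \geq X_0$, that is $x \geq kX_0$ — yields $\pi(x/k) < (x/k)/(\log(x/k) - 1 - B(x/k))$. Subtracting, for every $x \geq \max\{Y_0, kX_0\}$ we obtain the strict inequality
\[
\pi(x) - \pi\left(\frac{x}{k}\right) > \frac{x}{\log x - 1 - A(x)} - \frac{x/k}{\log(x/k) - 1 - B(x/k)}.
\]

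The remaining step is purely algebraic: I would show that the right-hand side above is exactly $\Upsilon_k(x)$. Writing $L = \log x - 1 - A(x)$ and $M = \log(x/k) - 1 - B(x/k)$ and using $\log(x/k) = \log x - \log k$, one computes $L - M = \log k - A(x) + B(x/k)$, so that $L/(kM) = (1/k)\bigl(1 + (\log k - A(x) + B(x/k))/M\bigr)$. Factoring $x/L$ out of the difference then gives
\[
\frac{x}{L} - \frac{x/k}{M} = \frac{x}{L}\left(1 - \frac{L}{kM}\right) = \frac{x}{L}\left(1 - \frac{1}{k} - \frac{1}{k}\,\frac{\log k - A(x) + B(x/k)}{M}\right),
\]
which is precisely the definition of $\Upsilon_k(x)$, and combining this identity with the displayed inequality completes the argument.

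There is essentially no hard step here; the only points requiring care are bookkeeping. The lower bound for $\pi(x)$ needs $x \geq Y_0$ and the upper bound for $\pi(x/k)$ needs $x \geq kX_0$, and the intersection of these two ranges is exactly the stated hypothesis $x \geq \max\{Y_0, kX_0\}$. The positivity of the denominators $L$ and $M$ over this range is already implicit in \eqref{307} and \eqref{308}, so no separate sign analysis is needed, and the strictness of the conclusion is inherited directly from the strict inequalities \eqref{307} and \eqref{308}.
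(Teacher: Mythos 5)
Your proof is correct and follows exactly the paper's argument: apply \eqref{307} with $s=0$ at $x$ and \eqref{308} at $x/k$, subtract, and verify algebraically that the resulting difference equals $\Upsilon_k(x)$. The only difference is that you spell out the algebraic identity (which the paper leaves as an observation), and your bookkeeping of the ranges $x \geq Y_0$ and $x \geq kX_0$ matches the stated hypothesis.
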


\begin{proof}
We have
\begin{displaymath}
\pi(x) - \pi \left( \frac{x}{k} \right) > \frac{x}{\log x - 1 - A(x)} - \frac{x/k}{\log(x/k) - 1 - B(x/k)}
\end{displaymath}
for every $x \geq \max\{ Y_0, k X_0\}$ and see that the term on the right hand side is equal to $\Upsilon_{k}(x)$.
\end{proof}

\begin{prop} \label{p320}
For every sufficiently large $x$, the derivative $\Upsilon'_k(x) > 0$.
\end{prop}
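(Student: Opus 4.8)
The plan is to pass to the additive form of $\Upsilon_k$ already isolated in the proof of Proposition~\ref{p319}. Setting
\begin{equation*}
u(x) = \frac{x}{\log x - 1 - A(x)}, \qquad v(y) = \frac{y}{\log y - 1 - B(y)},
\end{equation*}
the algebraic identity used there to identify the right-hand side shows that $\Upsilon_k(x) = u(x) - v(x/k)$. Differentiating and applying the chain rule to the second term gives
\begin{equation*}
\Upsilon_k'(x) = u'(x) - \frac{1}{k}\,v'(x/k),
\end{equation*}
so it suffices to understand the two derivatives $u'$ and $v'$ separately and compare their sizes.

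Next I would compute $u'$. Writing $D(x) = \log x - 1 - A(x)$, one has $xD'(x) = 1 - xA'(x)$, and since $A$ is a finite sum of terms $a_j/\log^j x$, the quantity $xA'(x) = -\sum_{j=1}^{m_1} j a_j/\log^{j+1}x$ is $O(1/\log^{2} x)$. Hence
\begin{equation*}
u'(x) = \frac{D(x) - xD'(x)}{D(x)^2} = \frac{\log x - 2 - A(x) - \sum_{j} j a_j/\log^{j+1}x}{(\log x - 1 - A(x))^{2}},
\end{equation*}
whose numerator is $\log x\,(1+o(1))$ and whose denominator is $\log^{2} x\,(1+o(1))$; thus $\log x\cdot u'(x) \to 1$. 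The identical computation with $B$ in place of $A$ gives $\log y\cdot v'(y)\to 1$, and since $\log(x/k)\sim\log x$ we also obtain $\log x\cdot v'(x/k)\to 1$.

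Combining these, $\log x\cdot\Upsilon_k'(x) = \log x\,u'(x) - \tfrac{1}{k}\log x\,v'(x/k) \to 1 - 1/k$. Because $k>1$, the limit $1-1/k$ is strictly positive, so $\Upsilon_k'(x) > 0$ for all sufficiently large $x$, as asserted. The only genuine work is the bookkeeping in the previous paragraph: one must verify that $A$, $B$ and their derivatives, together with the constant $-2$, contribute only to lower order, so that both $u'$ and $v'$ are indeed $(1+o(1))/\log x$. As $A$ and $B$ are finite sums of negative powers of $\log$, this is routine and is not the real obstacle; the essential point is simply that the leading $x/\log x$ behaviour of $u(x)$ beats the leading $(x/k)/\log(x/k)$ behaviour of $v(x/k)$ by exactly the positive factor $1-1/k$.
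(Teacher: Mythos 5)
Your proof is correct, but it takes a genuinely different route from the paper's. The paper keeps the product form $\Upsilon_k = FG$, with $F(x) = x/(\log x - 1 - A(x))$ and $G(x) = 1 - \tfrac{1}{k} - \tfrac{1}{k}\,\frac{\log k - A(x) + B(x/k)}{\log(x/k) - 1 - B(x/k)}$, and shows that each of $F$, $F'$, $G$, $G'$ is eventually positive, so that $\Upsilon_k' = F'G + FG' > 0$; the positivity of $G'$ there rests on the inequality $b_1 > a_1$ recorded in the Remark following \eqref{309}. You instead pass to the difference form $\Upsilon_k(x) = u(x) - v(x/k)$ (the identity implicit in the proof of Proposition \ref{p319}, which is easily verified by writing $\log x - 1 - A(x) = (\log(x/k) - 1 - B(x/k)) + \log k - A(x) + B(x/k)$), differentiate termwise, and show $\log x \cdot \Upsilon_k'(x) \to 1 - 1/k > 0$. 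Your formula for $u'$ agrees with the paper's formula for $F'$, and the asymptotics are indeed routine because $A$ and $B$ are fixed finite sums of negative powers of $\log$. What your argument buys: it is more direct, it identifies the precise first-order behaviour $\Upsilon_k'(x) \sim (1-1/k)/\log x$, and it nowhere uses $b_1 > a_1$. What the paper's argument buys: each of its four sign conditions converts into an explicit threshold, which is exactly how Lemma \ref{l322} later produces the explicit constant $X_{13}$ (and Section 4 the constant $X_{17}$); your purely asymptotic limit argument establishes the existence of $X_{10}$ but gives no explicit value. Since the proposition asserts only that $\Upsilon_k'(x) > 0$ for every sufficiently large $x$, your proof fully suffices for the statement as given.
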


\begin{proof}
We set $F(x) = F_{a_1, \ldots, a_{m_1}}(x) = x/(\log x - 1 - A(x))$ and
\begin{displaymath}
G(x) = G_{k,a_1, \ldots, a_{m_1}, b_1, \ldots, b_{m_2}}(x) = 1 - \frac{1}{k} - \frac{1}{k}\, \frac{\log k - A(x) + B(x/k)}{\log (x/k) - 1 - B(x/k)}.
\end{displaymath}
It is clear that $G(x) > 0$ for every sufficiently large $x$. Since $k > 1$, we obtain $B(x/k) > B(x) > A(x)$ as well as $\log(x/k) \leq \log x$ and
$\log(x/k) - 1 - B(x/k) > 0$ for every $x \geq \max \{Y_0, kX_0\}$. It follows that
\begin{equation} \label{330}
G'(x) > \frac{1}{k(\log(x/k) - 1 - B(x/k))}\, \left( \sum_{j=1}^{m_2} \frac{j \cdot b_j}{\log^{j+1} x} - \sum_{i=1}^{m_1} \frac{i \cdot a_i}{\log^{i+1}x}
\right)
\end{equation}
for every $x \geq \max \{Y_0, kX_0\}$. Since $b_1 > a_1$, we get
\begin{displaymath}
\sum_{j=1}^{m_2} \frac{j \cdot b_j}{\log^{j+1} x} - \sum_{i=1}^{m_1} \frac{i \cdot a_i}{\log^{i+1}x} \geq 0
\end{displaymath}
and therefore $G'(x) > 0$ for every sufficiently large $x$. We have $F(x) > 0$ for every $x \geq Y_0$. Further,
\begin{displaymath}
\log x - 2 - A(x) - \sum_{i=1}^{m_1} \frac{i \cdot a_i}{\log^{i+1}x} > 0
\end{displaymath}
and so
\begin{displaymath}
F'(x) = \frac{1}{(\log x - 1 - A(x))^2} \left( \log x - 2 - A(x) - \sum_{i=1}^{m_1} \frac{i \cdot a_i}{\log^{i+1}x} \right) > 0
\end{displaymath}
for every sufficiently large $x$. So, for every sufficiently large $x$, we get $\Upsilon'_k(x) = (F(x)G(x))' > 0$.
\end{proof}

\noindent
Now, let $m_1 = m_2 = 1$. By Proposition \ref{p320}, there exists an $X_7 = X_7(k, a_1, b_1)$ such that $\Upsilon'_k(x) > 0$ for every $x
\geq X_7$. Let $X_8 = X_8(b_1) \in \N$ be such that
\begin{equation} \label{X8}
p_n \geq n(\log p_n - 1 - b_1/\log p_n)
\end{equation}
for every $n \geq X_8$.

\begin{rema}
Clearly, $X_8 \leq \pi(X_0)+1$.
\end{rema}

\noindent
Let $\e_1 \geq 0$ and $\e_2 \geq 0$ be such that $\e_1 + \e_2 > 0$. We define
\begin{displaymath}
\e =
\begin{cases}
\e_1 &\text {\text{if} $\e_1 \neq 0$,}\\
\e_2 &\text {\text{otherwise}} \nonumber
\end{cases}
\end{displaymath}
and
\begin{displaymath}
\lambda = \frac{\e}{2} + \e_2 \cdot \text{sign}(\e_1) \left( 1 + \frac{\e}{2} \right).
\end{displaymath}
Let $S = S(k,a_1,b_1,X_0,\e_1,\e_2)$ be defined by
\begin{displaymath}
S = \exp \left( \sqrt{b_1 + \frac{2(1+\e)}{(k-1)\e} \left( b_1-a_1 + \frac{a_1\log k}{\log kX_0} \right) + \left( \frac{1}{2} + \frac{(1+\e)\log k}{(k-1)\e}
\right)^2} + \frac{1}{2} + \frac{(1+\e)\log k}{(k-1)\e} \right),
\end{displaymath}
and let $T = T(a_1,b_1,\e_1,\e_2)$ be defined by
\begin{displaymath}
T = \exp \left( \sqrt{b_1 + \frac{b_1-a_1}{\lambda} + \frac{a_1\log(1+\e_1)}{\lambda} + \left( \frac{1}{2} + \frac{\log(1+\e_1)}{2\lambda} \right)^2} +
\frac{1}{2} + \frac{\log(1+\e_1)}{2\lambda} \right).
\end{displaymath}
By defining $X_9 = X_9(k,a_1,b_1,Y_0,X_0,\e_1,\e_2,X_7)$ by
\begin{equation} \label{X9}
X_9 = \max \left \{ \frac{Y_0}{1+\e_1}, \frac{kX_0}{1+\e_1}, \frac{k\,S(k,a_1,b_1,\e_1,\e_2)}{1+\e_1}, T(a_1, b_1,\e_1,\e_2), \frac{X_7}{1+\e_1} \right \},
\end{equation}
we get, in view of \eqref{103}, the following result.

\begin{thm} \label{t321}
The inequality
\begin{displaymath}
R_n^{(k)} \leq (1+\e_1) p_{ \lceil (1+\e_2)kn/(k-1) \rceil}
\end{displaymath}
holds for every
\begin{equation} \label{333}
n \geq n_1 = \frac{k-1}{k(1+\e_2)} \max \{\pi(X_9)+1, X_8 \}.
\end{equation}
\end{thm}

\begin{proof}
For convenience, we write $t = t(n, k, \e_2) = \lceil (1+\e_2)nk/(k-1) \rceil$. Using Proposition \ref{p319}, we obtain
\begin{displaymath}
\pi(x) - \pi \left( \frac{x}{k} \right) > \Upsilon((1+\e_1) p_t)
\end{displaymath}
for every $x \geq (1+\e_1)p_t$. So to prove the claim, it is enough to show that
\begin{equation} \label{334}
\Upsilon((1+\e_1) p_t) \geq n.
\end{equation}
For this, we first show, using the definition \eqref{upsilon} of $\Upsilon((1+\e_1)p_t)$, that
\begin{equation} \label{335}
1 - \frac{1}{k} - \frac{1}{k}\, \frac{\log k - a_1/\log((1+\e_1)p_t) + b_1/\log((1+\e_1)p_t/k)}{\log((1+\e_1)p_t/k) - 1 - b_1/\log((1+\e_1)p_t/k)} >
\frac{k-1}{k} \left( 1 - \frac{\e}{2(1+\e)} \right).
\end{equation}
We have $(1+\e_1)p_t/k \geq S(k,a_1,b_1,X_0,\e_1,\e_2)$ and therefore
\begin{displaymath}
\frac{\e(k-1)}{2k(1+\e)} \left( \log((1+\e_1)p_t/k) - 1 - \frac{b_1}{\log((1+\e_1)p_t/k)} \right ) > \frac{\log k}{k} + \frac{b_1}{k \log((1+\e_1)p_t/k)}
- \frac{a_1}{k \log (1+\e_1)p_t}.
\end{displaymath}
From this inequality, we obtain \eqref{335}. So for the proof of \eqref{334}, using the definition \eqref{upsilon} of $\Upsilon((1+\e_1)p_t)$, it suffices
to show that the inequality
\begin{equation} \label{336}
\frac{k-1}{k} \left( 1 - \frac{\e}{2(1+\e)} \right) \cdot \frac{(1 + \e_1) p_t}{\log (1+ \e_1) p_t - 1 - a_1/\log((1+\e_1)p_t)} \geq n
\end{equation}
is fulfilled. Since $p_t \geq T(a_1,b_1,\e_1,\e_2)$, we get
\begin{equation} \label{337}
\frac{k-1}{k}\left( 1 - \frac{\e}{2(1+\e)} \right) \cdot \frac{(1+\e_1)\, t\, (\log p_t - 1 - b_1/\log(p_t))}{\log((1+\e_1)p_t)  - 1 -
a_1/\log((1+\e_1)p_t)} \geq n.
\end{equation}
Since $t \geq X_8$, we have $p_t > t(\log p_t - 1 - b_1/\log(p_t))$. Using \eqref{337}, we get \eqref{336} and therefore \eqref{334}.
\end{proof}

\noindent
Now, let $m_1 = m_2 = 1,a_1 = 1$ and $b_1 = 1.17$. In the next lemma, we determine an explicit $X_{10} = X_{10}(k)$ such that $\Upsilon'_k(x) > 0$ for every
$x \geq X_{10}$.

\begin{lem} \label{l322}
Let $X_{10} = X_{10}(k) = \max \{ k X_{11}, e^{2.547}, 5.43k \}$, where
\begin{displaymath}
X_{11} = X_{11}(k) = \exp \left( \sqrt{1.17 + \frac{1.17}{k-1} + \left( \frac{1}{2} + \frac{\log k}{2(k-1)}\right)^2} + \frac{1}{2} + \frac{\log k}{2(k-1)}
\right).
\end{displaymath}
Then, $\Upsilon'_k(x) > 0$ for every $x \geq X_{10}$.
\end{lem}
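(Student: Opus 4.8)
The plan is to reduce the claim to the positivity, on $x \geq X_{13}$, of the four elementary functions $F$, $F'$, $G$, $G'$ introduced in the proof of Proposition \ref{p320}, now specialized to $m_1 = m_2 = 1$, $a_1 = 1$, $b_1 = 1.17$, so that $A(x) = 1/\log x$ and $B(x) = 1.17/\log x$. Since $\Upsilon_k'(x) = F'(x)G(x) + F(x)G'(x)$, it suffices to verify
\begin{displaymath}
F(x) > 0, \quad F'(x) > 0, \quad G(x) > 0, \quad G'(x) > 0
\end{displaymath}
for every $x \geq X_{13}$; each summand of $\Upsilon_k'(x)$ is then positive and the conclusion follows. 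Three of these four conditions I would dispatch by recording the explicit thresholds that the present parameter choice forces in the analysis of Proposition \ref{p320}.

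For $F'(x) > 0$ the relevant inequality is $\log x - 2 - A(x) - a_1/\log^2 x > 0$, i.e.\ $\log x - 2 - 1/\log x - 1/\log^2 x > 0$; writing $u = \log x$, this is $u - 2 - 1/u - 1/u^2 > 0$, whose least root is $u \approx 2.547$, so that $F'(x) > 0$ for $x \geq e^{2.547}$, and the same bound secures $F(x) > 0$ as well, since there $\log x - 1 - 1/\log x > 0$. For $G'(x) > 0$ I would invoke the estimate \eqref{321}, which is valid once the prefactor $1/\bigl(k(\log(x/k) - 1 - B(x/k))\bigr)$ is positive, i.e.\ once $\log(x/k) - 1 - 1.17/\log(x/k) > 0$; by Corollary 3.9 of \cite{ca2} (the choice $X_0 = 5.43$) this holds as soon as $x/k \geq 5.43$, that is $x \geq 5.43k$. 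The bracketed term in \eqref{321} here equals $(b_1 - a_1)/\log^2 x = 0.17/\log^2 x > 0$ automatically, since $b_1 > a_1$, so no further threshold is needed for $G'$.

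The substantive step is the positivity of $G$, which is precisely where $X_{14}$ is manufactured. Writing $v = \log(x/k)$ and using $A(x) > 0$, a sufficient condition for $G(x) > 0$ is obtained by discarding the favourable term $-A(x)$ from the numerator, namely $(k-1)\bigl(v - 1 - 1.17/v\bigr) \geq \log k + 1.17/v$. Clearing the denominator $v > 0$ and dividing by $k - 1$ turns this into the quadratic inequality
\begin{displaymath}
v^2 - \left( 1 + \frac{\log k}{k-1} \right) v - \frac{1.17\,k}{k-1} \geq 0,
\end{displaymath}
whose larger root, after completing the square and using $1.17\,k/(k-1) = 1.17 + 1.17/(k-1)$, is exactly $v = \log X_{14}$. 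Thus the condition reads $v \geq \log X_{14}$, i.e.\ $x/k \geq X_{14}$, i.e.\ $x \geq kX_{14}$, and $G(x) > 0$ there. The hard part is entirely bookkeeping rather than conceptual: I must confirm that the completed-square form of the larger root reproduces the stated closed form for $X_{14}$, and, along the way, that the denominator $\log(x/k) - 1 - 1.17/\log(x/k)$ remains positive on the relevant range (already guaranteed by $x \geq 5.43k$) so that clearing it preserves the inequality.

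Finally, taking $X_{13} = \max\{kX_{14}, e^{2.547}, 5.43k\}$ forces all four positivity conditions simultaneously: $F(x) > 0$ and $F'(x) > 0$ from $x \geq e^{2.547}$, $G'(x) > 0$ from $x \geq 5.43k$, and $G(x) > 0$ from $x \geq kX_{14}$. Hence $\Upsilon_k'(x) = F'(x)G(x) + F(x)G'(x) > 0$ for every $x \geq X_{13}$, which is the assertion of the lemma.
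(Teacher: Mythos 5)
Your proof is correct and follows essentially the same route as the paper: the decomposition $\Upsilon_k' = F'G + FG'$ with the thresholds $e^{2.547}$ for $F, F' > 0$, $5.43k$ for the denominator positivity and $G' > 0$ via \eqref{321}, and $kX_{14}$ for $G > 0$. The only difference is that you work out in full the quadratic-root computation showing $\log X_{14}$ is exactly the larger root of $v^2 - \bigl(1 + \tfrac{\log k}{k-1}\bigr)v - \tfrac{1.17k}{k-1} = 0$, a step the paper dismisses as ``easy to see,'' and your bookkeeping there checks out.
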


\begin{proof}
We set $F(x) = x/(\log x - 1 - 1/\log x)$ and
\begin{displaymath}
G(x) = \frac{k-1}{k} - \frac{1}{k} \, \frac{\log k - 1/\log x + 1.17/\log(x/k)}{\log(x/k) - 1 - 1.17/\log(x/k)}.
\end{displaymath}
We have $F'(x) \geq 0$ for every $x \geq e^{2.547}$ and $F(e^{2.547}) > 0$. Hence, $F(x) > 0$ for every $x \geq e^{2.547}$. For every $x \geq 5.43k$ we have
$\log(x/k)-1-1.17/\log(x/k) > 0$ and, using \eqref{330}, we obtain $G'(x) > 0$. It is easy to see that $x \geq kX_{11}$ implies $G(x) > 0$ and we get
$\Upsilon'_k(x) = (F(x)G(x))' > 0$ for every $x \geq X_{10}$.
\end{proof}

\noindent
For Ramanujan primes, we obtain the following result.

\begin{prop} \label{p323}
If $t > 48/19$, then for every $n\in \N$
\begin{displaymath}
R_{n} \leq p_{\lceil tn \rceil}.
\end{displaymath}
\end{prop}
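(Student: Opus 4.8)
The plan is to split the range of $n$ into two parts: an effective asymptotic bound handles all large $n$, while a finite computation disposes of the small cases, which is exactly where the constant $48/19$ originates.

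First I would invoke Theorem \ref{t321} with the parameters already fixed just before Lemma \ref{l322}, namely $k = 2$, $m_1 = m_2 = 1$, $a_1 = 1$, $b_1 = 1.17$, together with the choice $\e_1 = 0$ and $\e_2 = 5/19$. Since $\e_1 + \e_2 = 5/19 \neq 0$, the hypotheses of that theorem are met, and Lemma \ref{l322} supplies an explicit threshold $X_{13}$ playing the role of $X_{10}$, so that $X_{12}$ and hence $n_1$ become concrete numbers. With $k = 2$ and $\e_1 = 0$, Theorem \ref{t321} yields
\[
R_n \leq p_{\lceil (1+\e_2)\cdot 2n \rceil} = p_{\lceil (48/19)\, n \rceil}
\]
for every $n \geq n_1$. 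Because $t > 48/19$ forces $(48/19)n \leq tn$, monotonicity of the ceiling function gives $p_{\lceil (48/19)n \rceil} \leq p_{\lceil tn \rceil}$, whence $R_n \leq p_{\lceil tn \rceil}$ for all $n \geq n_1$. The point of fixing $\e_2 = 5/19$ independently of $t$ is that $n_1$ is then a single number valid uniformly for every admissible $t$.

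It remains to verify $R_n \leq p_{\lceil tn \rceil}$ for the finitely many $n$ with $1 \leq n < n_1$. Writing $I_n$ for the index with $R_n = p_{I_n}$, the inequality $R_n \leq p_{\lceil tn \rceil}$ is equivalent to $\lceil tn \rceil \geq I_n$, that is, to $t > (I_n - 1)/n$. Using the known table of Ramanujan primes one checks that $\max_{1 \leq n < n_1}(I_n-1)/n = 48/19$, attained precisely at $n = 19$, where $R_{19} = 227 = p_{49}$ and $(49-1)/19 = 48/19$. Since $t > 48/19 \geq (I_n-1)/n$ for each such $n$, the inequality holds throughout this range, finishing the proof.

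The main obstacle is the honest bookkeeping in the first step: one must carry the explicit constants of Theorem \ref{t321} (the quantities $S$, $T$, $X_{11}$, $X_{12}$) through the specialization to $k=2$, $a_1=1$, $b_1=1.17$, $\e_1=0$, $\e_2=5/19$ in order to pin down a concrete $n_1$, and then make sure the finite verification genuinely covers every $n$ below that $n_1$. It is also worth recording that the constant is sharp: at $t = 48/19$ one has $\lceil 19t \rceil = 48 < 49$, so $p_{\lceil 19t \rceil} = 223 < 227 = R_{19}$ and the conclusion fails, which is exactly why the strict inequality $t > 48/19$ is the correct hypothesis.
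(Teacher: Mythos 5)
Your proposal is correct and follows essentially the same route as the paper: specialize Theorem \ref{t321} with $k=2$, $a_1=1$, $b_1=1.17$, $\e_1=0$, $\e_2=5/19$ (using Lemma \ref{l322} for $X_{10}$) to get $R_n \leq p_{\lceil 48n/19 \rceil}$ for all $n$ beyond an explicit threshold (the paper computes $n_1 = 19536$), then verify the finitely many remaining cases computationally, with $n=19$ as the extremal case where $R_{19}=p_{49}$ forces the strict hypothesis $t > 48/19$. Your reformulation of the finite check as $\max_n (I_n-1)/n = 48/19$ and your sharpness remark match the paper's treatment of $n=19$ exactly.
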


\begin{proof}
By \cite{ca2}, we choose $Y_0 = 468049$ and $X_0 = 5.43$. Then we have $p_n > n(\log p_n - 1 - 1.17/\log p_n)$ for every $n \geq 4$. Since this inequality
is also true for every $1 \leq n \leq 3$, we choose $X_8=1$ in Theorem \ref{t321}. By Lemma \ref{l322}, we choose $X_7(k,1,1.17) = X_{10}$ in Theorem
\ref{t321}. It follows
\begin{displaymath}
R_n^{(k)} \leq (1+\e_1) p_{ \lceil (1+\e_2)kn/(k-1) \rceil}
\end{displaymath}
for every
\begin{displaymath}
n \geq \frac{k-1}{k(1+\e_2)}(\pi(X_{12})+1),
\end{displaymath}
where $X_{12} = X_{12}(k,\e_1,\e_2) = X_9(k,1,1.17,468049,5.43,\e_1,\e_2,X_{10})$. Let $s = 48/19$ and $t > s$. We set $k=2$, $\e_1=0$ and $\e_2 = 5/19$ in
Theorem \ref{t321}, and we get $R_n \leq p_{\lceil sn \rceil}$ for every $n \geq 19536$. By using a computer, we check that $R_n \leq p_{\lceil sn \rceil} $
for every $20 \leq n \leq 19535$ and for every $1 \leq n \leq 18$. For $n= 19$, we have $p_{\lceil 19s \rceil} < R_{19} = p_{49} \leq p_{\lceil 19t
\rceil}$.
\end{proof}

\begin{rema}
In 2014, Srinivasan \cite{sri} proved independently for the case $k=2$ that for every $\e > 0$ there exists an integer $N$ such that $R_n < p_{ \lfloor
2n(1+\e) \rfloor}$ for every $n > N$. Further, she showed that $R_n \leq p_{ \lfloor 2.6n \rfloor}$ holds for every $n\in \N$. Using Proposition \ref{p323},
we obtain an improvement of the last inequality, namely that $R_n \leq p_{\lceil 2.53n \rceil}$ holds for every $n \in \N$.
\end{rema}

\section{On the difference $R_n^{(k)} - p_{\lceil nk/(k-1) \rceil}$}

Another question that arises in view of \eqref{103}, is the size of
\begin{equation} \label{438}
R_n^{(k)} - p_{\lceil nk/(k-1) \rceil}.
\end{equation}
In Proposition \ref{p305}, we yield a lower bound for \eqref{438}, which improves the lower bound in \eqref{106}. The goal in this section is to improve the
upper bound in \eqref{106}. In order to do this, we set $A(x) = 0$ and $B(x) = b_1/\log x$. By \cite{pd}, we choose $Y_0 = 5393$. Let $\e_1$, $\e_2$,
$\delta_1$ and $\delta_2$ all be positive and let
\begin{displaymath}
\eta(k) = k \left( \sqrt{b_1\left(1+\frac{1}{\delta_1}\right) + \left( \frac{1}{2}+\frac{\log k}{2\delta_1}\right)^2} + \frac{1}{2}+\frac{\log k}{2\delta_1}
\right).
\end{displaymath}
In addition, we set
\begin{displaymath}
X_{13} = X_{13}(k,b_1,X_0,\delta_1,\delta_2) = \max \{ 7477, kX_0, \eta(k), ke^{b_1/\delta_2}\}.
\end{displaymath}
As in the proof of Lemma \ref{l322}, we get that $\Upsilon'_k(x) \geq 0$ for every $x \geq X_{14}$, where
\begin{equation} \label{X14}
X_{14} = X_{14}(k) = \max\{5393, kX_0, kX_{11} \},
\end{equation}
where $X_{11} = X_{11}(k)$ is given by Lemma \ref{l322}. Further, let
\begin{displaymath}
X_{15} = X_{15}(k,b_1,\e_2) = X_9(k,0,b_1,5393,X_0,0,\e_2,X_{14}),
\end{displaymath}
where $X_9(k,0,b_1,5393,X_0,0,\e_2,X_{14})$ is given by \eqref{X9}, as well as
\begin{equation} \label{X16}
X_{16} = X_{16}(k,b_1,X_0,\e_2,\delta_1,\delta_2) = \frac{k-1}{k} \max \left \{ \pi(X_5) + 2, \frac{X_8}{1+\e_2}, \pi(X_{13})+1,
\frac{\pi(X_{15})+1}{1+\e_2} \right \},
\end{equation}
where $X_5$ is given by Lemma \ref{l317} and $X_8$ is defined by \eqref{X8}.

\subsection{On the difference $kn \log R_n^{(k)}/(k-1) - R_n^{(k)}$}

We consider the difference
\begin{displaymath}
\frac{kn}{k-1} \log R_n^{(k)} - R_n^{(k)}.
\end{displaymath}
The results below for this difference will be useful to find an upper bound of the difference \eqref{438}.

\begin{prop} \label{p401}
If $n \geq X_{16}$, where $X_{16}$ is defined by \eqref{X16}, then
\begin{displaymath}
\frac{kn}{k-1} \log R_n^{(k)} - R_n^{(k)} > \left( 1 - \frac{(1+\e_2)(1+\delta_1)(\log k + \delta_2)}{k-1} \right) \frac{kn}{k-1}.
\end{displaymath}
\end{prop}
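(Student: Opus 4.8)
The plan is to strip the statement down to a size estimate for $R:=R_n^{(k)}$ and then control that size through the explicit function $\Upsilon_k$. First I would rewrite the claimed inequality in the equivalent form
\[
\frac{kn}{k-1}>\frac{R}{\log R-1+C},\qquad C=\frac{(1+\e_2)(1+\delta_1)(\log k+\delta_2)}{k-1},
\]
obtained by moving $R$ to the right and $(1-C)\,\tfrac{kn}{k-1}$ to the left of the asserted inequality and dividing by $\log R-1+C$. This division is legitimate because $\log R-1+C>0$ once $R$ is large, and the hypothesis $n\ge X_{19}$ secures this: the first entry of $X_{19}$ gives $n\ge \pi(X_{16})+1$, whence $R\ge p_n>X_{16}\ge\max\{7477,kX_0\}$ by Proposition \ref{p201}(ii); in particular $R>\max\{Y_0,kX_0\}$ with $Y_0=5393$.

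Next I would feed in the two bounds that tie $n$ to $R$. By Proposition \ref{p207} we have $n=\pi(R)-\pi(R/k)$, and since $R\ge\max\{Y_0,kX_0\}$, Proposition \ref{p319} yields $n>\Upsilon_k(R)$, so that $\tfrac{kn}{k-1}>\tfrac{k}{k-1}\Upsilon_k(R)$. Substituting the present choices $A\equiv 0$ and $B(x)=b_1/\log x$ and abbreviating $\ell=\log(R/k)$, one computes
\[
\frac{k}{k-1}\,\Upsilon_k(R)=\frac{R}{\log R-1}\left(1-\frac{1}{k-1}\cdot\frac{\log k+b_1/\ell}{\ell-1-b_1/\ell}\right).
\]
Hence it suffices to prove $\frac{R}{\log R-1}(1-P)\ge\frac{R}{\log R-1+C}$ with $P=\frac{1}{k-1}\cdot\frac{\log k+b_1/\ell}{\ell-1-b_1/\ell}$; cross-multiplying and cancelling $R$ reduces the whole proposition to the single inequality
\[
C\ge \frac{P(\log R-1)}{1-P}.
\]

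The final, and most delicate, step is to close this by matching the three factors of $C$ against the three thresholds built into $X_{16}$. The entry $ke^{b_1/\delta_2}$ forces $\ell>b_1/\delta_2$, hence $b_1/\ell<\delta_2$ and $\log k+b_1/\ell<\log k+\delta_2$, producing the factor $\log k+\delta_2$. The entry $\eta(k)$ is chosen so that $\ell$ exceeds the larger root of
\[
\ell^2-\left(1+\frac{\log k}{\delta_1}\right)\ell-\left(1+\frac{1}{\delta_1}\right)b_1\ge 0,
\]
which is exactly equivalent (using $\log R=\ell+\log k$ and clearing denominators) to $\frac{\log R-1}{\ell-1-b_1/\ell}\le 1+\delta_1$, producing the factor $1+\delta_1$. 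The remaining factor $1+\e_2$ has to absorb the correction $\tfrac{1}{1-P}$, equivalently the requirement $P\le \e_2/(1+\e_2)$; this is where the size of $R$ is pinned down, with the upper bound $R\le p_{\lceil(1+\e_2)kn/(k-1)\rceil}$ from Theorem \ref{t321} (guaranteed by the remaining two entries of $X_{19}$ via $n\ge\frac{k-1}{k(1+\e_2)}\max\{\pi(X_{18})+1,X_{11}\}$) complementing the lower bound from Proposition \ref{p319}. The main obstacle is precisely this bookkeeping: confirming that the quadratic root defining $\eta(k)$ delivers the stated $(1+\delta_1)$ bound and that $1+\e_2$ dominates $\tfrac{1}{1-P}$ uniformly for $n\ge X_{19}$ — that is, the constant-chasing linking the thresholds to the three slack parameters $\e_2,\delta_1,\delta_2$, rather than any conceptual difficulty.
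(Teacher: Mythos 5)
Your reduction is sound as far as it goes: writing $R=R_n^{(k)}$, $\ell=\log(R/k)$, $P=\frac{1}{k-1}\cdot\frac{\log k+b_1/\ell}{\ell-1-b_1/\ell}$ and $C=\frac{(1+\e_2)(1+\delta_1)(\log k+\delta_2)}{k-1}$, the claim is indeed equivalent to $\frac{kn}{k-1}>\frac{R}{\log R-1+C}$, Propositions \ref{p207} and \ref{p319} give $\frac{kn}{k-1}>\frac{R}{\log R-1}(1-P)$, and the thresholds $ke^{b_1/\delta_2}$ and $\eta(k)$ in $X_{16}$ do exactly what you say (this matches the paper's step \eqref{428}$\to$\eqref{429}). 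The gap is the endgame, and it is not ``bookkeeping''. After inserting those two bounds, your sufficient condition $C(1-P)\ge P(\log R-1)$ becomes $(1+\e_2)(1-P)\ge 1$, i.e. $P\le\e_2/(1+\e_2)$, which is a \emph{lower}-bound requirement on $R$. Your proposed mechanism for it is backwards: the upper bound $R\le p_{\lceil(1+\e_2)kn/(k-1)\rceil}$ of Theorem \ref{t321} can only make $\ell$ smaller and hence $P$ \emph{larger}; no upper bound on $R$ can force $P$ to be small. Nor do the entries of $X_{19}$ encode this requirement: e.g.\ for large $k$ the binding entry of $X_{16}$ can be $kX_0=5.43k$, and the hypothesis allows $R/k$ barely above $X_0$, where $\ell-1-b_1/\ell\approx 4\cdot 10^{-4}$ and so $P$ is of order $\frac{\log k+0.69}{4\cdot 10^{-4}(k-1)}$ --- far above $\e_2/(1+\e_2)$ for small $\e_2$, and possibly above $1$, in which case your division by $1-P$ is not even legitimate. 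So the one step you defer is precisely the step that can fail, and your route gives no way to supply it.

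The paper closes the argument with a different idea, which your proposal never touches. From the same estimate (its \eqref{430}, your $\Upsilon_k$-bound) it keeps $n$ on both sides, obtaining
\begin{displaymath}
\frac{kn}{k-1}\log R_n^{(k)}-R_n^{(k)}>\frac{kn}{k-1}-\frac{(1+\delta_1)(\log k+\delta_2)}{k-1}\cdot\frac{R_n^{(k)}}{\log R_n^{(k)}-1},
\end{displaymath}
and then bounds the troublesome factor $\frac{R}{\log R-1}$ not pointwise against $\log R-1+C$, but by $\pi(R)-1$ via \eqref{431} (this is the entire purpose of the constant $7477$ in $X_{16}$, which your proof never uses), and then by $(1+\e_2)\frac{kn}{k-1}$ via Theorem \ref{t321}. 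Thus the error term stays proportional to $n$, and the factor $1+\e_2$ enters multiplicatively on the error rather than as a smallness condition on $P$; the conclusion follows for all $n\ge X_{19}$ no matter how close $R/k$ is to $X_0$. If you wish to keep your formulation, it can be repaired by a case distinction that reassembles exactly these ingredients: if $\log R-1\ge C/\e_2$, then $P\le\frac{(1+\delta_1)(\log k+\delta_2)}{(k-1)(\log R-1)}\le\frac{\e_2}{1+\e_2}$ and your computation closes; if $\log R-1<C/\e_2$, then $\log R-1+C>(1+\e_2)(\log R-1)$, and \eqref{431} together with Theorem \ref{t321} give $\frac{R}{\log R-1+C}<\frac{\pi(R)-1}{1+\e_2}\le\frac{kn}{k-1}$.
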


\begin{proof}
Dusart \cite{pd} proved that
\begin{equation} \label{441}
\pi(x) > \frac{x}{\log x - 1}
\end{equation}
holds for every $x \geq 5393$. Using this estimate, we get
\begin{equation} \label{442}
\pi(x) - \pi \left( \frac{x}{k} \right) >  \frac{(k-1)x}{k(\log x - 1)} - \frac{x}{k(\log x - 1)} \cdot \frac{\log k + b_1/\log(x/k)}{\log x/k - 1 -
b_1/\log(x/k)}
\end{equation}
for every $x \geq \max \{ 5393, kX_0\}$. Since $\log x - 1 \leq (1+\delta_1)(\log x/k - 1 - b_1/\log(x/k))$ for every $x \geq \eta(k)$, we can use
\eqref{442} and the inequality $b_1/\log(x/k) \leq \delta_2$, which is fulfilled for every $x \geq ke^{b_1/\delta_2}$, to see that
\begin{equation} \label{443}
\pi(x) - \pi \left( \frac{x}{k} \right) > \frac{(k-1)x}{k(\log x - 1)} - \frac{(1+\delta_1)(\log k + \delta_2)}{k} \cdot \frac{x}{(\log x - 1)^2}
\end{equation}
for every $x \geq X_{13}$. Since $n \geq X_{16} \geq (k-1)\max \{ \pi(X_5) + 2, \pi(X_{13})+1 \}/k$, we have $R_n^{(k)} \geq p_{ \lceil kn/(k-1) \rceil}
\geq X_{13}$. So we choose $x = R_n^{(k)}$ in \eqref{443} and obtain
\begin{displaymath}
\pi(R_n^{(k)}) - \pi \left( \frac{R_n^{(k)}}{k} \right) > \frac{(k-1)R_n^{(k)}}{k(\log x - 1)} - \frac{(1+\delta_1)(\log k + \delta_2)}{k} \cdot
\frac{R_n^{(k)}}{(\log R_n^{(k)} - 1)^2}.
\end{displaymath}
Using Proposition \ref{p207}, we get the inequality
\begin{equation} \label{444}
\frac{kn}{k-1} \log R_n^{(k)} - R_n^{(k)} > \frac{kn}{k-1} - \frac{(1+\delta_1)(\log k + \delta_2)}{k-1} \cdot \frac{R_n^{(k)}}{\log R_n^{(k)} - 1}.
\end{equation}
From \eqref{315}, it follows that
\begin{equation} \label{445}
\pi(x) > \frac{x}{\log x - 1} + 1
\end{equation}
for every $x \geq 470077$. We check with a computer that \eqref{445} also holds for every $7477 \leq x \leq 470077$. Using \eqref{444} and \eqref{445}, we
get
\begin{equation} \label{446}
\frac{kn}{k-1} \log R_n^{(k)} - R_n^{(k)} > \frac{kn}{k-1} - \frac{(1+\delta_1)(\log k + \delta_2)}{k-1} (\pi(R_n^{(k)})-1).
\end{equation}
Since $n \geq (k-1)(\pi(X_{15})+1)/(k(1+\e_2))$, we have $R_n^{(k)} \leq p_{ \lceil (1+\e_2)kn/(k-1) \rceil}$; i.e. $\pi(R_n^{(k)}) - 1 \leq
(1+\e_2)nk/(k-1)$. Now use \eqref{446}.
\end{proof}

\begin{kor} \label{k402}
Let $\e_2$, $\delta_1$ and $\delta_2$ all be positive so that
\begin{displaymath}
(1+\e_2)(1+\delta_1)(\log k + \delta_2) < k-1.
\end{displaymath}
If $n \geq X_{16}$, where $X_{16}$ is defined by \eqref{X16}, then
\begin{displaymath}
\frac{kn}{k-1} \log R_n^{(k)} >  R_n^{(k)}.
\end{displaymath}
\end{kor}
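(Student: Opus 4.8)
The plan is to read off the statement directly from Proposition \ref{p401}, which already does all the substantive work. That proposition asserts that under the hypothesis $n \geq X_{19}$ one has
\begin{displaymath}
\frac{kn}{k-1} \log R_n^{(k)} - R_n^{(k)} > \left( 1 - \frac{(1+\e_2)(1+\delta_1)(\log k + \delta_2)}{k-1} \right) \frac{kn}{k-1}.
\end{displaymath}
So the entire task reduces to checking that the right-hand side is nonnegative, after which the claimed strict inequality $kn\log R_n^{(k)}/(k-1) > R_n^{(k)}$ follows at once.

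First I would observe that the additional hypothesis of the corollary, namely $(1+\e_2)(1+\delta_1)(\log k + \delta_2) < k-1$, is precisely the condition needed to make the bracketed factor positive: dividing through by $k-1 > 0$ gives
\begin{displaymath}
\frac{(1+\e_2)(1+\delta_1)(\log k + \delta_2)}{k-1} < 1,
\end{displaymath}
hence $1 - (1+\e_2)(1+\delta_1)(\log k + \delta_2)/(k-1) > 0$. Next I would note that the second factor $kn/(k-1)$ is strictly positive, since $k > 1$ and $n \in \N$. Multiplying the two positive quantities shows the lower bound in Proposition \ref{p401} is strictly positive, and therefore $kn\log R_n^{(k)}/(k-1) - R_n^{(k)} > 0$, which rearranges to the desired inequality.

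There is essentially no obstacle here; the corollary is an immediate specialization of Proposition \ref{p401}. The only point worth stating explicitly is that the sign condition isolated in the hypothesis is exactly what turns the (possibly vacuous or negative) bound of the proposition into a meaningful positive one, so the proof amounts to verifying positivity of the leading coefficient and invoking $k>1$.
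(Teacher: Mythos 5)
Your proposal is correct and matches the paper's own proof, which simply states that the corollary follows directly from Proposition \ref{p401}; you have merely made explicit the routine verification that the hypothesis $(1+\e_2)(1+\delta_1)(\log k + \delta_2) < k-1$ makes the bracketed factor positive and that $kn/(k-1) > 0$.
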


\begin{proof}
Follows directly from Proposition \ref{p401}.
\end{proof}

\begin{rema}
Nicholson \cite{nic}, \cite{nic2} proved that $R_n \geq 2n \log R_n$ is fulfilled for every $n \in M = \{ 1, 2, 3, 4, 5, 6, 7, 10,$ $13, 14, 15, 16, 17,
19, 20, 21, 29, 31, 33, 34, 43, 44, 46, 68, 97, 98, 145, 166, 167, 168, 201 \}$ and for every $n \in \N\setminus M$ we have $R_n < 2n \log R_n$. Corollary
\ref{k402} generalizes the last inequality to $k$-Ramanujan primes.
\end{rema}

\begin{rema}
Amersi, Beckwith, Miller, Ronan and Sondow \cite{abmrs} showed that there exists a positive constant $c = c(k)$ such that
\begin{equation} \label{447}
\left| \frac{kn}{k-1} \log R_n^{(k)} - R_n^{(k)} \right| \leq \frac{c R_n^{(k)}}{\log R_n^{(k)}}
\end{equation}
for every sufficiently large $n$. With Corollary \ref{k402}, we obtain an improvement of the lower bound in \eqref{447}.
\end{rema}

\noindent
We end this section by finding an upper bound for $kn\log(R_n^{(k)})/(k-1) - R_n^{(k)}$.

\begin{prop} \label{p403}
Let $\e > 0$ and
\begin{displaymath}
X_{17} = X_{17}(k,b_1,\e) = \max \{ X_0, 5393k, e^{b_1/\e}, e^{b_1/\log k}, X_2(k,1,1,1,0,b_1) \},
\end{displaymath}
where $X_2$ is defined by \eqref{X2}. If
\begin{displaymath}
n \geq \frac{k-1}{k} (\pi(X_{17})+1),
\end{displaymath}
then
\begin{displaymath}
\frac{kn}{k-1} \log R_n^{(k)} - R_n^{(k)} < \left( 1 - \frac{\log k - \e k}{k-1} \right) \frac{kn}{k-1}.
\end{displaymath}
\end{prop}

\begin{proof}
Using \eqref{441}, we obtain
\begin{displaymath}
\pi(x) - \pi \left( \frac{x}{k} \right) <  \frac{(k-1)x}{k(\log x - 1 - b_1/\log x)} - \frac{x}{k(\log x - 1 - b_1/\log x)} \cdot \frac{\log k -
b_1/\log x}{\log x/k - 1}
\end{displaymath}
for every $x \geq \max \{ 5393k, X_0\}$. Since $\log x - 1 - b_1/\log x \geq \log(x/k) - 1$ for every $x \geq e^{b_1/\log k}$, we get
\begin{equation} \label{448}
\pi(x) - \pi \left( \frac{x}{k} \right) < \frac{(k-1)x}{k(\log x - 1 - b_1/\log x)} - \frac{x(\log k - b_1/\log x)}{k(\log x - 1 - b_1/\log x)^2}
\end{equation}
for every $x \geq X_{17}$. Since $n \geq (k-1)(\pi(X_{17})+1)/k$, we have $R_n^{(k)} > p_{ \lceil kn/(k-1) \rceil } \geq X_{17}$. So, we set $x = R_n^{(k)}$
in \eqref{448} and, using Proposition \ref{p207}, get
\begin{displaymath}
R_n^{(k)} - \frac{kn}{k-1} \log R_n^{(k)} > \frac{\log k - b_1/ \log R_n^{(k)}}{k-1} \cdot \frac{R_n^{(k)}}{\log R_n^{(k)} - 1 - b_1/\log R_n^{(k)}} -
\frac{kn}{k-1} - \frac{kn}{k-1} \cdot \frac{b_1}{\log R_n^{(k)}}.
\end{displaymath}
Since $R_n^{(k)} > p_{\lceil nk/(k-1) \rceil} \geq \max \{ X_0, e^{b_1/\log k} \}$, we obtain
\begin{displaymath}
R_n^{(k)} - \frac{kn}{k-1} \log R_n^{(k)} > \frac{\log k - b_1/ \log R_n^{(k)}}{k-1} \cdot \pi(R_n^{(k)}) - \frac{kn}{k-1} - \frac{kn}{k-1} \cdot
\frac{b_1}{\log R_n^{(k)}}.
\end{displaymath}
We have $\pi(R_n^{(k)}) \geq \pi(p_{\lceil nk/(k-1) \rceil}) \geq nk/(k-1)$. Therefore,
\begin{displaymath}
R_n^{(k)} - \frac{kn}{k-1} \log R_n^{(k)} > \frac{\log k - b_1/ \log R_n^{(k)}}{k-1} \cdot \frac{kn}{k-1} - \frac{kn}{k-1} - \frac{kn}{k-1} \cdot
\frac{b_1}{\log R_n^{(k)}}.
\end{displaymath}
It remains to notice that $b_1/\log R_n^{(k)} < \e$.
\end{proof}

\subsection{An upper bound for $R_n^{(k)} - p_{\lceil nk/(k-1) \rceil}$}

Now, we find an upper bound for \eqref{438}, which improves the upper bound in \eqref{106}. We define
\begin{displaymath}
X_{18} = X_{18}(k,b_1,\e_1) = X_9(k,0,b_1,5393,X_0,\e_1,0,X_{14}),
\end{displaymath}
where $X_9$ is defined by \eqref{X9} and $X_{14}$ is given by \eqref{X14}. Let $\e_3 > 0$ and let $X_{19} = X_{19}(\e_3)$ be such that
\begin{equation} \label{X19}
\log \log x < \e_3 \log x
\end{equation}
for every $x \geq X_{19}$. By setting
\begin{equation} \label{X20}
X_{20} = X_{20}(k,b_1,X_0,\e_1,\e_2,\e_3,\delta_1,\delta_2) = \max \left \{ \frac{k-1}{k} X_8, X_{16}, \frac{k-1}{k}(\pi(X_{18})+1), \frac{k-1}{k}
X_{19} \right \},
\end{equation}
where $X_8, X_{16}= X_{16}(k,b_1,X_0,\e_2,\delta_1,\delta_2)$ are given by \eqref{X8}, \eqref{X16}, respectively, and
\begin{equation} \label{gamma}
\gamma = \gamma(k,\e_1,\e_2,\e_3,\delta_1, \delta_2) = \left(\frac{(1+\e_2)(1+\delta_1)(\log k + \delta_2)}{k-1} + \log( (1+\e_1)(1+\e_3)) \right)
\frac{k}{k-1},
\end{equation}
we obtain the following result.

\begin{thm} \label{t404}
If $n \geq n_2 = X_{20}$, then
\begin{displaymath}
R_n^{(k)} - p_{\lceil nk/(k-1) \rceil} < \gamma n.
\end{displaymath}
\end{thm}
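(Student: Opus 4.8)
The plan is to bound $R_n^{(k)} - p_{\lceil nk/(k-1)\rceil}$ by splitting it across the quantity $\frac{kn}{k-1}\log R_n^{(k)}$, which we already control from both sides via Proposition \ref{p401} and Proposition \ref{p403}. First I would observe that the difference \eqref{426} naturally decomposes: write $t = \lceil nk/(k-1)\rceil$ and compare both $R_n^{(k)}$ and $p_t$ against the common reference value $\frac{kn}{k-1}\log R_n^{(k)}$ (or against $t\log p_t$). The point is that $R_n^{(k)} - p_t$ equals $\bigl(R_n^{(k)} - \tfrac{kn}{k-1}\log R_n^{(k)}\bigr) + \bigl(\tfrac{kn}{k-1}\log R_n^{(k)} - p_t\bigr)$, and each bracket is estimated separately.

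For the first bracket, Proposition \ref{p403} gives (after rearranging) an upper bound for $R_n^{(k)} - \frac{kn}{k-1}\log R_n^{(k)}$, valid once $n \geq \frac{k-1}{k}(\pi(X_{23})+1)$; equivalently Proposition \ref{p401} controls the opposite sign. I would use the side that contributes the $(1+\e_2)(1+\delta_1)(\log k + \delta_2)/(k-1)$ term appearing in $\gamma$, so that this is precisely the first summand inside the parentheses of \eqref{gamma}. For the second bracket, I would estimate $\frac{kn}{k-1}\log R_n^{(k)} - p_t$ by controlling $\log R_n^{(k)}$ in terms of $\log p_t$. Here Theorem \ref{t321} (with the parameters $\e_1$, $\e_2=0$ packaged into $X_{20}$) yields $R_n^{(k)} \leq (1+\e_1)p_t$, so $\log R_n^{(k)} \leq \log(1+\e_1) + \log p_t$; combined with the prime-counting estimate $p_t \geq t(\log p_t - 1 - \cdots)$ and the bound $\pi(R_n^{(k)}) \leq \frac{(1+\e_2)kn}{k-1}+1$, this turns the logarithmic term into a bound of the shape $\log((1+\e_1)(1+\e_3))\cdot\frac{k}{k-1}\,n$, which is exactly the second summand of $\gamma$. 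The auxiliary quantity $X_{21}$, enforcing $\log\log x < \e_3\log x$, is what lets me absorb a residual $\log\log R_n^{(k)}$ factor into the $\e_3$ term—this is the mechanism that improves the $n\log\log n$ bound of \eqref{105} down to a clean $\gamma n$.

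I would then collect the two brackets: the first contributes the $(1+\e_2)(1+\delta_1)(\log k+\delta_2)/(k-1)$ piece and the second the $\log((1+\e_1)(1+\e_3))$ piece, both multiplied by $\frac{k}{k-1}n$, giving exactly $\gamma n$ as defined in \eqref{gamma}. The threshold $n_2 = X_{22}$ is simply the maximum of the individual thresholds needed for each estimate to be valid: $X_{19}$ for Proposition \ref{p401}, $\frac{k-1}{k}(\pi(X_{20})+1)$ for the Theorem \ref{t321} application, $\frac{k-1}{k}X_{21}$ for the $\log\log$ absorption, and $\frac{k-1}{k}X_{11}$ for the lower bound $p_t \geq t(\log p_t - 1 - b_1/\log p_t)$.

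The main obstacle I expect is the careful bookkeeping in the second bracket, where one must pass between $R_n^{(k)}$ and $p_t$ inside a logarithm while keeping the error genuinely linear in $n$: naively, replacing $\log R_n^{(k)}$ by $\log p_t$ leaves a term proportional to $\frac{kn}{k-1}\log\log R_n^{(k)}$, and it is only through the $\e_3$-device ($X_{21}$) that this collapses to a constant multiple of $n$. Ensuring the constants line up so that the final coefficient is \emph{exactly} the $\gamma$ of \eqref{gamma}—rather than merely $O(n)$—is the delicate part, but all the required inputs (Propositions \ref{p401} and \ref{p403}, Theorem \ref{t321}, and the definition of $X_{21}$) are already in place.
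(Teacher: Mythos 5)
Your plan is correct and is essentially the paper's own proof: the same decomposition of $R_n^{(k)} - p_{\lceil nk/(k-1)\rceil}$ across $\tfrac{kn}{k-1}\log R_n^{(k)}$, with Proposition \ref{p401} controlling the first bracket, Theorem \ref{t321} (with $\e_2 = 0$) plus the $\log\log$-absorption via $X_{21}$ controlling the second, and the same threshold bookkeeping producing $X_{22}$. Two details to fix when writing it up: it is Proposition \ref{p401} (not \ref{p403}) that, after rearranging, upper-bounds $R_n^{(k)} - \tfrac{kn}{k-1}\log R_n^{(k)}$, and in the second bracket the paper pairs the Rosser--Schoenfeld upper bound \eqref{436} with Dusart's lower bound $p_t \geq t(\log t + \log\log t - 1)$ (rather than the $X_{11}$-type inequality), so that the resulting $+1$ cancels the $-1$ from Proposition \ref{p401} and the constant comes out to exactly $\gamma$.
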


\begin{proof}
By Theorem \ref{t321}, we get the inequality
\begin{equation} \label{452}
R_n^{(k)} \leq (1+\e_1)p_{\lceil nk/(k-1) \rceil},
\end{equation}
since $n \geq (k-1) \max \{\pi(X_{18})+1,X_8 \}$. By \cite{rs}, we have
\begin{equation} \label{453}
p_n \leq n(\log n + \log \log n)
\end{equation}
for every $n \geq 6$. Hence, using \eqref{452},
\begin{displaymath}
\frac{kn}{k-1} \log R_n^{(k)} \leq \frac{kn}{k-1} \log(1+\e_1) + \frac{kn}{k-1} \log \left\lceil \frac{kn}{k-1} \right\rceil + \frac{kn}{k-1} \log \left(
\log \left\lceil \frac{kn}{k-1} \right\rceil + \log \log \left\lceil \frac{kn}{k-1} \right\rceil \right).
\end{displaymath}
Since $nk/(k-1)\geq X_{19}$, we get $\log \log \lceil nk/(k-1) \rceil \leq \e_3 \log \lceil nk/(k-1) \rceil$ and therefore
\begin{displaymath}
\frac{kn}{k-1} \log R_n^{(k)} \leq \frac{kn}{k-1} \log((1+\e_1)(1+\e_3)) + \frac{kn}{k-1} \log \left\lceil \frac{kn}{k-1} \right\rceil + \frac{kn}{k-1}
\log \log \left\lceil \frac{kn}{k-1} \right\rceil.
\end{displaymath}
Using the estimate for $p_n$ proved by Dusart \cite{pd3}, we get
\begin{equation} \label{454}
\frac{kn}{k-1} \log R_n^{(k)} - p_{\lceil nk/(k-1) \rceil} \leq \frac{kn}{k-1} \left( \log ((1+\e_1)(1+\e_3)) + 1 \right).
\end{equation}
By Proposition \ref{p401}, we obtain
\begin{displaymath}
R_n^{(k)} - p_{\lceil nk/(k-1) \rceil} < \left( \frac{(1+\e_2)(1+\delta_1)(\log k + \delta_2)}{k-1} - 1 \right) \frac{kn}{k-1} + \frac{kn}{k-1} \log
R_n^{(k)} - p_{\lceil nk/(k-1) \rceil}.
\end{displaymath}
Now use \eqref{454}.
\end{proof}

\begin{kor} \label{k404}
The sequence $((R_n^{(k)} - p_{\lceil nk/(k-1) \rceil})/n)_{n \in \N}$ is bounded.
\end{kor}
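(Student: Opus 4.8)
The plan is to establish two-sided bounds on the sequence $a_n = (R_n^{(k)} - p_{\lceil nk/(k-1) \rceil})/n$, treating large $n$ via the theorems already proved and handling the finitely many small indices separately. The upper bound is immediate: by Theorem \ref{t404}, for every $n \geq n_2 = X_{22}$ we have $R_n^{(k)} - p_{\lceil nk/(k-1) \rceil} < \gamma n$, where $\gamma$ is the explicit constant from \eqref{gamma}. Dividing by $n$ gives $a_n < \gamma$ for all such $n$.

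For the lower bound I would invoke the lower estimate for the $n$th $k$-Ramanujan prime. By Proposition \ref{p305} (equivalently, Theorem \ref{t301} with $t=0$), there is a threshold $M_0 = \frac{k-1}{k}(\pi(X_3)+1)$ such that $R_n^{(k)} > p_{\lceil nk/(k-1) \rceil}$ for every $n \geq M_0$. Since both $R_n^{(k)}$ and $p_{\lceil nk/(k-1) \rceil}$ are positive integers, the difference is at least $1$, whence $a_n > 0$ for every $n \geq M_0$. Thus for $n \geq \max\{n_2, M_0\}$ both inequalities hold simultaneously and $0 < a_n < \gamma$.

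It remains to deal with the indices $n < \max\{n_2, M_0\}$. There are only finitely many of these, and for each of them $a_n$ is a well-defined finite real number, so the finite set $\{a_n : n < \max\{n_2, M_0\}\}$ is bounded. Combining this with the bound $0 < a_n < \gamma$ valid for all large $n$, I conclude that the entire sequence $(a_n)_n$ is bounded, which is the assertion.

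There is no genuine obstacle here: the corollary is a direct packaging of Theorem \ref{t404} (for the upper bound) together with the positivity guaranteed by Proposition \ref{p305} (for the lower bound), plus the trivial observation that finitely many terms are automatically bounded. The only point requiring mild care is to use a single threshold $\max\{n_2, M_0\}$ beyond which both the upper and lower estimates are simultaneously available, so that the small-index contribution is genuinely finite.
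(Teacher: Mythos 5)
Your proposal is correct and matches the paper's proof exactly: the paper also derives the corollary from Theorem \ref{t404} (upper bound) together with Proposition \ref{p305} (lower bound via eventual positivity of the difference), with the finitely many small indices being trivially bounded. You have simply spelled out the details the paper leaves implicit.
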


\begin{proof}
Follows from Proposition \ref{p305} and Theorem \ref{t404}.
\end{proof}

\begin{rema}
In particular, Corollary \ref{k404} gives a positive answer to the question raised by Sondow \cite{so2} in 2013, whether the sequence $((R_n -
p_{2n})/n)_{n \in \N}$ is bounded.
\end{rema}

\section{On the number of $k$-Ramanujan primes $\leq x$}

Let $\pi_k(x)$ be the number of $k$-Ramanujan primes less than or equal to $x$. Amersi, Beckwith, Miller, Ronan and Sondow \cite{abmrs} proved that
\begin{displaymath}
\frac{\pi_{k}(x)}{\pi(x)} \sim \frac{k-1}{k} \q\q (x \to \infty)
\end{displaymath}
by showing that there exists a positive constant $\beta_2 = \beta_2(k)$ such that for every sufficiently large $n$,
\begin{equation} \label{555}
|\rho_k(n)| \leq \frac{\beta_2 \log \log n}{\log n},
\end{equation}
where
\begin{equation} \label{rhokx}
\rho_k(x) = \frac{k-1}{k} - \frac{\pi_k(x)}{\pi(x)}.
\end{equation}
Now we improve the lower bound in \eqref{555}.

\begin{prop} \label{p501}
If $x \geq R_{N(k)}^{(k)}$, where $N(k)$ is defined by \eqref{Nk05}, then
\begin{displaymath}
\rho_k(x) > 0.
\end{displaymath}
\end{prop}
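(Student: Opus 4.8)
The plan is to translate the claim $\rho_k(x) > 0$ into the arithmetic inequality $\pi_k(x) < \frac{k-1}{k}\,\pi(x)$ and then exploit the defining property of $N(k)$. First I would set $m = \pi_k(x)$. Since the $k$-Ramanujan primes form the strictly increasing sequence $R_1^{(k)} < R_2^{(k)} < \cdots$ by Proposition \ref{p201}(iii), the quantity $\pi_k(x)$ equals exactly the largest index $m$ with $R_m^{(k)} \leq x$; in particular $R_m^{(k)} \leq x$.

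The key observation is that the hypothesis $x \geq R_{N(k)}^{(k)}$ forces $m$ to be large. Indeed, all of $R_1^{(k)}, \ldots, R_{N(k)}^{(k)}$ are at most $R_{N(k)}^{(k)} \leq x$, so $m = \pi_k(x) \geq N(k)$. This is precisely the threshold at which the definition of $N(k)$ becomes applicable, yielding $R_m^{(k)} > p_{\lceil km/(k-1) \rceil}$.

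Next I would combine $R_m^{(k)} \leq x$ with this lower bound. Since $R_m^{(k)}$ is a prime strictly exceeding $p_{\lceil km/(k-1) \rceil}$, it must be at least the next prime, so $x \geq R_m^{(k)} \geq p_{\lceil km/(k-1) \rceil + 1}$. Applying $\pi$ then gives $\pi(x) \geq \lceil km/(k-1) \rceil + 1 > km/(k-1)$. Rearranging produces $m/\pi(x) < (k-1)/k$, that is $\rho_k(x) = \frac{k-1}{k} - \frac{\pi_k(x)}{\pi(x)} > 0$, as required.

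Since the argument is just a short chain of definitional facts, there is no analytic obstacle here; the only point requiring care is the strictness of the final inequality. I would secure it not from $\lceil km/(k-1) \rceil$ alone (which can equal $km/(k-1)$ when that value happens to be an integer, and would then give only a non-strict bound), but from the primality of $R_m^{(k)}$, which upgrades $R_m^{(k)} > p_{\lceil km/(k-1) \rceil}$ to $R_m^{(k)} \geq p_{\lceil km/(k-1) \rceil + 1}$ and thereby contributes the decisive extra unit to $\pi(x)$.
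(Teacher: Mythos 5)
Your proof is correct and is essentially the paper's own argument: the paper likewise picks the index $n$ with $R_n^{(k)} \leq x < R_{n+1}^{(k)}$ (so $\pi_k(x)=n \geq N(k)$), invokes the definition of $N(k)$ to get $R_n^{(k)} > p_{\lceil kn/(k-1)\rceil}$, and concludes $\pi(x) > kn/(k-1)$. If anything, you are more explicit than the paper on the one delicate point, namely that primality of $R_n^{(k)}$ upgrades the bound to $\pi(x) \geq \lceil kn/(k-1)\rceil + 1$, which guarantees the strict inequality even when $kn/(k-1)$ is an integer.
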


\begin{proof}
Let $n \geq N(k)$ be such that $R_n^{(k)} \leq x < R_{n+1}^{(k)}$. Hence, $R_n^{(k)} > p_{\lceil nk/(k-1) \rceil}$, and we get $\pi(x) > nk/(k-1)$. Since
$\pi_k(x) = n$, our proposition is proved.
\end{proof}

\noindent
With the same method as in \cite{abmrs}, we improve the upper bound of \eqref{555} by using the following lemma.

\begin{lem} \label{l502}
If $s \geq 0$ and
\begin{equation} \label{c0s}
c_0(s) = \max \{ 4, 4s, (\pi(2+s)-1)\log 2 \},
\end{equation}
then for every $n \in \N$,
\begin{displaymath}
\pi(p_n + s n) - \pi(p_n) \leq \frac{c_0 n}{ \log p_n}.
\end{displaymath}
\end{lem}

\begin{proof}
The claim obviously holds for $s=0$. So let $s> 0$. If $n = 1$, we get
\begin{displaymath}
\pi(p_n + sn) - \pi(p_n) = \pi(2+s) - 1 \leq \frac{c}{\log 2}.
\end{displaymath}
Let $n \geq 2$. If $n < 3/s$, we obtain, by using the inequality $p_n \leq n^2$, which holds for every $n \geq 2$,
\begin{displaymath}
\pi(p_n + sn) - \pi(p_n) \leq 1 \leq \frac{4n}{2\log n} \leq \frac{cn}{\log p_n}.
\end{displaymath}
So let $n \geq \max\{2, 3/s\}$. Montgomery and Vaughan \cite{mv} proved that
\begin{displaymath}
\pi(M+N) - \pi(M) \leq \frac{2N}{\log N}
\end{displaymath}
for every $M,N \in \N$ with $N \geq 2$. By setting $M = p_n$ and $N = \lfloor sn \rfloor$, we get
\begin{equation} \label{558}
\pi(p_n + sn) - \pi(p_n) = \pi(p_n + \lfloor sn \rfloor) - \pi(p_n) \leq \frac{2\lfloor sn \rfloor}{\log \lfloor sn \rfloor} \leq \frac{2sn}{\log sn}.
\end{equation}
The last inequality in \eqref{558} holds, since $sn \geq 3$. If $s\geq 1$, we get, by using \eqref{558},
\begin{equation} \label{559}
\pi(p_n + sn) - \pi(p_n) \leq \frac{2sn}{\log n}.
\end{equation}
If $0 < s < 1$, then, using \eqref{558},
\begin{displaymath}
\pi(p_n + sn) - \pi(p_n) \leq \frac{2n}{\log n},
\end{displaymath}
since $3 \leq sn < n$. Combined with \eqref{559}, we obtain the inequality
\begin{displaymath}
\pi(p_n + sn) - \pi(p_n) < \frac{2\max \{ 1,s\}n}{\log n}
\end{displaymath}
for every $s > 0$. Again, using the inequality $p_n \leq n^2$, we get
\begin{displaymath}
\pi(p_n + sn) - \pi(p_n) < \frac{4\max\{1,s\}n}{\log p_n}.
\end{displaymath}
So the lemma is proved.
\end{proof}

\noindent
Now we obtain the following result, which leads to an improvement of the upper bound in \eqref{555}. Let $\e_4 > 0$ and $X_{21} = X_{21}(\e_3,\e_4)$ be such
that
\begin{displaymath}
\log(1+\e_3) + \log(x+1) + \log(x + \log(x+1)) \leq \e_4 x
\end{displaymath}
for every $x \geq X_{21}$. In addition, we define
\begin{displaymath}
c_1 = c_1(k,\e_1,\e_2,\e_3,\e_4,\delta_1,\delta_2) = 1+\e_4 + c_0(\gamma),
\end{displaymath}
where $\gamma$ and $c_0(\gamma)$ are defined by \eqref{gamma} and \eqref{c0s}, respectively, and take $c_2 \in \R$ with $c_2 > c_1$. Further, define
\begin{displaymath}
X_{22} = X_{22}(k,b_1,X_0,\e_1,\e_2,\e_3,\e_4,\delta_1,\delta_2) = \max \left \{ X_{20}, X_{21}, \log \frac{k}{k-1} \right \},
\end{displaymath}
where $X_{20}$ is defined by \eqref{X20}, and
\begin{displaymath}
X_{23} = X_{23}(k,b_1,X_0,\e_1,\e_2,\e_3,\e_4,\delta_1,\delta_2,c_2) = \max \left \{ X_{19}, \left \lceil \frac{kX_{22}}{k-1} \right \rceil + 1, \pi \left(
2^{c_1/(c_2-c_1)} \right) + 1 \right \},
\end{displaymath}
where $X_{19}$ is defined by \eqref{X19}.

\begin{thm} \label{t503}
If $x \geq n_3 = p_{X_{23}}$, then
\begin{displaymath}
\rho_k(x) \leq \frac{c_2}{\log x}.
\end{displaymath}
\end{thm}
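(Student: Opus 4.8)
The plan is to fix $x \geq p_{X_{26}}$, set $n = \pi_k(x)$, and use that the $k$-Ramanujan primes form a strictly increasing sequence (Proposition \ref{p201}(iii)), so that $R_n^{(k)} \leq x < R_{n+1}^{(k)}$. This gives at once $\pi(x) < \pi(R_{n+1}^{(k)})$ (since $R_{n+1}^{(k)}$ is prime and $x < R_{n+1}^{(k)}$) and $\log x \geq \log R_n^{(k)}$. Because $\rho_k(x) = (k-1)/k - n/\pi(x)$, the whole argument reduces to producing a good upper bound for $\pi(R_{n+1}^{(k)})$ in terms of $n$, which then lower-bounds $n/\pi(x)$. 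First I would check, from the choice of $X_{26}$ (in particular the term $\lceil \frac{k}{k-1}X_{25}\rceil + 1$ and the fact that $\pi(x) \geq X_{26}$), that $n = \pi_k(x)$ is large enough to apply all the results invoked below; concretely, $n$ is pushed past $X_{22}$, $X_{24}$, $X_{25}$ and the thresholds built into $\gamma$, $\e_3$ and $\e_4$, and $n \to \infty$ as $x \to \infty$.

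For the main estimate I would apply Theorem \ref{t404} to the index $n+1$. Writing $N = \lceil (n+1)k/(k-1) \rceil$, this yields $R_{n+1}^{(k)} < p_N + \gamma(n+1) \leq p_N + \gamma N$, where the last step uses $n+1 \leq N$. Now Lemma \ref{l502}, applied with $s = \gamma$ and index $N$, counts the primes in $(p_N, p_N + \gamma N]$ and gives
\begin{displaymath}
\pi(R_{n+1}^{(k)}) \leq \pi(p_N + \gamma N) \leq N + \frac{c_0(\gamma) N}{\log p_N}.
\end{displaymath}
Combined with $\pi(x) < \pi(R_{n+1}^{(k)})$, this is the key upper bound $\pi(x) < N(1 + c_0(\gamma)/\log p_N)$, and note that the appearance of $c_0(\gamma)$ here is exactly why $c_1 = 1 + \e_4 + c_0(\gamma)$ is built from $c_0(\gamma)$.

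It remains to convert this into the claimed bound. Using $N < (n+1)k/(k-1) + 1$ together with $1/(1+\delta) \geq 1-\delta$, I would split
\begin{displaymath}
\rho_k(x) < \frac{k-1}{k} - \frac{n}{N(1 + c_0(\gamma)/\log p_N)}
\end{displaymath}
into a leading term of size $\frac{k-1}{k}\, c_0(\gamma)/\log p_N$ and a remainder of order $1/n$ arising from the gap $(k-1)/k - n/N$. Since $n = \pi_k(x)$ is comparable to $\frac{k-1}{k}\pi(x)$, the $O(1/n)$ remainder is negligible against $1/\log p_N$ and is absorbed by the summand $1$ in $c_1$. Next I would pass from $\log p_N$ to $\log x$: from $x < R_{n+1}^{(k)} < p_N + \gamma N$ with $\gamma N$ of smaller order than $p_N$ one gets $x < 2p_N$, hence $\log p_N > \log x - \log 2$, which is precisely where the $\log 2$ in the final threshold originates; the residual $\log\log$-type discrepancies, controlled by the defining inequality of $X_{24}$, are absorbed by the summand $\e_4$. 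Collecting everything yields $\rho_k(x) \leq c_1/(\log x - \log 2)$, and finally $x \geq p_{X_{26}} \geq 2^{c_2/(c_2 - c_1)}$ gives $(c_2 - c_1)\log x \geq c_2 \log 2$, which is exactly $c_1/(\log x - \log 2) \leq c_2/\log x$.

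I expect the main obstacle to be the bookkeeping in the last paragraph: separating the single clean inequality $\pi(x) < N(1 + c_0(\gamma)/\log p_N)$ into its leading and error parts and verifying that the $O(1/n)$ remainder together with the $\log p_N$-versus-$\log x$ passage are genuinely dominated by the constants $1$ and $\e_4$ inside $c_1$, so that the comparison with $c_2$ through the threshold $2^{c_2/(c_2-c_1)}$ goes through. By contrast, the structural steps — localizing $x$ between consecutive $k$-Ramanujan primes and combining Theorem \ref{t404} with Lemma \ref{l502} — are routine.
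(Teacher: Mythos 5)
Your overall strategy (localize $x$ between consecutive $k$-Ramanujan primes, apply Theorem \ref{t404} at index $n+1$ and Lemma \ref{l502} at index $N$) is the natural dual of the paper's argument, but it has a genuine gap at its very first step, and the gap is not mere bookkeeping. You set $n = \pi_k(x)$ and need $n+1 \geq X_{22}$ (plus the other thresholds) in order to invoke Theorem \ref{t404} at index $n+1$; you assert this follows ``from the choice of $X_{26}$''. It does not. The hypothesis $x \geq p_{X_{26}}$ gives a lower bound on $\pi(x)$, namely $\pi(x) \geq X_{26} \geq \lceil \tfrac{k}{k-1}X_{25}\rceil + 1$, but what you need is a lower bound on $\pi_k(x)$, and converting the former into the latter is essentially the content of the theorem being proved. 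No crude conversion fits under the paper's constants: to get $\pi_k(p_{X_{26}}) \geq X_{22}-1$ you would need $R_{\lceil X_{22}\rceil - 1}^{(k)} \leq p_{X_{26}}$, and the available upper bounds, $R_j^{(k)} < p_{\lceil jk/(k-1)\rceil} + \gamma j$ (Theorem \ref{t404}) or $R_j^{(k)} \leq (1+\e_1)p_{\lceil (1+\e_2)jk/(k-1)\rceil}$ (Theorem \ref{t321}), overshoot: in the worst case $X_{25} = X_{22}$ one has $X_{26} = \lceil kX_{22}/(k-1)\rceil + 1$, so the comparison would require a prime gap $p_{X_{26}} - p_{X_{26}-1}$ of size at least $\gamma \lceil X_{22}\rceil$, which is linear in $X_{22}$ and cannot be guaranteed. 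Indeed the deficit $\tfrac{k-1}{k}\pi(x) - \pi_k(x) = \rho_k(x)\pi(x)$ may be as large as order $\pi(x)/\log x$ (that is exactly what this theorem quantifies), so the single unit of slack built into $X_{26}$ cannot absorb it. Your fallback remark that $n \to \infty$ as $x \to \infty$ only yields a qualitative version of the statement (valid for some unspecified sufficiently large threshold), not the theorem with the explicit threshold $n_3 = p_{X_{26}}$, and explicitness is the whole point of the construction.

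The paper's proof is structured precisely to avoid needing any a priori lower bound on $\pi_k$: it first reduces to $x = p_n$ (noting $\rho_k(x) = \rho_k(p_n)$ for $p_n \leq x < p_{n+1}$, since all $k$-Ramanujan primes are prime), then chooses the auxiliary index $m$ from $n = \pi(x)$ via $\lceil mk/(k-1)\rceil = n$, so that $m \geq X_{22}$ follows directly from $n \geq X_{26}$, and applies Theorem \ref{t404} at index $m$ to get $R_m^{(k)} < p_n + \gamma n$; the lower bound $\pi_k(p_n) \geq m - (\pi(p_n+\gamma n)-\pi(p_n))$ is then an \emph{output} of Theorem \ref{t404} combined with Lemma \ref{l502}, never an input. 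If you wish to keep your parametrization by $\pi_k(x)$, you must either enlarge the threshold (e.g.\ to $x \geq R_{\lceil X_{22}\rceil}^{(k)}$) or first prove a separate quantitative lower bound for $\pi_k$, which defeats the purpose. A second, smaller slip: you claim $p_{X_{26}} \geq 2^{c_2/(c_2-c_1)}$, but $X_{26} \geq \pi(2^{c_2/(c_2-c_1)})$ only gives that $p_{X_{26}}$ is at least the largest prime $\leq 2^{c_2/(c_2-c_1)}$, which can be strictly smaller. This one is repairable: you only need $p_N \geq 2^{c_1/(c_2-c_1)}$ to run the comparison $c_1(\log 2 + \log p_N) \leq c_2 \log p_N$, and that follows from Bertrand's postulate, which is how the paper handles the analogous passage from $\log p_n$ to $\log p_{n+1}$; but as written your final inequality $c_1/(\log x - \log 2) \leq c_2/\log x$ is not justified for all $x \geq p_{X_{26}}$.
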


\begin{proof}
First, we prove the claim for $x=p_n$ with $n \geq X_{23}$.
Let $m \in \N$ be such that $\lceil mk/(k-1)\rceil = n$. Then
\begin{displaymath}
\left \lceil \frac{km}{k-1} \right \rceil = n \geq X_{23} \geq \left \lceil \frac{kX_{22}}{k-1} \right \rceil + 1
\end{displaymath}
and it follows $m \geq X_{22} \geq X_{20}$. Hence by Theorem \ref{t404}, we have
\begin{displaymath}
R_m^{(k)} < p_{\lceil mk/(k-1) \rceil} + \gamma m = p_n + \gamma m.
\end{displaymath}
Since $m \leq n$ and $\gamma > 0$, we get $R_m^{(k)} < p_n + \gamma n$. It follows
\begin{displaymath}
\pi_k(p_n) \geq m - (\pi_k(p_n + \gamma n) - \pi_k(p_n)).
\end{displaymath}
Since every $k$-Ramanujan prime is prime, we obtain
\begin{displaymath}
\pi_k(p_n) \geq m - (\pi(p_n + \gamma n) - \pi(p_n))
\end{displaymath}
and using Lemma \ref{l502}, we get the inequality
\begin{equation} \label{560}
\frac{\pi_k(p_n)}{\pi(p_n)} = \frac{\pi_k(p_n)}{n} \geq \frac{m}{n} - \frac{c_0(\gamma)}{\log p_n}.
\end{equation}
Since
\begin{displaymath}
\frac{m}{n} = \frac{m}{\lceil\frac{mk}{k-1}\rceil} \geq \frac{m}{\frac{mk}{k-1} + 1} = \frac{k-1}{k} - \frac{(k-1)^2}{mk^2 + k(k-1)} \geq \frac{k-1}{k} -
\frac{1}{m},
\end{displaymath}
we get, using \eqref{560} and the definition \eqref{rhokx} of $\rho_k(x)$,
\begin{equation} \label{561}
\rho_k(p_n) \leq \frac{1}{m} + \frac{c_0(\gamma)}{\log p_n}.
\end{equation}
Using \eqref{453} as well as $X_{19} \leq n \leq (m+1)k/(k-1)$, we obtain
\begin{align*}
\log p_n & \leq \log n + \log( \log n + \log \log n) \\
& \leq \log (m+1) + \log \frac{k}{k-1} + \log \left( \log n + \e_3 \log n \right) \\
& \leq \log (m+1) + \log \frac{k}{k-1} + \log (1+\e_3) + \log \left( \log(m+1) + \log \frac{k}{k-1} \right).
\end{align*}
Since $m \geq X_{22} \geq \log(k/(k-1))$, it follows that
\begin{displaymath}
\log p_n \leq m + \log(1+\e_3) + \log(m+1) + \log(m + \log(m+1)).
\end{displaymath}
Using $m \geq X_{22} \geq X_{21}$, we get $\log p_n \leq (1+\e_4)m$ and using \eqref{561}, we obtain the inequality
\begin{equation} \label{562}
\rho_k(p_n) \leq \frac{1+\e_4}{\log p_n} + \frac{c_0(\gamma)}{\log p_n} = \frac{c_1}{\log p_n}.
\end{equation}
So the theorem is proved in the case $x= p_n$.

Now let $x \in \R$ with $x \geq p_{X_{23}}$ and let $n \geq X_{23}$ be such that $p_n \leq x < p_{n+1}$. Using \eqref{562}, we get
\begin{equation} \label{563}
\rho_k(x) = \rho_k(p_n) \leq \frac{c_1}{\log p_n}.
\end{equation}
Since $n \geq \pi(2^{c_1/(c_2-c_1)}) + 1$, we get $p_n \geq 2^{c_1/(c_2-c_1)}$, which is equivalent to
\begin{displaymath}
\frac{c_1}{\log p_n} \leq \frac{c_2}{\log 2p_n}.
\end{displaymath}
Using \eqref{563}, it follows that
\begin{displaymath}
\rho_k(x) \leq \frac{c_2}{\log 2p_n}.
\end{displaymath}
From Bertrand's postulate, it follows that $2p_n \geq p_{n+1}$ for every $n\in\N$. So,
\begin{displaymath}
\rho_k(x) \leq \frac{c_2}{\log 2p_n} \leq \frac{c_2}{\log p_{n+1}} \leq \frac{c_2}{\log x}
\end{displaymath}
and the theorem is proved in general.
\end{proof}

\section{On a conjecture of Mitra, Paul and Sarkar}

In 2009, Mitra, Paul and Sarkar \cite{mps} made the following

\begin{conj} \label{conj601}
If $m,n \in \N$ with $n \geq \lceil 1.1\log 2.5m \rceil$, then
\begin{displaymath}
\pi(mn) - \pi(n) \geq m-1.
\end{displaymath}
\end{conj}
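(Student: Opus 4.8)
The plan is to recast the conjecture in terms of $k$-Ramanujan primes and then control the relevant threshold via the upper bound of Theorem~\ref{t321}. The crucial observation is that, putting $x = mn$ and $k = m$, we have
\[
\pi(mn) - \pi(n) = \pi(x) - \pi(x/m),
\]
so the inequality $\pi(mn)-\pi(n) \geq m-1$ is exactly the assertion that $x$ has reached the threshold $R_{m-1}^{(m)}$. By the definition of $R_{m-1}^{(m)}$, the inequality $\pi(x)-\pi(x/m)\geq m-1$ holds for \emph{every} $x \geq R_{m-1}^{(m)}$; since $mn$ increases with $n$, it therefore suffices to treat the smallest admissible value of $x$. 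That is, the whole statement reduces to showing
\[
m\lceil 1.1\log(2.5m)\rceil \geq R_{m-1}^{(m)}
\]
for all sufficiently large $m$.

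To estimate the right-hand side I would apply Theorem~\ref{t321} with $k=m$, $n=m-1$, $\e_1 = 0$ and a fixed small $\e_2 \in (0,0.1)$. Here $kn/(k-1) = m(m-1)/(m-1) = m$, so the theorem gives $R_{m-1}^{(m)} \leq p_{\lceil(1+\e_2)m\rceil}$ as soon as $m-1 \geq n_1$. Feeding in the explicit bound $p_j \leq j(\log j + \log\log j)$ from \eqref{436} with $j = \lceil(1+\e_2)m\rceil$, one obtains
\[
R_{m-1}^{(m)} \leq (1+\e_2)\,m\,\log m\,(1+o(1)).
\]
On the other hand $m\lceil 1.1\log(2.5m)\rceil \geq 1.1\,m\log(2.5m) > 1.1\,m\log m$, and since $1+\e_2 < 1.1$ the lower-order terms are absorbed, yielding $R_{m-1}^{(m)} < m\lceil 1.1\log(2.5m)\rceil$ for all large $m$. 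The constant $1.1$ is precisely what supplies the multiplicative slack over the factor $1+\e_2 < 1.1$; the factor $2.5$ inside the logarithm only enlarges the left-hand side and is harmless.

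The hard part will be verifying the hypothesis $m-1 \geq n_1$ as $k=m\to\infty$, because $n_1 = \frac{k-1}{k(1+\e_2)}\max\{\pi(X_{12})+1,\,X_{11}\}$ depends on $k$ through the auxiliary constant $X_{12}$. I would check that each $k$-dependent quantity feeding into $X_{12}$ grows at most linearly in $k$: as $k\to\infty$ the expressions $S(k)$ and $X_{14}(k)$ converge to positive constants, so that $kX_0$, $kS(k)$ and $X_{10}=X_{13}(k)$ are all $O(k)$, whence $X_{12} = O(m)$ and $\pi(X_{12}) = O(m/\log m)$. Consequently $n_1 = o(m)$, so the condition $m-1 \geq n_1$ indeed holds for all sufficiently large $m$, completing the reduction and hence the proof.
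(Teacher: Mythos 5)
Your proposal is correct and follows essentially the same route as the paper: its Proposition \ref{p602} and Corollary \ref{k603} likewise prove the conjecture for all sufficiently large $m$ by applying Theorem \ref{t321} with $k=m$, $n=m-1$ (so that $kn/(k-1)=m$) to get $R_{m-1}^{(m)} \leq (1+\e)p_m$, then comparing with $m\lceil 1.1\log 2.5m\rceil$ via a Rosser--Schoenfeld bound on $p_m$, using the slack $1+\e < 1.1$. The only cosmetic differences are that you take $(\e_1,\e_2)=(0,\e_2)$ where the paper takes $(\e_1,\e_2)=(\e,0)$, and that your explicit verification that the constants feeding into $n_1$ grow at most linearly in $k$ (hence $n_1=o(m)$) spells out what the paper leaves implicit in requiring $m \geq \pi(X_{27}(m,b_1,\e))+1$.
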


\noindent
We prove this conjecture for every sufficiently large $m$ by using the following proposition.

\begin{prop} \label{p602}
Let $\e > 0$. Then
\begin{displaymath}
\pi(mx) - \pi(x) \geq m-1
\end{displaymath}
for every sufficiently large $m$ and every $x \geq (1+\e)p_m/m$.
\end{prop}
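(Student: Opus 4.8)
The plan is to view $\pi(mx) - \pi(x)$ as an instance of the $k$-Ramanujan counting function with $k = m$. Writing $y = mx$, we have $\pi(mx) - \pi(x) = \pi(y) - \pi(y/m)$, so by the very definition of $R_{m-1}^{(m)}$ the desired inequality $\pi(mx) - \pi(x) \geq m-1$ holds for every $y \geq R_{m-1}^{(m)}$, i.e.\ for every $x \geq R_{m-1}^{(m)}/m$. Since the hypothesis $x \geq (1+\e)p_m/m$ is equivalent to $mx \geq (1+\e)p_m$, it therefore suffices to prove
\begin{displaymath}
R_{m-1}^{(m)} \leq (1+\e)p_m
\end{displaymath}
for every sufficiently large $m$.

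To obtain this bound I would apply Theorem \ref{t321} with the choices $m_1 = m_2 = 1$, $a_1 = 1$, $b_1 = 1.17$ (as in Lemma \ref{l322} and Proposition \ref{p323}), and with $k = m$, $n = m-1$, $\e_1 = \e$, $\e_2 = 0$; note $\e_1 + \e_2 = \e \neq 0$. For these values $kn/(k-1) = m(m-1)/(m-1) = m$, so $\lceil (1+\e_2)kn/(k-1) \rceil = m$ and the conclusion of Theorem \ref{t321} reads exactly $R_{m-1}^{(m)} \leq (1+\e)p_m$. It remains only to verify the range condition $n = m-1 \geq n_1$. Because $\e_2 = 0$, formula \eqref{n_1} simplifies to $n_1 = \frac{m-1}{m}\max\{\pi(X_{12})+1,\, X_{11}\}$, so for $m \geq 2$ the condition $m-1 \geq n_1$ is equivalent to the clean inequality
\begin{displaymath}
m \geq \max\{\pi(X_{12})+1,\, X_{11}\}.
\end{displaymath}

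The main work, and the step I expect to be the real obstacle, is the asymptotic analysis of this last inequality as $k = m \to \infty$ while $a_1, b_1, X_0, Y_0$ stay fixed. The key observation is that the auxiliary quantities $S$ and $T$ of Theorem \ref{t321} and $X_{14}$ of Lemma \ref{l322} (hence $X_{10} = X_{13}$) all converge to finite limits as $k \to \infty$: each $k$-dependent term inside them is of the form $\log k/(k-1)$ or $1/(k-1)$ and so vanishes in the limit. Consequently the only parts of the defining maximum for $X_{12}$ that grow with $m$ are the linear factors $kX_0$, $kS$, $kX_{14}$ and $5.43k$, so that $X_{12} = O(m)$. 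By the Prime Number Theorem this yields $\pi(X_{12})+1 = O(m/\log m) = o(m)$, while $X_{11} \leq \pi(X_0)+1$ remains bounded; hence $\max\{\pi(X_{12})+1, X_{11}\} = o(m)$ is eventually at most $m$. This confirms $m-1 \geq n_1$ for all sufficiently large $m$, which together with the reduction of the first paragraph completes the proof. The only delicate point is to carry out the estimate $X_{12} = O(m)$ uniformly, tracking each of the $k$-dependent constants entering its definition; once that bound is secured, the sublinear growth of $\pi(X_{12})$ and hence the conclusion follow at once.
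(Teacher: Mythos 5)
Your proposal is correct and follows essentially the same route as the paper: both reduce the claim, via the definition of $R_{m-1}^{(m)}$, to the bound $R_{m-1}^{(m)} \leq (1+\e)p_m$ and obtain it from Theorem \ref{t321} with $k=m$, $n=m-1$, $\e_1=\e$, $\e_2=0$, noting that the threshold in \eqref{n_1} grows sublinearly in $m$ because the relevant $X$-constants are $O(m)$. The only (immaterial) difference is your parameter choice $a_1=1$, $b_1=1.17$ where the paper takes $A(x)=0$, $B(x)=b_1/\log x$ with $Y_0=5393$; indeed, you make explicit the $X_{12}=O(m)$, $\pi(X_{12})=o(m)$ bookkeeping that the paper leaves implicit in the phrase ``let $m$ be sufficiently large, so that $m \geq \pi(X_{27}(m,b_1,\e))+1$''.
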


\begin{proof}
We set  $A(x) = 0$, $B(x) = b_1/\log x$ and choose $Y_0 = 5393$ by \cite{pd}. By Theorem \ref{t321}, we get
\begin{equation} \label{664}
R_n^{(k)} \leq (1+\e) p_{\lceil kn/(k-1) \rceil}
\end{equation}
for every $n \in \N$ with $n \geq (k-1)\max\{\pi(X_{24})+ 1, X_8\}/k$, where $X_8$ is defined by \eqref{X8} and
\begin{displaymath}
X_{24} = X_{24}(k,b_1,X_0,\e) = X_9(k,0,b_1,5393,X_0,\e,0,X_{10}),
\end{displaymath}
where $X_9$ is given by \eqref{X9} and $X_{10}$ is defined by Lemma \ref{l322}. Now let $m \in \N$ be sufficiently large, so that
\begin{displaymath}
m \geq \max\{\pi(X_{24}(m,b_1,X_0,\e))+ 1, X_8\}.
\end{displaymath}
Then $m-1 \geq (m-1)\max\{\pi(X_{24}(m,b_1,X_0,\e))+ 1, X_8\}/m$ and, by \eqref{664}, we get the inequality $R_{m-1}^{(m)} \leq (1+\e) p_m$.
\end{proof}

\begin{kor} \label{k603}
The conjecture of Mitra, Paul and Sarkar holds for every sufficiently large $m$.
\end{kor}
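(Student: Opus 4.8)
The plan is to obtain Corollary \ref{k603} as an essentially immediate consequence of Proposition \ref{p602}, the only additional work being a comparison of the two hypotheses via the Prime Number Theorem. Proposition \ref{p602} guarantees, for each fixed $\e > 0$, that $\pi(mx) - \pi(x) \geq m-1$ holds for all sufficiently large $m$ and all $x \geq (1+\e)p_m/m$. The threshold in Conjecture \ref{conj601}, namely $n \geq \lceil 1.1 \log(2.5m) \rceil$, has right-hand side of order $1.1 \log m$, while $(1+\e)p_m/m$ is of order $(1+\e)\log m$. Since $\e$ is at our disposal, the idea is to choose it so small that the conjecture's threshold eventually exceeds the one required by Proposition \ref{p602}.

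Concretely, I would fix $\e$ with $0 < \e < 1/10$, say $\e = 1/20$. By the PNT we have $p_m \sim m \log m$, hence $(1+\e)p_m/m \sim (1+\e)\log m$ and $1.1 \log(2.5m) \sim 1.1 \log m$, so that
\begin{displaymath}
\frac{(1+\e)\,p_m/m}{1.1 \log(2.5m)} \longrightarrow \frac{1+\e}{1.1} < 1 \q\q (m \to \infty).
\end{displaymath}
Therefore there is an $m_1 \in \N$ with $(1+\e)p_m/m \leq 1.1 \log(2.5m) \leq \lceil 1.1 \log(2.5m) \rceil$ for every $m \geq m_1$. Let $m_2 \in \N$ be a bound beyond which the conclusion of Proposition \ref{p602} holds for this $\e$. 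Then for every $m \geq \max\{m_1, m_2\}$ and every integer $n \geq \lceil 1.1 \log(2.5m) \rceil$ we have $n \geq (1+\e)p_m/m$, so applying Proposition \ref{p602} with $x = n$ yields $\pi(mn) - \pi(n) \geq m-1$. This is precisely the assertion of Conjecture \ref{conj601} for all sufficiently large $m$.

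The genuine analytic difficulty has already been absorbed into Proposition \ref{p602}, which rests on Theorem \ref{t321} applied with $k = m$, $A(x) = 0$ and $B(x) = b_1/\log x$; the corollary itself is merely a repackaging together with the threshold comparison. The one point that requires care is the strict inequality $1 + \e < 1.1$: it cannot be relaxed to $\e = 1/10$, since the lower-order term of size $\log\log m$ in the expansion of $p_m/m$ would then overtake the vanishing gap $(1.1 - (1+\e))\log m$ and reverse the comparison. Choosing any $\e \in (0, 1/10)$ keeps this gap linear in $\log m$ and hence dominant, which is all that the argument needs.
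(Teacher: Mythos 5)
Your proof is correct and follows essentially the same route as the paper: fix $\e \in (0,0.1)$, show that $(1+\e)p_m/m \leq \lceil 1.1\log(2.5m) \rceil$ for all sufficiently large $m$, and then apply Proposition \ref{p602} with $x = n$. The only cosmetic difference is that the paper uses the explicit Rosser--Schoenfeld bound $p_m \leq m(\log m + \log\log m - 0.5)$ for $m \geq 20$ where you invoke the PNT asymptotic $p_m \sim m \log m$; since only a sufficiently-large-$m$ conclusion is needed, both comparisons are equally valid (and your closing remark about why $\e = 0.1$ itself would fail, due to the $\log\log m$ term, is accurate).
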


\begin{proof}
Let $0 < \e < 0.1$. By \cite{rs}, we have $p_m \leq m(\log m + \log \log m -0.5)$ for every $m\geq 20$. So, we get
\begin{displaymath}
(1+\e)p_m/m \leq (1+\e) (\log m + \log \log m - 0.5) \leq \lceil 1.1\log 2.5m \rceil
\end{displaymath}
for every sufficiently large $m$. It remains to apply Proposition \ref{p602}.
\end{proof}

\section{Acknowledgement}
I would like to thank Benjamin Klopsch for the helpful conversations. Also I would like to thank Elena Klimenko and Anitha Thillaisundaram for their careful
reading of the paper.

\vspace{5mm}

\textsc{Mathematisches Institut, Heinrich-Heine-Universität Düsseldorf, 40225 Düsseldorf}, \textsc{Germany}

\emph{E-mail address}: \texttt{axler@math.uni-duesseldorf.de}

\end{document}